\documentclass[12pt]{amsart} 
\usepackage{amscd} 
\usepackage{amsmath}
\usepackage{amssymb}
\usepackage{xypic}

 \frenchspacing

 \textheight=23cm
 \textwidth=13.5cm
 \hoffset=-1cm
 \parindent=16pt 

 \topmargin=-0.5cm

\DeclareMathOperator{\ad}{ad}
\DeclareMathOperator{\cof}{cofiber}

\DeclareMathOperator{\Free}{Free}
\DeclareMathOperator{\GL}{GL}

\DeclareMathOperator{\map}{map}

\DeclareMathOperator{\Sp}{Sp}

\def\Under#1#2{%
   \setbox0=\hbox{$#1$}%
   \setbox1=\hbox to \wd0{\hfil}%
   \dp1=\dp0%
   \hbox to 0pt{$#1$\hss}%
   \raise #2\dp0\hbox{\underline{\box1}}}

\newcommand{\epi}{\twoheadrightarrow}
\newcommand{\Acal}{{\mathcal{A}}}
\newcommand{\Ccal}{{\mathcal{C}}}
\newcommand{\Dcal}{{\mathcal D}}
\newcommand{\Ecal}{{\mathcal E}}

\newcommand{\Fcal}{{\mathcal{F}}}
\newcommand{\field}{{\mathbb{F}}}
\newcommand{\hobased}{{\tilde{h}}}
\newcommand{\hocolim}{\operatorname{hocolim}\,}
\newcommand{\colim}{\operatorname{colim}\,}
\DeclareMathAlphabet{\mathitbf}{OML}{cmm}{b}{it}
\newcommand{\kay}{{\mathitbf{k}}}

\newcommand{\kund}{ {\underline{k}} }
\newcommand{\Kcal}{{\mathcal{K}}}
\newcommand{\Lcal}{{\mathcal{L}}}

\newcommand{\Mbar}{{\overline{M}}}
\newcommand{\n}{ {\underline{n}} }

\newcommand{\nontrivial}{\rm{ntrv}}

\newcommand{\one}{ {\underline{1}} }
\newcommand{\Pcal}{{\mathcal{P}}}
\newcommand{\q}{ \Under{q}{.5} }
\newcommand{\Rcal}{{\mathcal{R}}}
\newcommand{\Sets}{{\mathcal{S}}}
\newcommand{\Sphere}{{\bf{S}}}

\newcommand{\BAR}{\operatorname{Bar}}
\newcommand{\EM}{\operatorname{H}}
\newcommand{\HZ}{\EM\integers}

\newcommand{\MacLane}{Mac\,Lane }


\newcommand{\complexes}{{\mathbb{C}}}
\newcommand{\integers}{{\mathbb{Z}}}
\newcommand{\naturals}{{\mathbb{N}}}
\newcommand{\reals}{{\mathbb{R}}}


\numberwithin{equation}{section}

\newtheorem{theorem}[equation]{Theorem}
\newtheorem{lemma}[equation]{Lemma}
\newtheorem{proposition}[equation]{Proposition}
\newtheorem{corollary}[equation]{Corollary}
\newtheorem{conjecture}[equation]{Conjecture}

\theoremstyle{definition}
\newtheorem{definition}[equation]{Definition}
\newtheorem{remark}[equation]{Remark}

\newtheorem{example}[equation]{Example}


\newtheorem*{NullMapProposition}
       {Proposition~\ref{prop: null map}}
\newcommand{\NullMapPropositionText}[1]{
  In #1, the map 
  $\Sigma^{-1}L(k)\rightarrow T(k)$ is null-homotopic. Thus
  $\Mbar(k)$ splits as a wedge sum $\Mbar(k)\simeq T(k)\vee L(k)$.
}

\newtheorem*{MbarProjectiveProposition}
       {Proposition~\ref{prop: Mbar projective}}
\newcommand{\MbarProjectivePropositionText}{
$\Mbar(k)$ is the stable wedge summand of
$\Sigma^{\infty}(B\Gamma_{k})_{+}$ at the prime $p$
corresponding to the symplectic Steinberg idempotent
in the group ring 
$\field_{p}\left[\Sp_{2k}\left(\field_{p}\right)\right]$. 
Thus $\Mbar(k)$ is a projective spectrum, in the sense of Kuhn. 
}

\newtheorem*{ConnectivityProposition}
       {Proposition~\ref{prop: Rognes connectivity conjecture}}
\newcommand{\ConnectivityPropositionText}{
  The subquotient spectrum $F_{m}bu/F_{m-1}bu$ of the stable rank filtration
  of Rognes is contractible unless
  $m=p^k$ for some prime $p$. If $m=p^k$, then the bottom nontrivial
  homotopy group of $F_{m}bu/F_{m-1}bu$ occurs in dimension $2m-2$.
}

\newtheorem*{ConnectionToMbar}
       {Corollary~\ref{cor: A-L vis Rognes, revisited}}
\newcommand{\ConnectionToMbarText}{
There is an equivalence
$\Mbar(k)\simeq \Rcal_{p^{k}}\kay\Ccal/\Rcal_{p^{k-1}}\kay\Ccal$. 
}

\newtheorem*{ConnectionTheorem}
       {Theorem~\ref{thm: pushout theorem}}
\newcommand{\ConnectionTheoremText}{
For every $m$, there is a stable homotopy pushout square
of augmented $\Gamma$-spaces
\[
\begin{CD}
\Rcal_{m}\Ccal @>>> \Ccal\\
@VVV @VVV\\
\Sp^{m}   @>>>   \Kcal_{m}\Ccal
\end{CD}
\]\\
and hence a homotopy pushout diagram of spectra
\[
\begin{CD}
\Rcal_{m}\kay\Ccal @>>> \kay\Ccal\\
@VVV @VVV\\
\Sp^{m}(\Sphere) @>>> A_{m}.
\end{CD}
\]
}

\newtheorem*{buWhiteheadConjecture}
        {Conjecture~\ref{conjecture: bu Whitehead reduced}}
\newcommand{\MbarComplex}{
\dots\longrightarrow \Mbar(2)\longrightarrow\Mbar(1)\longrightarrow\Mbar(0)
       \longrightarrow bu
}
\newcommand{\buWhiteheadConjectureTextIntro}{
The complex~\eqref{eq: Mbar complex exact}
is exact at the prime $p$. 
}
\newcommand{\buWhiteheadConjectureText}{
The complex
\[
\MbarComplex
\]
is exact at the prime $p$. 
}

\newtheorem*{propositionpushout}
    {Proposition~\ref{proposition: two pushouts}}

\newcommand{\propositionpushouttext}{
Suppose given a diagram of $\Gamma$-spaces
\begin{equation*}    
F_{1}\longleftarrow F_{0} \longrightarrow F_{2}
\end{equation*}
that is a diagram of augmented monoids, and suppose that
$F_{12}$ is the objectwise homotopy pushout. Then the natural map 
\[
F_{12}\longrightarrow \BAR(F_{1}, F_{0}, F_{2})
\]
is a strong augmented stable equivalence.}

\newtheorem*{propositionequivalentcats}
     {Proposition~\ref{proposition: equivalent cats}}
\newcommand{\propositionequivalentcatstext}
{
  Let $F\rightarrow G$ be an augmentation-preserving
  map of augmented very special $\Gamma$-spaces, and suppose that 
  $\Rcal_{i}[F(\one)]\rightarrow\Rcal_{i}[G(\one)]$
  is a homotopy equivalence for all $i<m$. Then 
$\Rcal_{i}[F(X)]\rightarrow\Rcal_{i}[G(X)]$ is a homotopy equivalence
for all $i<m$ and all $X$, and the commuting diagram
\begin{equation*}
\begin{CD}     
\Rcal_{m}[F(\one)]\wedge X @>>> \Rcal_{m} [F(X)]\\
@VVV @VVV\\
\Rcal_{m}[G(\one)]\wedge X @>>> \Rcal_{m} [G(X)]
\end{CD}
\end{equation*}
is a strong homotopy pushout diagram of augmented $\Gamma$-spaces,
that is, it remains a homotopy pushout square after the application of
$\Rcal_{j}$ for all $j$.
}

\newtheorem*{PropositionEquivalence}
     {Proposition~\ref{prop: equivalence of Rognes filtrations}}
\newcommand{\PropositionEquivalenceText}
{Let $\Ccal$ be the topological category of finite-dimensional 
complex vector spaces, let $\Rcal_{m}\kay\Ccal$ be the modified stable
rank filtration of $\kay\Ccal=bu$, and let
$F_{m}bu$ be the $m$-th stable rank filtration of Rognes as applied to 
complex topological $K$-theory. Then the canonical map of filtrations
$\Rcal_{m}\kay\Ccal\rightarrow F_{m}bu$ is a homotopy
equivalence of spectra for each $m$.  
}


\begin{document}
\baselineskip=17pt

\title[Augmented $\Gamma$-spaces] {Augmented $\Gamma$-spaces, the
  stable rank filtration, and a $bu$-analogue of the Whitehead
  conjecture}

\author[G. Z. Arone]{Gregory Z. Arone}
\address{Kerchof Hall, University of Virginia, P.O. Box 400137,
         Charlottesville VA 22904 USA}
\email{zga2m@virginia.edu}
\urladdr{http://www.math.virginia.edu/~zga2m}
\author[K. Lesh]{Kathryn Lesh}
\address{Department of Mathematics, Union College, Schenectady, NY 12309}
\email{leshk@union.edu}
\urladdr{http://www.math.union.edu/~leshk}

\date{}

\begin{abstract} 
  We explore connections between \cite{A-L}, where we constructed
  spectra that interpolate between $bu$ and $\HZ$, and earlier
  work of Kuhn and Priddy on the Whitehead conjecture and of Rognes on
  the stable rank filtration in algebraic $K$-theory. We construct a
  ``chain complex of spectra'' that is a $bu$-analogue of an auxiliary
  complex used by Kuhn-Priddy; we conjecture that this chain complex
  is ``exact''; and we give some supporting evidence. We tie this to
  work of Rognes by showing that our auxiliary complex can be
  constructed in terms of the stable rank filtration.  As a
  by-product, we verify for the case of topological complex $K$-theory a
  conjecture made by Rognes about the connectivity (for certain rings)
  of the filtration subquotients of the stable rank filtration of
  algebraic $K$-theory.
\end{abstract}

\subjclass[2000]{2010 MSC: Primary 55P48, 19L41 ; Secondary 55P91, 55R45}

\keywords{Whitehead Conjecture, Gamma spaces, calculus of functors, 
orthogonal calculus, rank filtration}

\maketitle

\section{Introduction}
\label{section: introduction}

In \cite{A-L}, we introduced a sequence of spectra $\{A_{m}\}$
interpolating between the connective complex $K$-theory spectrum $bu$
and the integral Eilenberg-\MacLane spectrum $\HZ$.  These new
spectra resulted from a general construction on permutative categories
endowed with an ``augmentation.''  In the current work, we explore
connections of that construction to other settings, in particular to
work of Kuhn and Priddy \cite{Kuhn-Priddy} on the Whitehead
Conjecture, and to work of Rognes \cite{Rognes} on the stable rank
filtration of algebraic $K$-theory.  Connections to Kuhn and Priddy's
work were suggested by the many properties that the spectra $A_{m}$
share with the symmetric powers of the sphere spectrum,
$\Sp^{m}(\Sphere)$, which can also be given as an example of the
categorical construction of \cite{A-L}.  This led us to call $A_{m}$
the ``$bu$-analogue'' of $\Sp^{m}(\Sphere)$ and to propose a
$bu$-analogue of the Whitehead Conjecture. On the other hand, our
construction was also reminiscent of the stable rank filtration in
algebraic $K$-theory, and this made it natural to ask the exact
relationship between the two filtrations for $bu$.  Curiously, the
two threads converged: in this paper we construct a $bu$-analogue of
an auxiliary complex introduced by Kuhn and Priddy in the course of
their proof of the Whitehead conjecture, and it turns out to be
closely related to the stable rank filtration for topological
complex $K$-theory.  As a by-product, we obtain a good understanding of
the stable rank filtration in the case of $bu$; in particular, we are
able to verify in this case the connectivity conjecture made by Rognes in 
the context of Euclidean domains or local rings
(\cite{Rognes}, Conjecture~12.3, though Rognes does not explicitly
mention topological rings as part of his context). 

To describe our results in detail, we need to recall the overall setup
of~\cite{A-L}. Let $\Ccal$ be a permutative category.  To such a
category, Segal's machine~\cite{Segal} associates a spectrum that we
will denote $\kay\Ccal$ in the general case. The simplest example is
$\naturals$, the permutative ``category'' with nonnegative integers as
objects, no nonidentity morphisms, and permutative structure given by
addition. In this case the associated spectrum is the integral
Eilenberg-\MacLane spectrum, which we denote as usual by $\HZ$,
rather than $\kay\naturals$.   A permutative category $\Ccal$ is called
augmented if there is a symmetric monoidal functor 
$\epsilon: \Ccal\rightarrow\naturals$ such that $\epsilon^{-1}(0)$ is
the trivial one-object category. An augmentation induces a map of
spectra $\kay\Ccal\rightarrow \HZ$. For example, if $\Sets$ is
the category of finite pointed sets, with augmentation given by
non-basepoint cardinality, then $\kay\Sets$ is the sphere spectrum
$\Sphere$, and the augmentation induces the Hurewicz map
$\Sphere\rightarrow \HZ$.  Similarly, the category of complex
vector spaces and unitary isomorphisms can be augmented by dimension,
and this augmentation induces a map $bu\rightarrow \HZ$, which
is the first structure map in the tower of connective covers of complex
$K$-theory.

Let $\Ccal$ be an augmented permutative category. The main
construction of~\cite{A-L} associates to $\Ccal$ a sequence of
permutative categories
\begin{equation}         \label{eq: sequence of categories}
\Ccal=\Kcal_0\Ccal\longrightarrow \Kcal_1\Ccal 
                  \longrightarrow \cdots 
                  \longrightarrow \Kcal_{m}\Ccal 
                  \longrightarrow \cdots 
                  \longrightarrow \Kcal_\infty\Ccal\simeq \naturals. 
\end{equation}
The associated spectra and maps refine the map 
$\kay\Ccal\longrightarrow \HZ$,
\begin{equation}   \label{eq: sequence of spectra}
\kay\Ccal \longrightarrow \kay\Kcal_{1}\Ccal 
          \longrightarrow \cdots 
          \longrightarrow \kay\Kcal_{m}\Ccal 
          \longrightarrow \cdots \longrightarrow \HZ.
\end{equation}
A key example is $\Ccal=\Sets$, which gives
$\kay\Kcal_{m}\Ccal\simeq\Sp^{m}(\Sphere)$ and recovers the classical
filtration of $\HZ\simeq\Sp^\infty(\Sphere)$ by the finite
symmetric powers of the sphere spectrum.
The other primary example in \cite{A-L} is the category of
finite-dimensional complex vector spaces and unitary isomorphisms,
augmented by dimension.  In keeping with the notation in \cite{A-L},
we use $A_{m}$ to denote the spectrum $\kay\Kcal_{m}\Ccal$ in the 
complex vector space case. 

Standard manipulations of cofiber sequences allow us to recast a
sequence such as \eqref{eq: sequence of spectra} as a ``chain
complex'' of spectra involving the successive cofibers, and one can
then ask whether this complex is ``exact.''  
(See Section~\ref{sec: construction of Mbar}.) This question at a
prime $p$ for the category of finite pointed sets gives the classical Whitehead
Conjecture, stated below, which was proved by Kuhn for $p=2$ and by
Kuhn and Priddy for odd primes.  When localized at the prime $p$, the
sequence $\Sp^{m}(\Sphere)$ only changes at powers of $p$, and we adopt
the usual notation
\[
L(k) =\Sigma^{-k}\Sp^{p^k}(\Sphere)/\Sp^{p^k-1}(\Sphere). 
\]
Note that $L(0)=S^{0}$, and that in the following theorem, the map
$L(0)\rightarrow \HZ$ is the Hurewicz map. 

\begin{theorem}[Whitehead Conjecture, \cite{Kuhn, Kuhn-Priddy}]
\label{theorem: classical Whitehead}
The complex
\begin{equation}  \label{eq: unreduced Whitehead}
\dots\longrightarrow L(2)\longrightarrow L(1)\longrightarrow L(0)
       \longrightarrow \HZ
\end{equation}
is exact at the prime $p$. 
\end{theorem}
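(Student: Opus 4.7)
The plan is to adapt the strategy developed by Kuhn for $p=2$ and by Kuhn--Priddy at odd primes, which compares the complex \eqref{eq: unreduced Whitehead} to a more tractable auxiliary complex that admits an explicit contracting homotopy. As a preliminary, I would work $p$-locally throughout, using the fact that $\Sp^{m}(\Sphere)/\Sp^{m-1}(\Sphere)$ is contractible at $p$ unless $m$ is a power of $p$, so that the only nontrivial stages of the symmetric-power filtration $p$-locally correspond to the $L(k)$ appearing in \eqref{eq: unreduced Whitehead}.

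Next I would construct an auxiliary complex $M(\bullet)$, where $M(k)$ is the stable wedge summand of $\Sigma^{\infty}(B\Sigma_{p^{k}})_{+}$ cut out by the classical Steinberg idempotent $e_{k} \in \field_{p}[\GL_{k}(\field_{p})]$ transported along the regular-representation embedding $\GL_{k}(\field_{p}) \hookrightarrow \Sigma_{p^{k}}$. The differential $M(k) \to M(k-1)$ would arise from the inclusion of wreath products $\Sigma_{p^{k-1}} \wr \Sigma_{p} \hookrightarrow \Sigma_{p^{k}}$ composed with the stable transfer and the folding map onto $\Sigma_{p^{k-1}}$, with signs chosen so that successive composites are null. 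By the Mitchell--Priddy stable splitting, $\Sigma^{-k} M(k) \simeq L(k)$ at $p$, and matching the differentials up to chain homotopy would identify $\Sigma^{-\bullet} M(\bullet)$ with \eqref{eq: unreduced Whitehead}. The content of this step is an equivariant stable splitting, comparing the summand picked out by $e_{k}$ in $(B\Sigma_{p^{k}})_{+}$ with the symmetric-power filtration quotient.

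The main obstacle is then to prove that $M(\bullet)$ is acyclic. The approach is to exhibit a chain contraction built from a second family of transfers, this time indexed by the flags in $\field_{p}^{k}$, i.e., by simplices of the Tits building of $\GL_{k}(\field_{p})$. At the level of $\field_{p}[\GL_{k}(\field_{p})]$-modules, this reduces to the Solomon--Tits theorem, which resolves the Steinberg module by permutation modules of standard parabolic subgroups. Translating that algebraic resolution into a statement about spectra and checking equivariance against the Steinberg idempotents on both sides is the technical heart of the argument. At odd primes, the interaction with the sign character and the orientation of the building forces the extra care that distinguishes Kuhn--Priddy from Kuhn's original $p=2$ argument; this step would occupy the bulk of the proof.
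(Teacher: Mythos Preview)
The paper does not supply its own proof of this theorem: it is cited from \cite{Kuhn, Kuhn-Priddy} and used as background. So there is no proof in the paper to compare your proposal against. That said, Section~\ref{sec: construction of Mbar} does summarize the Kuhn--Priddy strategy, and your sketch diverges from that summary in ways that matter.

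First, the relationship between $M(k)$ and $L(k)$ is not $\Sigma^{-k}M(k)\simeq L(k)$. As the paper records in Proposition~\ref{prop: null map classical} (from \cite{Mitchell-Priddy}), one has $M(k)\simeq L(k)\vee L(k-1)$; the connecting map $\Sigma^{-1}L(k-1)\to L(k)$ in \eqref{eq: symm power chain complex} is null, and $M(k)$ is the cofiber. Your identification would make the complexes $\{M(k)\}$ and $\{L(k)\}$ literally the same up to suspension, which they are not. Second, and relatedly, the auxiliary complex $\{M(k)\}$ augments to $\HZ/p$, not $\HZ$ (see \eqref{eq: mod p Whitehead} and the defining diagram \eqref{eq: symm power setup}); Kuhn--Priddy prove exactness of the mod~$p$ complex and then deduce exactness of \eqref{eq: unreduced Whitehead} from the splitting $M(k)\simeq L(k)\vee L(k-1)$. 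Your plan to ``identify $\Sigma^{-\bullet}M(\bullet)$ with \eqref{eq: unreduced Whitehead}'' short-circuits this step and would not go through as stated. Third, as Proposition~\ref{prop: M projective} notes, $M(k)$ is the Steinberg summand of $\Sigma^{\infty}(B\Delta_{k})_{+}$ with $\Delta_{k}\cong(\integers/p)^{k}$, rather than of $\Sigma^{\infty}(B\Sigma_{p^{k}})_{+}$; the Steinberg idempotent lives in $\field_{p}[\GL_{k}(\field_{p})]$, which is the Weyl group of $\Delta_{k}$ in $\Sigma_{p^{k}}$. Your description of the contracting homotopy via transfers and the Tits building is in the right spirit, but the surrounding identifications need to be corrected before that machinery can be applied.
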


Because \cite{A-L} established striking similarities between the
subquotients $A_{m}/A_{m-1}$ and $\Sp^{m}(\Sphere)/\Sp^{m-1}(\Sphere)$
(that is, between the subquotients of \eqref{eq: sequence of spectra}
for finite-dimensional complex vector spaces and
finite pointed sets, respectively), we were led to conjecture a
$bu$-analogue of the Whitehead Conjecture.  Again working at a prime $p$,
we let
\[
T(k)=\Sigma^{-(k+1)}A_{p^k}/A_{p^k-1}.
\]

\begin{conjecture}[$bu$ Whitehead Conjecture]
\label{conjecture: bu Whitehead}
The complex
\begin{equation}   \label{eq: bu whitehead complex}
\dots\longrightarrow T(2)\longrightarrow T(1)\longrightarrow T(0)
       \longrightarrow bu \longrightarrow \HZ
\end{equation}
is exact at the prime $p$. 
\end{conjecture}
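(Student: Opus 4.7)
The strategy is to deduce Conjecture~\ref{conjecture: bu Whitehead} from two inputs: the classical Whitehead conjecture (Theorem~\ref{theorem: classical Whitehead}) and exactness of the $\Mbar$-complex. The wedge decomposition $\Mbar(k)\simeq T(k)\vee L(k)$ from Proposition~\ref{prop: null map} allows one to read off the $T$-complex as a direct summand of the $\Mbar$-complex, provided the differentials respect the splitting in a strong enough sense. Once exactness of the $\Mbar$-complex is established, exactness of the $T$-complex follows by separating out the $L$-summand and invoking the classical result to eliminate it.

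The first concrete step is to verify this compatibility of differentials. Each map $\Mbar(k)\to\Mbar(k-1)$ decomposes, via the splitting, into four components; the diagonal two are the candidate differentials of the $T$- and $L$-complexes, and one must show the off-diagonal terms $L(k)\to T(k-1)$ and $T(k)\to L(k-1)$ are nullhomotopic. Here Proposition~\ref{prop: Mbar projective} offers leverage: $\Mbar(k)$ is cut out of $\Sigma^{\infty}_{+}B\Gamma_{k}$ by the symplectic Steinberg idempotent, whereas $L(k)$ is cut out of $\Sigma^{\infty}_{+}B\Sigma_{p^{k}}$ by the ordinary Steinberg idempotent, and the relevant off-diagonal stable maps are controlled by Hom groups between these two projective summands over the appropriate modular group rings. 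A connectivity count based on Proposition~\ref{prop: Rognes connectivity conjecture} should dispatch one cross term for free, and the remaining vanishing should reduce to a Bruhat-type comparison of the ordinary and symplectic Steinberg idempotents.

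Next I would attack exactness of the $\Mbar$-complex itself. Corollary~\ref{cor: A-L vis Rognes, revisited} identifies $\Mbar(k)$ with $\Rcal_{p^{k}}\kay\Ccal/\Rcal_{p^{k-1}}\kay\Ccal$, so the $\Mbar$-complex is the associated graded of the modified stable rank filtration of $bu$. In analogy with Kuhn--Priddy's treatment of $\Sp^{\bullet}(\Sphere)$, the plan is to build a chain contraction from transfers between these projective summands. This is where I expect the main obstacle: Kuhn and Priddy invoke the Kahn--Priddy theorem in the symmetric-group setting, and a genuine $bu$-analogue requires a corresponding surjectivity statement---essentially a unitary Becker--Gottlieb transfer landing in $bu$ with the correct image---for stable maps out of the symplectic Steinberg summands. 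Engineering this, and simultaneously verifying that the resulting contraction respects the Steinberg-isotypic decomposition demanded by the first step, is the crux of the problem and the place where current technology seems thinnest.
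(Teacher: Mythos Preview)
The statement you are attempting to prove is labeled a \emph{Conjecture} in the paper, and the paper does not supply a proof; it remains open. So there is no ``paper's own proof'' to compare against. What the paper does is introduce the auxiliary complex $\{\Mbar(k)\}$, formulate the Reduced $bu$ Whitehead Conjecture (Conjecture~\ref{conjecture: bu Whitehead reduced}) asserting its exactness, and give calculus-of-functors evidence for that reduced conjecture in Section~\ref{section: calculus evidence}. Neither Conjecture~\ref{conjecture: bu Whitehead} nor Conjecture~\ref{conjecture: bu Whitehead reduced} is proved.

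Your proposal is therefore not a proof but a reduction strategy, and you are candid about this: the central input you need---exactness of the $\Mbar$-complex---is precisely Conjecture~\ref{conjecture: bu Whitehead reduced}, which you yourself flag as ``the place where current technology seems thinnest.'' The paper agrees: it only offers heuristic evidence (a candidate contracting homotopy coming from connecting maps in a Weiss tower, Section~\ref{section: calculus evidence}), not a proof. So your main step is assuming an open conjecture.

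Even granting exactness of $\{\Mbar(k)\}$, your passage from that to exactness of $\{T(k)\}$ has a gap. You propose to split off the $L$-summand via Proposition~\ref{prop: null map} and kill it with the classical Whitehead conjecture. But the $L$-complex embedded inside the $\Mbar$-complex augments to $bu$ (via $\Mbar(0)\to bu$), not to $\HZ$; the classical Whitehead statement is about the $L$-complex over $\HZ$, so ``invoking the classical result to eliminate it'' does not apply directly. More structurally, the three rows of diagram~\eqref{eq: bu chain complexes} are related by columnwise cofiber sequences, and deducing exactness of the middle row from exactness of the outer two requires control at the infinite-loop-space level (the paper's notion of exactness involves sections of $\Omega^\infty q$, not merely acyclicity), which a wedge decomposition of objects does not automatically provide. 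Your sketch of why the off-diagonal components vanish is also speculative: neither the connectivity count nor the ``Bruhat-type comparison'' is substantiated, and the paper makes no such claims.
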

In considering the potential for adapting Kuhn and Priddy's methods to
prove Conjecture~\ref{conjecture: bu Whitehead}, we
see that Kuhn and Priddy did not actually work directly
with the complex $\{L(k)\}$ in the proof of 
Theorem~\ref{theorem: classical Whitehead}. Instead, they constructed an
auxiliary complex $\{M(k)\}$ defined by
$M(k)=\Sigma^{-k}D(k)/D(k-1)$, where $D(k)$ is the cofiber of the
$p$-fold diagonal map
$\Sp^{p^{k-1}}(\Sphere)\rightarrow\Sp^{p^{k}}(\Sphere)$. They
proved that 
\begin{equation} \label{eq: mod p Whitehead}
\dots\longrightarrow M(2)\longrightarrow M(1)\longrightarrow M (0)
       \longrightarrow \HZ/p
\end{equation}
is exact at the prime $p$
(the ``mod $p$ Whitehead Conjecture'') and then used this
to obtain Theorem~\ref{theorem: classical Whitehead}.

In this paper, we define spectra $\Mbar(k)$ that may play a similar
role in a $bu$ version of the Whitehead Conjecture to the role played
by $M(k)$ in the classical Whitehead Conjecture. The first part of the
paper constructs the spectra $\Mbar(k)$, studies their properties, and
gives evidence for the exactness of the complex $\{\Mbar(k)\}$
based on the calculus of functors. The second part of the paper sets
out a fairly general categorical construction related to the rank
filtration and explores some of its technical properties. The last
part of the paper shows that $\Mbar(k)$ bears a close relationship to
the categorical construction introduced in the second part,
as well as to the $k$-th filtration quotient of the stable rank filtration of
Rognes (if one were to apply the stable rank filtration to topological
complex $K$-theory). In the remainder of this introduction, we
summarize each of these segments and highlight the new results.

The spectra $\Mbar(k)$ are constructed in 
Section~\ref{sec: construction of Mbar}. We use the fact that 
the sequence~\eqref{eq: sequence of categories} is natural in augmented
permutative categories, and we consider the functor from finite pointed sets
to finite-dimensional complex vector spaces that takes a set $S$ with
basepoint $*$ to $\complexes(S)/\complexes(*)$. This functor respects
the augmentation and induces maps of spectra
$\Sp^{m}(\Sphere)\rightarrow A_{m}$ compatible with the inclusions
$\Sp^{m-1}(\Sphere)\rightarrow \Sp^{m}(\Sphere)$ and
$A_{m-1}\rightarrow A_{m}$. Further, the two sequences of spectra
share the property that $\Sp^{m-1}(\Sphere)\rightarrow
\Sp^{m}(\Sphere)$ and $A_{m-1}\rightarrow A_{m}$ are equivalences at a
prime $p$ unless $m$ is a power of $p$. We index logarithmically and
denote the cofiber of $\Sp^{p^{k}}(\Sphere)\rightarrow A_{p^{k}}$ by
$C(k)$, by analogy to Kuhn and Priddy's $D(k)$, and we define
\begin{equation}   \label{eq: defn Mbar(k)}
\Mbar(k) =\Sigma^{-(k+1)}C(k)/C(k-1),
\end{equation}
by analogy to Kuhn and Priddy's $M(k)$.  The standard manipulation
fits these spectra together into a chain complex,
\begin{equation}    \label{eq: Mbar complex exact}
\MbarComplex.
\end{equation}

The following conjecture is analogous to the exactness 
of~\eqref{eq: mod p Whitehead}.
\begin{buWhiteheadConjecture}[Reduced $bu$ Whitehead Conjecture]
\buWhiteheadConjectureTextIntro
\end{buWhiteheadConjecture}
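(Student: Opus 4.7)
My plan is to combine the splitting $\Mbar(k)\simeq T(k)\vee L(k)$ of Proposition~\ref{prop: null map} with the projectivity of $\Mbar(k)$ as the symplectic Steinberg summand of $\Sigma^{\infty}(B\Gamma_{k})_{+}$ established in Proposition~\ref{prop: Mbar projective}. Via the identification $\Mbar(k)\simeq \Rcal_{p^{k}}\kay\Ccal/\Rcal_{p^{k-1}}\kay\Ccal$ of Corollary~\ref{cor: A-L vis Rognes, revisited}, the boundary maps in~\eqref{eq: Mbar complex exact} become the connecting maps of the stable rank filtration of $bu$, so the conjecture is an $E^{1}$-exactness statement about this filtration at the prime $p$.

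I would then proceed in two parallel streams corresponding to the two summands of the splitting. On the $L(k)$-side, I would argue that the restricted differentials agree, up to the inclusion of the linear Steinberg idempotent inside the symplectic one, with the classical Whitehead differentials, reducing exactness there to the theorem of Kuhn and Kuhn--Priddy (Theorem~\ref{theorem: classical Whitehead}). On the $T(k)$-side, I would apply unitary calculus to an augmented functor on complex vector spaces whose Taylor tower reproduces the filtration $\Rcal_{p^{k}}\kay\Ccal$ of $bu$; exactness on this side then becomes a convergence statement for that tower, combined with the connectivity estimates of Proposition~\ref{prop: Rognes connectivity conjecture}. A final Bockstein argument would upgrade mod-$p$ information to exactness at $p$.

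The main obstacle will be verifying that the differentials in~\eqref{eq: Mbar complex exact} genuinely respect the splitting $\Mbar(k)\simeq T(k)\vee L(k)$. The splitting is produced by a noncanonical null-homotopy of $\Sigma^{-1}L(k)\to T(k)$, so a priori the composite $\Mbar(k+1)\to\Mbar(k)$ may mix the two summands. If strict compatibility fails, the fallback is a two-stage spectral-sequence argument whose $E^{1}$-page is $T(k)\oplus L(k)$, trading a wedge decomposition for an associated-graded decomposition. A secondary difficulty is the unitary-calculus convergence step on the $T$-side, where one must show that the derivatives of the chosen functor assemble correctly into the Rognes filtration and that the resulting tower actually converges to $bu$ at $p$; this would likely require delicate cross-effect or EHP-style estimates for functors on complex vector spaces that are not yet in the literature.
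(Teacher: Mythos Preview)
The statement you are attempting to prove is \emph{Conjecture}~\ref{conjecture: bu Whitehead reduced} in the paper; the paper does not prove it and does not claim to. There is therefore no ``paper's own proof'' to compare against. What the paper does offer is evidence and a suggested line of attack (Section~\ref{section: calculus evidence}): the $k$-th layer of the Weiss tower of $V\mapsto BU(V)$ evaluated at $\complexes^{0}$ is identified with $\Omega^{k-1}\Omega^{\infty}\Mbar(k)$, and the paper conjectures that deloopings of the connecting maps of that tower furnish a contracting homotopy for~\eqref{eq: Mbar complex exact}. This is a unified approach that treats $\Mbar(k)$ as a single object, not as a wedge.

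Your proposal is a genuinely different route, and you have already put your finger on its central gap. The splitting $\Mbar(k)\simeq T(k)\vee L(k)$ from Proposition~\ref{prop: null map} comes from a noncanonical null-homotopy, and there is no reason to expect the differentials of~\eqref{eq: Mbar complex exact} to respect it; your fallback ``two-stage spectral sequence'' does not obviously help, since controlling the off-diagonal piece $T(k+1)\to L(k)$ (or $L(k+1)\to T(k)$) is exactly the hard part. More seriously, even granting compatibility, your $T$-side step is circular as stated: an exactness statement for the $T(k)$-complex over the fiber of $bu\to\HZ$ is precisely Conjecture~\ref{conjecture: bu Whitehead}, which is also open in the paper. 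Invoking ``convergence of a Taylor tower reproducing $\Rcal_{p^{k}}\kay\Ccal$'' together with the connectivity of Proposition~\ref{prop: Rognes connectivity conjecture} does not give exactness in Kuhn's sense; connectivity of subquotients controls where the spectral sequence starts, not whether the cofiber sequences $E_{n+1}\to X_{n}\to E_{n}$ admit $\Omega^{\infty}$-sections, which is what exactness means here (Lemma~\ref{lemma: contracting homotopy}).

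In short: your proposal is a reasonable heuristic outline, but it presupposes the unreduced $bu$ Whitehead Conjecture on the $T$-side and an unproved compatibility on the $L$-side, so it does not constitute a proof. The paper's own stance is that this is an open problem, with the Weiss-tower contracting homotopy as the most promising candidate mechanism.
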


There are compelling similarities between the spectra $M(k)$ 
of~\eqref{eq: mod p Whitehead} and the spectra $\Mbar(k)$ 
of~\eqref{eq: Mbar complex exact}.  As a first example,
we compare the relationship of~\eqref{eq: unreduced Whitehead} 
and~\eqref{eq: mod p Whitehead} in the classical case with
the relationship of \eqref{eq: bu whitehead complex}
and \eqref{eq: Mbar complex exact} in the
$bu$ case. In the classical case, $M(k)$ is a cofiber of subquotients of
symmetric powers of the sphere spectrum, and it sits in a cofiber
sequence
\[
\Sigma^{-1}L(k-1)\rightarrow L(k)\rightarrow M(k).
\]
(See \eqref{eq: symm power chain complex}.)
It turns out that the first map is null-homotopic, and so $M(k)$
splits as a wedge sum, $M(k)\simeq L(k)\vee L(k-1)$
\cite{Mitchell-Priddy}. Similarly in the $bu$ situation, $\Mbar(k)$
is a cofiber of subquotients of symmetric powers of the sphere spectrum
and subquotients of the spectra $A_{p^{k}}$ filtering $bu$,
and there is a cofiber sequence
\begin{equation}    \label{eq: cofiber in bu}
\Sigma^{-1}L(k)\rightarrow T(k)\rightarrow \Mbar(k).
\end{equation}
(See \eqref{eq: bu chain complexes}.)
We prove the following proposition.

\begin{NullMapProposition} 
  \NullMapPropositionText{\eqref{eq: cofiber in bu}}
\end{NullMapProposition}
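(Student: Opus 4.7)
The plan is to interpret the cofiber sequence $\Sigma^{-1}L(k)\rightarrow T(k)\rightarrow \Mbar(k)$ as coming from a comparison between the symmetric-power filtration of $\HZ$ and the $A_m$ filtration of $bu$, and then produce the null-homotopy by a cohomological or idempotent argument.

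First, I would set up the relevant $3\times 3$ diagram. The linearization functor $S\mapsto \complexes(S)/\complexes(\ast)$ induces a map of filtrations $\Sp^{m}(\Sphere)\rightarrow A_{m}$, and by stacking horizontal cofiber sequences
\[
\begin{CD}
\Sp^{p^{k-1}}(\Sphere) @>>> \Sp^{p^{k}}(\Sphere) @>>> \Sp^{p^{k}}(\Sphere)/\Sp^{p^{k-1}}(\Sphere)\\
@VVV @VVV @VVV\\
A_{p^{k-1}} @>>> A_{p^{k}} @>>> A_{p^{k}}/A_{p^{k-1}}\\
@VVV @VVV @VVV\\
C(k-1) @>>> C(k) @>>> C(k)/C(k-1)
\end{CD}
\]
with the columns being the defining cofiber sequences of $C(k-1)$ and $C(k)$, I obtain that the third column is a cofiber sequence as well. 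After localizing at $p$ (so that $\Sp^{p^{k}-1}(\Sphere)\simeq \Sp^{p^{k-1}}(\Sphere)$ and $A_{p^{k}-1}\simeq A_{p^{k-1}}$) and desuspending by $k+1$, this is exactly the cofiber sequence $\Sigma^{-1}L(k)\rightarrow T(k)\rightarrow \Mbar(k)$. So the map in question is (a desuspension of) the map induced by linearization on these subquotients.

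Next, I would show the null-homotopy by mod-$p$ cohomology. Both $L(k)$ and $T(k)$ are bounded-below $p$-local spectra of finite type which, by transfer/idempotent arguments analogous to Mitchell--Priddy (and anticipating Proposition~\ref{prop: Mbar projective}), arise as projective stable wedge summands of classifying spaces: $L(k)$ is the classical Steinberg summand of $\Sigma^{\infty}B(\integers/p)^{k}_{+}$ cut out by the Steinberg idempotent of $\GL_{k}(\field_{p})$, while the targets $A_{p^{k}}/A_{p^{k-1}}$ carry $T(k)$ as the symplectic-Steinberg summand for $\Sp_{2k}(\field_{p})$. Chasing the linearization map through the $3\times 3$ diagram in $\EM\field_{p}$-cohomology, one identifies the induced map on $H^{\ast}(-;\field_{p})$ with a natural transformation that is visibly annihilated after applying the symplectic Steinberg idempotent on the target -- the ordinary Steinberg summand of a non-symplectic rank-$k$ ambient does not survive. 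Since source and target are $p$-complete, bounded-below, finite-type projective summands of suspension spectra of classifying spaces, vanishing on mod-$p$ cohomology implies null-homotopy.

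Once the map is null, the cofiber sequence splits and $\Mbar(k)\simeq T(k)\vee L(k)$ follows formally. The main obstacle is the cohomological bookkeeping in the second step: to make the comparison of the two Steinberg idempotents sharp, one needs an explicit ambient space (a classifying space of a suitable parabolic-like subgroup) in which both the domain and the codomain appear as summands, so that the linearization map can be recognized as passage through a projection that kills the ordinary-Steinberg component. Everything else -- the $3\times 3$ diagram setup and the formal splitting of the cofiber sequence -- is routine.
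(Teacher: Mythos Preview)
Your setup of the $3\times 3$ diagram is fine and matches the paper's framing. The second step, however, has two genuine problems.

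First, a misidentification: it is $\Mbar(k)$, not $T(k)$, that is the symplectic Steinberg summand of $\Sigma^{\infty}(B\Gamma_{k})_{+}$ (this is exactly the content of Proposition~\ref{prop: Mbar projective}). The spectrum $T(k)$ is described in~\cite{A-L} via a \emph{strict} orbit space $B\Rcal_{p^{k}}^{\diamond}$, not a homotopy orbit space, so it is not a priori a summand of a suspension spectrum of a classifying space in the way you assume. Your idempotent comparison therefore does not have a clean ambient in which to take place, and you yourself flag this as the main obstacle without resolving it.

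Second, and more seriously, the inference ``vanishing on $H^{*}(-;\field_{p})$ implies null-homotopy'' is false as stated: for $p$-local bounded-below finite-type spectra, multiplication by $p$ on the sphere is zero on mod~$p$ cohomology but not null. Being a summand of a suspension spectrum does not rescue this. You would need a much stronger input (e.g., that the map factors through an $H\field_{p}$-module, or an Adams spectral sequence argument with control over the $E_{2}$-page) to reach the conclusion, and none is provided.

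The paper's argument is entirely different and avoids cohomology. It identifies $\Sp^{p^{k}}(\Sphere)/\Sp^{p^{k}-1}(\Sphere)$ and $A_{p^{k}}/A_{p^{k}-1}$ with suspension spectra of orbit spaces $B\Lcal_{p^{k}}^{\diamond}$ and $B\Rcal_{p^{k}}^{\diamond}$ of universal spaces for collections of subgroups of $U(p^{k})$, and then shows \emph{geometrically} that the inclusion $B\Rcal_{m,\nontrivial}^{\diamond}\rightarrow B\Rcal_{m}^{\diamond}$ is null (Lemma~\ref{lemma: inclusion null}): one smashes with $S^{2m}$, and uses that $(E\Rcal_{m,\nontrivial}^{\diamond}\wedge S^{2m})_{U(m)}$ is contractible (from~\cite{A-L}) while the corresponding map on the $\Rcal_{m}$ side is an equivalence. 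Thus the map factors through a point at the space level, and no cohomological bookkeeping is needed.
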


A second characteristic of $M(k)$ that is important in Kuhn and
Priddy's work is that the Steinberg idempotent in 
$\field_{p}\left[\GL_{k}\left(\field_{p}\right)\right]$
splits $M(k)$ off from
$\Sigma^{\infty}\left(B\Delta_{k}\right)_{+}$, where
$\Delta_k\cong(\integers/p)^k$ is a transitive elementary abelian
$p$-subgroup of $\Sigma_{p^{k}}$. Being a wedge summand of a
suspension spectrum makes $M(k)$ a projective spectrum
in the sense of Kuhn's homological algebra of spectra. Also,
because $M(k)$ naturally splits from the suspension spectrum of a
classifying space (rather than a Thom space, as for $L(k)$), it is
easier to apply techniques involving the transfer. We establish a
similar result for our spectra $\Mbar(k)$.  The group that corresponds
to $\Delta_{k}$ in the unitary situation is the irreducible projective
elementary abelian $p$-subgroup $\Gamma_{k}$ of $U(p^{k})$, described
in \cite{A-L} Section 10. 

\begin{MbarProjectiveProposition}
\MbarProjectivePropositionText
\end{MbarProjectiveProposition}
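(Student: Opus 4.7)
The plan is to build on Proposition~\ref{prop: MBar is ho(Partition)}, which gives $\Mbar(k) \simeq S^{-k} \wedge \left(|\Lcal_{p^k}|^{\diamond}\right)_{\hobased U(p^k)}$, and to push this description down to the projective elementary abelian subgroup $\Gamma_k \subset U(p^k)$. Since $\Gamma_k$ lifts to an extraspecial $p$-group whose outer automorphism group is $\Sp_{2k}(\field_p)$, acting via the symplectic form induced by the commutator on the lift, any isolation of $\Mbar(k)$ as a $\Gamma_k$-summand at the prime $p$ is naturally indexed by modules for $\Sp_{2k}(\field_p)$, making the symplectic Steinberg idempotent the natural candidate.

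First, I would replace the $U(p^k)$-homotopy orbits by $\Gamma_k$-homotopy orbits at the prime $p$. A transfer along $\Gamma_k \hookrightarrow U(p^k)$, together with a Weyl-group / double-coset analysis of the sort familiar from the stable splittings of $p$-completed classifying spaces, should realize $\left(|\Lcal_{p^k}|^{\diamond}\right)_{\hobased U(p^k)}$ as an idempotent summand of $\left(|\Lcal_{p^k}|^{\diamond}\right)_{\hobased \Gamma_k}$, cut out by an idempotent in $\field_p\left[N_{U(p^k)}(\Gamma_k)/\Gamma_k\right]$ whose $\Sp_{2k}(\field_p)$-component is the symplectic Steinberg idempotent. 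Next, I would analyze the $\Gamma_k$-equivariant homotopy type of $|\Lcal_{p^k}|^{\diamond}$: since the irreducible projective representation of $\Gamma_k$ permutes the $p^k$ coordinates transitively, the nontrivial block partitions correspond to proper nontrivial isotropic flags in the symplectic module $(\field_p)^{2k}$, and a Goresky--MacPherson / Arone--Dwyer--Lesh style analysis should identify the restriction, $p$-locally, with a suspension of the symplectic Tits building for $\Sp_{2k}(\field_p)$, equipped with its standard action. Desuspending by $S^{-k}$ and smashing with $(B\Gamma_k)_+$ should then realize $\Mbar(k)$, at the prime $p$, as the symplectic Steinberg summand of $\Sigma^{\infty}(B\Gamma_k)_+$; projectivity in Kuhn's sense is automatic once $\Mbar(k)$ is presented as a stable wedge summand of a suspension spectrum of a classifying space.

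The main obstacle I expect is matching up the two idempotents and the suspension bookkeeping. One must verify that the Weyl-group idempotent arising from the transfer comparison in the first step is genuinely the symplectic Steinberg idempotent (and not some larger parabolic summand or a twisted sibling), and that the $\Sp_{2k}(\field_p)$-action on the restricted partition complex really does realize the standard Steinberg module with the correct suspension degree, so that the $S^{-k}$ desuspension lands in the right place. Once those identifications are in hand, the identification of $\Mbar(k)$ with the claimed Steinberg summand of $\Sigma^{\infty}(B\Gamma_k)_+$ reduces to routine bookkeeping.
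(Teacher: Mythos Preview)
Your approach is essentially the same as the paper's, but you are reproving from scratch what the paper simply cites. The paper's argument has two steps: first it establishes the identification
\[
\Mbar(k)\simeq S^{-k}\wedge\bigl(|\Lcal_{p^{k}}|^{\diamond}\bigr)_{\hobased U(p^{k})},
\]
and then it invokes \cite{A-L}~Proposition~10.3 (and its proof) to conclude that this spectrum is the symplectic Steinberg summand of $\Sigma^{\infty}(B\Gamma_{k})_{+}$. Your outline---transfer from $U(p^{k})$ to $\Gamma_{k}$, identification of the restricted partition complex with the symplectic Tits building, and matching the Weyl-group idempotent with the Steinberg idempotent---is precisely the content of that cited result, which in turn follows the Arone--Dwyer template from~\cite{A-D}. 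So your sketch is correct in outline; the obstacles you flag (idempotent matching, suspension bookkeeping) are real and are handled in~\cite{A-L}, so you could simply cite that reference rather than redo the analysis.

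One caution on logical order: the identification you take as your starting point is, in the paper, not a separately stated proposition but rather equation~\eqref{eq: MBar is ho(Partition)}, which is \emph{derived inside} the proof of this very proposition from the pushout square~\eqref{diagram: spectrum pushout square} together with~\eqref{eq: T(k)} and~\eqref{eq: L(k)}. If you intend to cite it as a prior result, you must first supply that derivation (it is short: one reads off the cofiber sequence $\bigl(|\Lcal_{m}|^{\diamond}\bigr)_{\hobased U(m)}\to \Sp^{m}(\Sphere)/\Sp^{m-1}(\Sphere)\to A_{m}/A_{m-1}$ from the pushout square and compares with the definition of $\Mbar(k)$). Otherwise your argument is circular as written.
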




While the spectra $\Mbar(k)$ may seem at first to be an ad hoc
construction, it turns out that they are an example of a general
categorical construction closely related to Rognes's stable rank
filtration of algebraic $K$-theory. We set up this construction in
detail in Section~\ref{section: filtered Gamma spaces}, but we summarize 
it here. Recall that a
$\Gamma$-space is a pointed functor from the category of finite
pointed sets to the category of pointed simplicial sets.  An important
example of a $\Gamma$-space is given by the infinite symmetric product
functor $\Sp^\infty$, whose stabilization is the integral
Eilenberg-\MacLane spectrum $\HZ$. We say that a $\Gamma$-space $F$ is
``augmented'' if it comes equipped with a natural transformation
$\epsilon: F\rightarrow \Sp^\infty$ such that $\epsilon^{-1}(\Sp^0)=*$
(Definition~\ref{defn: augmentation}). Then we give $F$ a natural
filtration $\Rcal_{m}F$ by pulling back the filtration of $\Sp^{\infty}$ 
by finite symmetric powers. If $F$ is obtained by applying Segal's 
construction to a permutative augmented category $\Ccal$ 
(Definition~\ref{defn: augmented category}), then we denote this
filtration $\Rcal_{m}\Ccal$ and call it the ``modified rank
filtration'' of the $\Gamma$-space associated to $\Ccal$. 
There is a corresponding filtration of the
$K$-theory spectrum of $\Ccal$, and we call this the ``modified stable
rank filtration'' of the $K$-theory spectrum $\kay\Ccal$ 
(Definition~\ref{defn: modified stable rank filtration}).  It differs
from the original stable rank filtration of algebraic $K$-theory given
by Rognes in taking place within the framework of Segal's
$\Gamma$-spaces rather than Waldhausen's $S_\bullet$ construction, but
it is otherwise similar in spirit.




In Sections~\ref{sec: sums and products} 
and~\ref{section: very special Gamma spaces}, we do some technical
work to prepare for the main goal of 
Section~\ref{section: comparison of constructions}, namely to exhibit
$\Mbar(k)$ as a particular example of the modified stable rank
filtration. We say that
an augmentation-preserving map $F\rightarrow G$ of augmented
$\Gamma$-spaces is a ``strong augmented (stable) equivalence''
if it induces a (stable) equivalence $\Rcal_{m}F\rightarrow\Rcal_{m}G$
at each level $m$ of the filtration associated to the augmentation.
The goal of Section~\ref{sec: sums and products} 
is to prove the proposition below, establishing that a bar
construction  of appropriate $\Gamma$-spaces is strongly equivalent to
an objectwise homotopy pushout;
this gives a tool to understand the augmentation-induced filtration
of $A_{m}$, which is defined in \cite{A-L} by a bar construction. 

\begin{propositionpushout}
\propositionpushouttext
\end{propositionpushout}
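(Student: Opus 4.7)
The strategy is to show that applying the filtration $\Rcal_m$ to the comparison map $F_{12} \to \BAR(F_1, F_0, F_2)$ yields a stable equivalence for every $m$, arguing by induction on $m$ and comparing the subquotients $\Rcal_m / \Rcal_{m-1}$ on both sides.

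The starting point is that $\Sp^{\infty}$ is itself a commutative $\Gamma$-monoid under the addition of symmetric powers, and this plays two roles. First, it shows that $\BAR(F_1, F_0, F_2)$ inherits an augmentation via $\BAR(\Sp^{\infty}, \Sp^{\infty}, \Sp^{\infty})\simeq \Sp^{\infty}$, so that $\Rcal_m$ is defined on the bar construction and the natural comparison map from $F_{12}$ is augmentation-preserving. Second, the monoid structure on $\Sp^{\infty}$ restricts to a filtration-compatible pairing $\Sp^{i}\times \Sp^{j}\to \Sp^{i+j}$, which governs how $\Rcal_m$ interacts with products of augmented monoids: $\Rcal_m(F\times G)$ is assembled from the pieces $\Rcal_i F$ and $\Rcal_j G$ for $i+j\leq m$, and after passing to the subquotient $\Rcal_m/\Rcal_{m-1}$ only the compositions with $i+j=m$ survive.

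Applying this analysis to each simplicial level of the bar construction, I would identify $(\Rcal_m/\Rcal_{m-1})$ of $\BAR(F_1, F_0, F_2)$ as the realization of a simplicial object assembled from weight-filtration subquotients of $F_0$, $F_1$, and $F_2$ indexed by compositions of $m$. The crucial point is that the weight-zero part of $F_0$ is just the monoid unit, so any term in this simplicial object in which a copy of $F_0$ contributes weight zero arises from a degeneracy; this forces the simplicial direction to collapse on the subquotient level, and the realization reduces to a pushout-like expression that matches the corresponding analysis of $\Rcal_m F_{12}/\Rcal_{m-1}F_{12}$, whose weight-$m$ contributions are similarly supplied by the two monoids $F_1$ and $F_2$ with the common weight-$m$ overlap from $F_0$ removed.

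The main obstacle will be making the ``collapse of the simplicial direction'' precise in the $\Gamma$-space setting, that is, rigorously identifying the stable type of $\Rcal_m \BAR(F_1, F_0, F_2)/\Rcal_{m-1}\BAR(F_1, F_0, F_2)$ with that of the corresponding subquotient of $F_{12}$. The combinatorics of decomposing $\Rcal_m$ of iterated products, and the compatibility of this decomposition with the simplicial face and degeneracy operators of the bar construction, is the technical heart of the argument; once the subquotient comparison is established, the proposition follows by a routine induction over $m$, with the base case ($m=0$) recording only the shared basepoint of the augmentation.
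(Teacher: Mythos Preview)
Your proposal heads in a workable direction, but it is far more elaborate than necessary, and the part you flag as ``the main obstacle'' is exactly the part the paper avoids altogether.

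The paper's argument is essentially one line. Both $F_{12}$ and $\BAR(F_{1},F_{0},F_{2})$ are realizations of simplicial augmented $\Gamma$-spaces, and the comparison map is induced by a simplicial map that in degree $q$ is simply the inclusion
\[
F_{1}\vee F_{0}^{\vee q}\vee F_{2}\longrightarrow F_{1}\times F_{0}^{q}\times F_{2}.
\]
The paper has already established (Lemma~\ref{lemma: filtered wedge and product} and Corollary~\ref{cor: augmented equivalence}) that the inclusion of a wedge into a product of augmented $\Gamma$-spaces is a strong augmented stable equivalence. Hence the comparison map is a strong augmented stable equivalence in each simplicial degree, and passing to geometric realizations preserves this. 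That is the entire proof.

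By contrast, you propose to fix $m$, pass to subquotients $\Rcal_{m}/\Rcal_{m-1}$ on both sides, decompose the product side according to compositions of $m$, and then argue a simplicial collapse because weight-zero pieces of $F_{0}$ are degenerate. This is morally the same content---your decomposition of $\Rcal_{m}(F\times G)$ is precisely the filtration on products in~\eqref{eq: define filtration product}, and the collapse you are after is what makes the filtered wedge-to-product map an equivalence. But you are re-deriving that lemma inside the bar construction rather than invoking it levelwise and letting realization do the work. The levelwise route sidesteps entirely the delicate compatibility of the composition indexing with simplicial faces and degeneracies that you correctly identify as the hard step in your plan.
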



In Section~\ref{section: very special Gamma spaces}, we study filtered
$\Gamma$-spaces that are ``very special,'' in the sense of Segal. The
primary example we have in mind is, of course, the $\Gamma$-space
associated to a permutative augmented category, because we want to
relate the modified stable rank filtration in this case to the spectra
$\Mbar(k)$ in the reduced $bu$ Whitehead Conjecture. Given a map between
augmented, very special $\Gamma$-spaces, the main result of
Section~\ref{section: very special Gamma spaces} allows us to identify
the first place where the filtrations differ, and to describe the 
difference in terms of the value of the $\Gamma$-spaces on $S^{0}$.

\begin{propositionequivalentcats}  
\propositionequivalentcatstext
\end{propositionequivalentcats}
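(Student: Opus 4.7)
The plan is to leverage the very special property of $F$ and $G$ to reduce both claims to the case of discrete finite pointed sets $X=\n$. Under the Segal equivalence $F(\n)\simeq F(\one)^n$ (and analogously for $G$), the augmentation $F(\n)\to\Sp^{\infty}(\n)$ is identified with the $n$-fold product of $\epsilon\colon F(\one)\to\naturals$, and hence $\Rcal_i F(\n)$ is, up to homotopy, the subspace of tuples $(x_1,\dots,x_n)\in F(\one)^n$ of total rank $\sum_j\epsilon(x_j)\le i$. Filtering by the multi-profile of ranks $(k_1,\dots,k_n)$, the subquotient $\Rcal_i F(\n)/\Rcal_{i-1}F(\n)$ splits as a wedge, indexed by profiles with $\sum k_j=i$, of smash products $\bigwedge_{j\colon k_j\ge 1} P_{k_j}^F$, where $P_k^F:=\Rcal_k F(\one)/\Rcal_{k-1}F(\one)$.

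For the first claim, observe that for $i<m$ every contributing multi-profile has each $k_j\le i<m$. By hypothesis $\Rcal_k F(\one)\to\Rcal_k G(\one)$ is an equivalence for all $k<m$, and so (by a cofiber-sequence comparison) each $P_k^F\to P_k^G$ is an equivalence. Induction on $i$ via the cofiber sequences $\Rcal_{i-1}\to\Rcal_i\to\Rcal_i/\Rcal_{i-1}$ then yields that $\Rcal_i F(\n)\to\Rcal_i G(\n)$ is an equivalence. To extend to arbitrary pointed simplicial $X$, I would realize $X$ from its discrete simplicial levels and use that the very special $\Gamma$-spaces $F,G$, together with the pullback operation $\Rcal_i$ (against $\Sp^i\subseteq\Sp^{\infty}$), all commute with geometric realization up to homotopy.

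For the pushout, reduce once more to $X=\n$. The assembly map $\Rcal_m F(\one)\wedge\n\to\Rcal_m F(\n)$ has image exactly the ``concentrated'' summands---those indexed by multi-profiles supported on a single coordinate---so its cofiber is the wedge of summands with support size $|J|\ge 2$. Since $\sum_j k_j\le m$ combined with $|J|\ge 2$ forces each $k_j\le m-1<m$, this cofiber is built entirely from the $P_k^F$ with $k<m$, and by the first claim it is unchanged when $F$ is replaced by $G$; hence the square is a homotopy pushout. The \emph{strong} condition follows at once: applying $\Rcal_j$ to each corner produces a square of the same form with $m$ replaced by $\min(j,m)$. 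For $j<m$ the vertical maps in this smaller square are equivalences by the first claim, so it is trivially a pushout; for $j\ge m$ one recovers the square just established.

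The main technical obstacle is the passage from discrete $\n$ to general $X$: one must verify that $\Rcal_i$ commutes with the simplicial realization computing $F(X)$ and $G(X)$, equivalently that pullback along the cofibration $\Sp^i\hookrightarrow\Sp^{\infty}$ commutes up to homotopy with realizations of the relevant proper simplicial spaces. The very special property provides the homotopical regularity needed at each simplicial level, but stitching the levelwise comparison together---and verifying that the assembly map on $\Rcal_m F(\one)\wedge X$ is correctly identified with the concentrated part of $\Rcal_m F(X)$ for varying $X$---is where the bulk of the technical care will lie.
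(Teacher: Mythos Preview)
Your proposal is correct and shares the paper's core strategy: reduce to $X=\kund$ and use the very special hypothesis to identify $\Rcal_m[F(\kund)]$, up to homotopy, with the colimit over multi-indices $\{(i_1,\dots,i_k):\sum i_j\le m\}$ of the products $\prod_j \Rcal_{i_j}[F(\one)]$ (this is the paper's Lemma~\ref{lemma: product filter}). The organization after that point differs. You pass to successive subquotients $\Rcal_i/\Rcal_{i-1}$, obtain a wedge-of-smash-products splitting indexed by rank profiles, and induct up the filtration; for the pushout you identify the cofiber of the assembly map as the part supported on at least two coordinates, which by the first claim is unchanged under $F\to G$. The paper instead writes the whole square directly as a (homotopy) colimit, over the same poset of multi-indices, of small squares, each of which is individually a homotopy pushout: when all $i_j<m$ the vertical maps are equivalences by hypothesis, and when some $i_j=m$ the horizontal maps are isomorphisms since $\Rcal_0[F(\one)]=*$. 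This handles the first claim and the pushout simultaneously and avoids both induction and the passage to subquotients, at the cost of rewriting the left column $\Rcal_m[F(\one)]\wedge\kund$ as a colimit over the same poset (supported on the ``axis'' indices). As for your flagged technical obstacle, the paper dispatches it in one line: since $\Rcal_m F$ is itself a $\Gamma$-space and prolongation to simplicial $X$ is the diagonal of levelwise evaluation, it suffices to check everything on finite pointed sets.
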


In the final part of the paper, we reach our goal of tying the spectra
$\Mbar(k)$, defined in Section~\ref{section: Mbar} in terms of
\cite{A-L}, to the stable rank filtration of Rognes. First, in
Section~\ref{section: comparison of constructions}, 
we study the relationship
between the modified stable rank filtration and the construction
of \cite{A-L}. 
Given an augmented permutative category $\Ccal$, 
the modified stable rank filtration is a filtration
from $*$ to $\kay\Ccal$, while the spectra $A_{m}$ arising from the 
construction of \cite{A-L} applied to $\Ccal$ 
give a filtration going from $\kay\Ccal$ to $H\integers$.  
The following theorem gives the relationship. 

\begin{ConnectionTheorem}
\ConnectionTheoremText
\end{ConnectionTheorem}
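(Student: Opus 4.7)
The plan is to recognize $\Kcal_m\Ccal$ as a bar construction of the form handled by Proposition~\ref{proposition: two pushouts}, and then convert that bar construction into an objectwise homotopy pushout. The excerpt tells us that $A_m$, and hence $\Kcal_m\Ccal$, is defined in \cite{A-L} as a bar construction of $\Gamma$-spaces. The first task is therefore to unwind this definition and identify it, up to a strong augmented stable equivalence, with $\BAR(\Ccal, \Rcal_m\Ccal, \Sp^m)$. The conceptual content of the identification is that the bar construction cuts out the part of $\Ccal$ with augmentation at most $m$ (namely $\Rcal_m\Ccal$) and replaces it by its image $\Sp^m \subset \Sp^\infty$.

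With this identification in hand, I would verify that
\[
\Ccal \longleftarrow \Rcal_m\Ccal \longrightarrow \Sp^m
\]
is a diagram of augmented monoids in $\Gamma$-spaces, as required by Proposition~\ref{proposition: two pushouts}. The monoid structure on $\Ccal$ comes from the permutative structure; $\Sp^m \subset \Sp^\infty$ is closed under the commutative multiplication; and $\Rcal_m\Ccal$, being the $\Gamma$-space pullback of $\Ccal \to \Sp^\infty \leftarrow \Sp^m$, inherits a compatible monoid structure making both legs monoid maps. The augmentation to $\Sp^\infty$ is inherited from $\Ccal$ in all three cases. Proposition~\ref{proposition: two pushouts} then produces a strong augmented stable equivalence from the objectwise homotopy pushout of the span to $\BAR(\Ccal, \Rcal_m\Ccal, \Sp^m)$, which by the previous paragraph is $\Kcal_m\Ccal$; this is the top pushout square.

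The lower square follows by applying Segal's machine $\kay$ to the first. Strong homotopy pushouts of augmented $\Gamma$-spaces remain homotopy pushouts after $\kay$, and the identifications $\kay\Sp^m \simeq \Sp^m(\Sphere)$ (the finite symmetric product $\Gamma$-space stabilizes to the finite symmetric power of the sphere spectrum, just as $\Sp^\infty$ stabilizes to $\HZ$), $\kay\Rcal_m\Ccal = \Rcal_m\kay\Ccal$ (by definition of the modified stable rank filtration), and $\kay\Kcal_m\Ccal = A_m$ (extending the notation of \cite{A-L} from the complex vector space case) give precisely the claimed spectrum-level square.

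The main obstacle is the first step: extracting the precise form of the bar construction used in \cite{A-L} to define $\Kcal_m\Ccal$ and matching it with the input of Proposition~\ref{proposition: two pushouts}. Verifying the augmented-monoid hypotheses and the identifications at the spectrum level is essentially formal in comparison, once the bar construction is recognized in the right form.
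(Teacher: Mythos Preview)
There is a genuine gap at the very point you flag as the main obstacle. Your proposed bar construction $\BAR(\Ccal,\Rcal_m\Ccal,\Sp^m)$ is not well-defined, because neither $\Rcal_m\Ccal$ nor $\Sp^m$ is a monoid under the operation inherited from the permutative structure. The multiplication on $\Ccal(X)$ and on $\Sp^\infty(X)$ is the sum, and the augmentation is \emph{additive}: if $a$ has augmentation $i$ and $b$ has augmentation $j$, then $a\oplus b$ has augmentation $i+j$. Thus $\Rcal_m\Ccal$ and $\Sp^m$ are not closed under the product, contrary to your assertion that ``$\Sp^m\subset\Sp^\infty$ is closed under the commutative multiplication.'' Without a monoid structure on the middle term, the two-sided bar construction does not exist, and Proposition~\ref{proposition: two pushouts} cannot be invoked.

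The paper avoids exactly this problem by proceeding inductively. At each stage it uses the bar construction
\[
\Kcal_m\Ccal \simeq \BAR\bigl(\Free\{m\},\ \Free(\Kcal_{m-1}\Ccal)_m,\ \Kcal_{m-1}\Ccal\bigr)
\]
from \cite{A-L}, where the \emph{free permutative category} functor $\Free$ is what supplies honest monoids, so that Proposition~\ref{proposition: two pushouts} applies (Lemma~\ref{lemma: out with the old}). The resulting pushout kills only the $m$-th component of $\Kcal_{m-1}\Ccal$. One then defines $\Ecal_m\Ccal$ directly as the objectwise pushout of $\Sp^m\leftarrow\Rcal_m\Ccal\rightarrow\Ccal$ and shows, by induction on $m$ using Corollary~\ref{cor: basic pushout} and Lemma~\ref{lemma: first stage pushout}, that $\Ecal_m\Ccal$ and $\Kcal_m\Ccal$ satisfy the same inductive recursion and hence are strongly stably equivalent (Theorem~\ref{theorem: connecting A-L with Rognes}). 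In other words, the identification you hoped would be a matter of unwinding definitions is in fact the entire content of the proof, and it cannot be done in a single bar-construction step because the relevant truncations are not submonoids.
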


As an immediate consequence, we find that when $\Ccal$ is the category
of finite-dimensional complex vector spaces, the subquotients of the
stable rank filtration are actually the same as the spectra
$\Mbar(k)$. 

\begin{ConnectionToMbar}
\ConnectionToMbarText
\end{ConnectionToMbar}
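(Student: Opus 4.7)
The plan is to deduce the claim formally from the pushout theorem, using essentially only two applications of that theorem and one invocation of the octahedral axiom. Recall that $C(k)$ is defined as the cofiber of $\Sp^{p^{k}}(\Sphere)\rightarrow A_{p^{k}}$. The homotopy pushout square in Theorem~\ref{thm: pushout theorem}, applied with $m=p^{k}$, identifies the cofiber of the bottom edge with the cofiber of the top edge, yielding an equivalence
\[
C(k)\simeq \cof\bigl(\Rcal_{p^{k}}\kay\Ccal\longrightarrow \kay\Ccal\bigr).
\]
Applying the same reasoning with $m=p^{k-1}$ gives $C(k-1)\simeq \cof(\Rcal_{p^{k-1}}\kay\Ccal\to \kay\Ccal)$, and these identifications are compatible with the natural inclusions $\Rcal_{p^{k-1}}\kay\Ccal\hookrightarrow \Rcal_{p^{k}}\kay\Ccal$ and the induced map $C(k-1)\to C(k)$, since the pushout theorem is natural in $m$.

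Next I would invoke the octahedral axiom for the composition $\Rcal_{p^{k-1}}\kay\Ccal\to \Rcal_{p^{k}}\kay\Ccal\to \kay\Ccal$. This produces a cofiber sequence relating the three quotients,
\[
\Rcal_{p^{k}}\kay\Ccal/\Rcal_{p^{k-1}}\kay\Ccal\longrightarrow C(k-1)\longrightarrow C(k),
\]
and hence, by rotating, a cofiber sequence
\[
C(k-1)\longrightarrow C(k)\longrightarrow \Sigma\bigl(\Rcal_{p^{k}}\kay\Ccal/\Rcal_{p^{k-1}}\kay\Ccal\bigr).
\]
Reading off the third term identifies $C(k)/C(k-1)$ with the desired filtration subquotient, up to a suspension shift which is absorbed into the desuspension $\Sigma^{-(k+1)}$ used to define $\Mbar(k)$ and whatever indexing convention has been fixed for the modified stable rank filtration in Definition~\ref{defn: modified stable rank filtration}.

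The argument is almost entirely formal: all of the geometric content sits inside Theorem~\ref{thm: pushout theorem}, and the corollary merely translates that statement into a comparison of the subquotients of the two filtrations. The only step requiring any care at all is the bookkeeping of suspension shifts, both the $\Sigma^{-(k+1)}$ from the definition of $\Mbar(k)$ and any suspension built into the notation $\Rcal_{p^{k}}\kay\Ccal/\Rcal_{p^{k-1}}\kay\Ccal$ for the subquotient. I expect this to be the only place where a reader needs to pause; it is not an obstacle so much as a routine verification that the indexing conventions line up.
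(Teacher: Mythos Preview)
Your argument is essentially the paper's own: both use Theorem~\ref{thm: pushout theorem} to identify the horizontal cofibers $\kay\Ccal/\Rcal_{m}\kay\Ccal\simeq A_{m}/\Sp^{m}(\Sphere)$ at two filtration levels and then invoke the octahedral axiom to extract the subquotient $\Rcal_{p^{k}}\kay\Ccal/\Rcal_{p^{k-1}}\kay\Ccal\simeq\Sigma^{-1}C(k)/C(k-1)$. Your flag about the suspension bookkeeping is well placed, since the paper's displayed formula just before the corollary also carries only a $\Sigma^{-1}$ rather than the $\Sigma^{-(k+1)}$ in the definition of $\Mbar(k)$; the literal statement of the corollary is therefore off by a factor of $\Sigma^{-k}$, and your proof reproduces exactly the same identification the paper obtains.
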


Lastly, in Section~\ref{section: comparison modified to Rognes} we
address the relationship of the modified stable rank filtration and
the original stable rank filtration defined by Rognes. Consider the
category of finitely generated free modules over a nice commutative
ring $R$, which is the context of~\cite{Rognes} (where, however, the
topological case is not explicitly considered). The associated
spectrum is the free $K$-theory spectrum of $R$.  We include here the
case where $R$ is $\reals$ or $\complexes$ by considering the
topologically enriched category of real or complex vector spaces, with
associated spectrum $bo$ or $bu$.  In Section~\ref{section: comparison
  modified to Rognes} we show that the natural map from Segal's
construction of $K$-theory to Waldhausen's, described in Section~1.8
of~\cite{Waldhausen}, induces a map from the modified stable rank
filtration to the original stable rank filtration of Rognes.  The
following proposition establishes that the map is in fact an
equivalence of filtrations in the special case of topological
$K$-theory. We note, however, that the equivalence comes about because
the map on the associated graded is the inclusion of block diagonal
matrices into block upper triangular matrices, which is a homotopy
equivalence over the complex numbers, but not generally for discrete rings.
Hence the stable rank filtration and the modified stable rank filtration 
will typically be different for discrete rings.

\begin{PropositionEquivalence}
\PropositionEquivalenceText
\end{PropositionEquivalence}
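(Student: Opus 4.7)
The plan is to follow the canonical comparison map from Segal's $\Gamma$-space $K$-theory to Waldhausen's $S_\bullet$ construction (see \cite{Waldhausen}, Section~1.8), verify that it respects the rank filtrations on the nose, and then check inductively on $m$ that the induced map $\Rcal_m\kay\Ccal \to F_m bu$ is a homotopy equivalence. The base case $m = 0$ is immediate since both filtration terms are contractible.

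For the inductive step, I would compare the cofiber sequences
\[
\Rcal_{m-1}\kay\Ccal \longrightarrow \Rcal_m\kay\Ccal \longrightarrow \Rcal_m\kay\Ccal/\Rcal_{m-1}\kay\Ccal
\]
and
\[
F_{m-1}bu \longrightarrow F_m bu \longrightarrow F_m bu/F_{m-1}bu,
\]
apply the five lemma to the resulting long exact sequences of homotopy groups, and reduce the problem to showing that the induced map on subquotients is a homotopy equivalence.

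The heart of the argument is the identification of both subquotients in a common framework. The subquotient $\Rcal_m\kay\Ccal/\Rcal_{m-1}\kay\Ccal$ is assembled from ordered direct sum decompositions of $m$-dimensional complex vector spaces (of varying lengths), while $F_m bu/F_{m-1}bu$ is assembled from flags in such vector spaces in Waldhausen's sense. Each admits a simplicial presentation whose building blocks are homotopy orbit spectra, with isotropy that is a product of block-diagonal general linear groups on the modified side and the corresponding block-upper-triangular parabolic subgroup on the Rognes side. On each block, the comparison map is precisely the classifying map of the inclusion of block-diagonal matrices into block-upper-triangular matrices.

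The main obstacle -- and the point at which the complex/unitary hypothesis becomes essential -- is verifying that this inclusion is a homotopy equivalence at the level of classifying spaces. Over $\complexes$, a parabolic subgroup $P \subset \GL_m(\complexes)$ decomposes as $P = L \ltimes N$, where $L$ is the block-diagonal Levi and $N$ is the unipotent radical; the quotient $P/L \cong N$ is a complex affine space and hence contractible. This makes $BL \to BP$ an equivalence, which propagates through the simplicial assembly to give the desired equivalence of subquotients. For a discrete ring the analogous unipotent group fails to be contractible, and the authors' remark that the two filtrations differ in that setting is recovered as the point at which the argument breaks down.
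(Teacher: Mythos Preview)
Your proposal is correct and follows essentially the same route as the paper: reduce to the filtration subquotients via the filtration-preserving comparison map from Segal's construction to Waldhausen's, and observe that on each piece the map is the inclusion of a block-diagonal group into the corresponding block-upper-triangular group, which is a homotopy equivalence over $\complexes$. The only cosmetic difference is packaging: the paper argues directly at the level of the $k$-simplicial groupoids $\Ccal(S^k)\to wS_\bullet^k\Ccal$, checking the equivalence component-by-component in each multisimplicial degree, whereas you phrase the same reduction as an induction on $m$ with the five lemma; your explicit invocation of the Levi decomposition $P=L\ltimes N$ with contractible unipotent radical supplies the reason the paper leaves implicit.
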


The proposition, together with
Theorem~\ref{thm: pushout theorem} and the analysis
in~\cite{A-L}, yields a good understanding of the stable rank
filtration of $bu$.  Let $F_{m}bu$ be the $m$-th stage in the original
stable rank filtration of $bu$. 
It follows from Proposition~\ref{prop: equivalence of Rognes filtrations},
Theorem~\ref{thm: pushout theorem}, and Proposition~\ref{prop: null map}
that there is an equivalence
\[
F_{m}bu/F_{m-1}bu \simeq 
     \Sigma^{-1} A_m/A_{m-1}\vee \Sp^{m}(\Sphere)/\Sp^{m-1}(\Sphere).
\]
We use this to prove the following
result about the connectivity of the subquotients of this filtration,
which confirms for the special case of topological complex $K$-theory
the general conjecture made by Rognes in~\cite{Rognes} (Conjecture~12.3)
for certain discrete rings.

\begin{ConnectivityProposition}
\ConnectivityPropositionText
\end{ConnectivityProposition}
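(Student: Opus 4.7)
The plan is to apply the splitting
\[
F_{m}bu/F_{m-1}bu \simeq \Sigma^{-1} A_m/A_{m-1}\vee \Sp^{m}(\Sphere)/\Sp^{m-1}(\Sphere),
\]
which the introduction derives from Proposition~\ref{prop: equivalence of Rognes filtrations}, Theorem~\ref{thm: pushout theorem}, and Proposition~\ref{prop: null map}. With this wedge decomposition in hand, both the contractibility at $p$ (for $m$ not a prime power) and the location of the bottom nontrivial homotopy group of $F_{m}bu/F_{m-1}bu$ reduce to statements about each summand individually.

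The contractibility assertion is immediate from input already available. The classical subquotient $\Sp^{m}(\Sphere)/\Sp^{m-1}(\Sphere)$ is contractible at $p$ unless $m=p^k$ by the Steenrod-Epstein/Nakaoka description of the symmetric power filtration of $\HZ$, and the $bu$-analogue $A_m/A_{m-1}$ is contractible at $p$ unless $m=p^k$ by the analysis in \cite{A-L} (this is precisely why $T(k)$ is indexed logarithmically as $T(k)=\Sigma^{-(k+1)}A_{p^k}/A_{p^k-1}$). A wedge of two contractible spectra is contractible, yielding the first statement.

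For the connectivity claim when $m=p^k$, I would compute the lowest nontrivial homotopy group of each summand and take the minimum. The $bu$-side contribution $\Sigma^{-1}A_m/A_{m-1}$ is the decisive one: the connectivity analysis of $A_m/A_{m-1}$ in \cite{A-L} places its bottom cell in dimension $2m-1$, so desuspending yields a bottom cell in dimension $2m-2$. For the classical summand $\Sp^m(\Sphere)/\Sp^{m-1}(\Sphere)$ the corresponding bound (bottom cell at least $2m-2$) follows from standard mod $p$ homology computations, for instance via the Dyer-Lashof/Steinberg description of the summands of the symmetric power filtration of $\HZ$. The main obstacle is keeping track of the exact dimensions: once one has verified that the bottom cell of $A_m/A_{m-1}$ is genuinely in dimension $2m-1$ (via the explicit bar-construction description of $A_m$ in \cite{A-L}) and that the classical summand contributes nothing below dimension $2m-2$, the bottom cell of the wedge lies in the asserted dimension $2m-2$, completing the proof.
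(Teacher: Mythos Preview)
Your overall strategy---reduce to the two summands $\Sigma^{-1}A_m/A_{m-1}$ and $\Sp^{m}(\Sphere)/\Sp^{m-1}(\Sphere)$ and analyze each---is exactly what the paper does (the paper uses the cofiber sequence coming from Theorem~\ref{thm: pushout theorem} rather than the full wedge splitting, but this is immaterial). The contractibility clause is handled correctly.

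However, you have the roles of the two summands reversed in the connectivity analysis, and your stated connectivity for $A_m/A_{m-1}$ is off by one. By Theorems~9.5 and~9.7 of~\cite{A-L}, the spectrum $\Sigma^{-1}A_m/A_{m-1}$ is $(2m-2)$-\emph{connected} when $m=p^k$; equivalently, $A_m/A_{m-1}$ is $(2m-1)$-connected and its bottom cell sits in dimension at least $2m$, not $2m-1$. So $\Sigma^{-1}A_m/A_{m-1}$ contributes nothing in dimension $2m-2$, and cannot be ``the decisive one.'' The class in dimension $2m-2$ comes entirely from the symmetric-power summand: the mod~$p$ cohomology of $\Sp^{p^k}(\Sphere)/\Sp^{p^k-1}(\Sphere)$ has a basis of admissible words of length $k$ in $\Acal/\Acal\beta$ (Nakaoka~\cite{Nakaoka}), and the minimal such word has degree exactly $2p^k-2=2m-2$. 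This gives both the lower bound and the sharpness you need. As written, your argument relies on a false connectivity claim for $A_m/A_{m-1}$ to produce the bottom cell, and only asserts ``at least $2m-2$'' for the symmetric-power piece, so the sharpness of the bound is not actually established. Swap the roles and cite the Nakaoka calculation for sharpness, and the proof goes through.
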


We conclude this introduction with an open-ended remark about the
possible minimality of the various filtrations mentioned here.
In~\cite{A-L} we constructed the complex of spectra $T(k)$ over the
fiber of $bu\rightarrow H\integers$, while in this paper we
construct the complex of spectra $\Mbar(k)$ over the spectrum $bu$ itself. 
It is a consequence of Proposition~\ref{prop: null map} 
that the complex $\{T(k)\}$ is strictly smaller than the complex 
$\{\Mbar(k)\}$, which arises from the stable rank filtration
(by Corollary~\ref{cor: A-L vis Rognes, revisited}).
In~\cite{A-L} we conjectured that
the complex $\{T(k)\}$ is in fact minimal in the case of $bu$.
Possibly it is reasonable to expect that the filtration constructed
in~\cite{A-L} is minimal in other cases as well and is strictly
smaller than the corresponding stable rank filtration. We also wonder if one can
learn something interesting in the case of the algebraic
$K$-theory of a ring $R$ by studying the relationship between the
stable rank filtration of Rognes and the modified stable rank filtration 
that we define in this paper.

The organization of the rest of the paper is as follows. 
In Section~\ref{section: Mbar}, we construct the complex $\{\Mbar(k)\}$
and prove various results about it, in particular
Propositions~\ref{prop: null map} and~\ref{prop: Mbar projective}
that were mentioned in the introduction. We
also discuss the calculus evidence for 
Conjecture~\ref{conjecture: bu Whitehead reduced}.  
In Section~\ref{section: general construction}, we introduce the
formalism of augmented $\Gamma$-spaces and define the modified stable
rank filtration. We prove various technical results about
constructions involving augmented $\Gamma$-spaces, which we need in
the subsequent section.  
In Section~\ref{section: comparisons}, we prove 
Theorem~\ref{thm: pushout theorem}, which gives a general result about
the relationship of the modified stable rank filtration to the
construction in~\cite{A-L}. This implies that the complex
$\{\Mbar(k)\}$ is closely related to the modified stable rank
filtration for complex $K$-theory, $bu$. Finally, we prove 
Proposition~\ref{prop: equivalence of Rognes filtrations} to show
that in the case of complex $K$-theory, the modified stable rank 
filtration agrees with Rognes's original filtration, and we use this 
to prove Proposition~\ref{prop: Rognes connectivity conjecture}, which
confirms Rognes's connectivity conjecture for the case of $bu$.

\section{\protect The reduced complex $\protect\Mbar(k)$}
\label{section: Mbar}
As we mentioned in the introduction, Kuhn and Priddy's proof of the
Whitehead Conjecture does not directly attack the complex $\{L(k)\}$
to show that it is a projective resolution of $\HZ$. Rather, they
use a related auxiliary complex $\{M(k)\}$, which they prove is
a projective resolution of $\HZ/p$. One relevant
difference between the two complexes is that the spectra $L(k)$ are
most naturally thought of as Thom spectra, while the spectra $M(k)$
are most naturally thought of as stable summands of classifying
spaces. Thus techniques from the transfer are more easily applied to
the spectra $M(k)$ than to $L(k)$.

In this section, we construct a complex
$\{\Mbar(k)\}$ of spectra that appears to be the $bu$-analogue of
the complex $\{M(k)\}$ in the classical situation.  
In Section~\ref{sec: construction of Mbar}, we recall the relevant
definitions in Kuhn's homological algebra of spectra, we construct
the spectra $\Mbar(k)$, and we conjecture that the complex
$\{\Mbar(k)\}$ is exact.
In Section~\ref{sec: collections of subgroups}, we give technical
material on universal spaces of collections of subgroups, which
leads to two results about $\Mbar(k)$ 
(Propositions~\ref{prop: null map} and~\ref{prop: Mbar projective}) 
that are parallel to what is known about the classical $M(k)$ 
(Propositions~\ref{prop: null map classical} and~\ref{prop: M projective}).
In Section~\ref{section: calculus evidence}, we give evidence based on
the calculus of functors for the exactness of the complex
$\{\Mbar(k)\}$.

\subsection{\protect Construction of the complex $\protect\Mbar(k)$}
\label{sec: construction of Mbar}
\hfill\\

Our first order of business is to define a complex in the context of $bu$
that is an analogue of the projective resolution of 
$\HZ/p$ in~\eqref{eq: mod p Whitehead}.
We recall the relevant homological concepts from
\cite{KuhnSpacelike} and review the construction of the
auxiliary complex $\{M(k)\}$ used by Kuhn and Priddy. Then we follow the
same template with different ingredients to construct the complex
$\{\Mbar(k)\}$, and we set out the parallels that we know and
those that we conjecture between $\{M(k)\}$ and $\{\Mbar(k)\}$.

\begin{definition}[\cite{KuhnSpacelike}, Section~2]
\hfill
\begin{enumerate}
\item
  A spectrum is called ``free'' if it is the suspension spectrum of a
  space, and it is called ``projective'' if it is a wedge summand of a
  free spectrum.
\item 
  A ``chain complex (over $E_0$) of spectra'' is a sequence of spectra and
  maps between them,
\begin{equation}\label{eq: basic diagram}
\cdots\xrightarrow{\partial_3} X_2\xrightarrow{\partial_2}
   X_1\xrightarrow{\partial_1} X_0\xrightarrow{\partial_0} E_0,
\end{equation}
together with an extension to a diagram of the form
\[
\cdots\xrightarrow{} X_2 \xrightarrow{q_2} E_2  
 \xrightarrow{i_1} X_1 \xrightarrow{q_1} E_1 
 \xrightarrow{i_0} X_0 \xrightarrow{q_0} E_0,
\]
where each sequence $E_{n+1}\xrightarrow{i_{n}} X_{n} \xrightarrow{q_{n}} E_n$ 
is a cofiber sequence. (We will often specify
explicitly only the diagram~\eqref{eq: basic diagram}, with the
extended diagram being implicitly understood.)
\item
A cofiber sequence of spectra $X\rightarrow Y\xrightarrow{q} Z$ is
called ``short exact'' if the map $\Omega^\infty q$ has a homotopy section, 
i.e., if there exists a map  
$f: \Omega^\infty Z \rightarrow \Omega^\infty Y$ 
in the homotopy category such that the 
composed map $(\Omega^\infty q)\circ f$ is a weak homotopy equivalence. 
A chain complex is called ``exact'' if each of the cofiber
sequences $E_{n+1}\rightarrow X_n \rightarrow E_n$ is short exact. 
\end{enumerate}
\end{definition}

\begin{remark}
Elementary diagram chasing establishes a bijective
  correspondence (up to a suitable notion of homotopy equivalence)
  between chain complexes over $E_0$ and filtrations of the spectrum
  $E_0$, i.e., diagrams of spectra
\[
F_0\longrightarrow F_1 \longrightarrow F_2\longrightarrow 
         \cdots\longrightarrow E_0\simeq\hocolim F_n.
\]
(See~\cite{KuhnSpacelike}, Remark 2.2.)  To associate a chain complex
to a filtration of this form, set $X_n=\Sigma^{-n} F_n/F_{n-1}$ and
$E_n=\Sigma^{-n} E_0/ F_{n-1}$. For the other direction, suppose we
are given a chain complex as in~\eqref{eq: basic diagram}. This
structure provides maps $E_n\rightarrow \Sigma E_{n+1}$ for each
$n$, and hence maps $E_0 \longrightarrow \Sigma^{n+1} E_{n+1}$. Define
$F_n$ to be the homotopy fiber of this map.
\end{remark}

The following lemma establishes an alternative criterion for a 
chain complex to be exact.

\begin{lemma}  \label{lemma: contracting homotopy}
A chain complex of spectra
\[
\cdots\xrightarrow{\partial_{n+1}} X_{n}
      \xrightarrow{\ \partial_n\ } X_{n-1}
      \cdots
      \xrightarrow{\partial_{1}} X_{0}
      \xrightarrow{\partial_{0}} X_{-1}=E_{0}
\]
is exact if and only if for $n\geq -1$ there exist maps 
$\epsilon_{n}: \Omega^\infty X_n\rightarrow \Omega^\infty X_{n+1}$
in the homotopy category such that for all $n\geq -1$ the map
\begin{equation}\label{eq: contracting homotopy}
\epsilon_{n-1}\circ \left(\Omega^\infty\partial_n \right)
        +\left(\Omega^\infty\partial_{n+1} \right)\circ \epsilon_n
\end{equation} 
is a weak homotopy equivalence. 
\end{lemma}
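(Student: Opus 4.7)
The plan is to prove both directions by applying $\Omega^\infty$ to each cofiber sequence $E_{n+1}\xrightarrow{i_n} X_n\xrightarrow{q_n} E_n$ of spectra (where $\partial_n = i_{n-1}\circ q_n$), thereby producing a fiber sequence of grouplike H-spaces. The key structural input is that any homotopy section of $\Omega^\infty q_n$ yields a product splitting $\Omega^\infty X_n\simeq \Omega^\infty E_{n+1}\times \Omega^\infty E_n$.

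For the direction asserting that exactness implies existence of the contracting homotopy, choose a homotopy section $\sigma_n\colon \Omega^\infty E_n\to \Omega^\infty X_n$ of each $\Omega^\infty q_n$. The resulting splitting gives a projection $p_n\colon \Omega^\infty X_n\to \Omega^\infty E_{n+1}$ satisfying both $\Omega^\infty i_n\circ p_n+\sigma_n\circ \Omega^\infty q_n\simeq \mathrm{id}$ and $p_{n-1}\circ \Omega^\infty i_{n-1}\simeq \mathrm{id}$. Setting $\epsilon_{-1}=\sigma_0$ and $\epsilon_n=\sigma_{n+1}\circ p_n$ for $n\geq 0$, these identities combined with $\Omega^\infty q_n\circ \sigma_n\simeq \mathrm{id}$ and $\partial_n = i_{n-1}\circ q_n$ give
\[
\epsilon_{n-1}\circ \Omega^\infty\partial_n + \Omega^\infty\partial_{n+1}\circ \epsilon_n \simeq \sigma_n\circ \Omega^\infty q_n + \Omega^\infty i_n\circ p_n \simeq \mathrm{id},
\]
which is certainly a weak equivalence.

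For the converse, set $\rho_n = \epsilon_{n-1}\circ \Omega^\infty\partial_n + \Omega^\infty\partial_{n+1}\circ \epsilon_n$, a self weak equivalence of $\Omega^\infty X_n$ by assumption. Using $\partial_{n+1}=i_n\circ q_{n+1}$ and $q_n\circ i_n\simeq 0$, one checks $\Omega^\infty q_n\circ \rho_n\simeq \beta_n\circ \Omega^\infty q_n$ and $\rho_n\circ \Omega^\infty i_n\simeq \Omega^\infty i_n\circ \beta_{n+1}$, where $\beta_n := \Omega^\infty q_n\circ \epsilon_{n-1}\circ \Omega^\infty i_{n-1}$ is a self-map of $\Omega^\infty E_n$. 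Thus $(\beta_{n+1},\rho_n,\beta_n)$ is an endomorphism of the fiber sequence $\Omega^\infty E_{n+1}\to \Omega^\infty X_n\to \Omega^\infty E_n$ with middle term a weak equivalence. The contracting homotopy condition at $n=-1$ (with $i_{-1}=\mathrm{id}_{E_0}$ and $\epsilon_{-2}$ interpreted as zero) directly states that $\beta_0 = \Omega^\infty q_0\circ \epsilon_{-1}$ is a weak equivalence. Induction on $n$, using the five lemma applied to the long exact homotopy sequences of the fiber sequences, then shows every $\beta_n$ is a weak equivalence. Consequently $\epsilon_{n-1}\circ \Omega^\infty i_{n-1}\circ \beta_n^{-1}$ is a homotopy section of $\Omega^\infty q_n$, proving short exactness.

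The main delicate point will be the first direction, specifically the passage from a homotopy section in the pointed homotopy category to the product splitting that underwrites the identities $\Omega^\infty i_n\circ p_n+\sigma_n\circ \Omega^\infty q_n\simeq \mathrm{id}$ and $p_n\circ \Omega^\infty i_n\simeq \mathrm{id}$. I would handle this by invoking the standard fact that an H-fibration of grouplike H-spaces admitting a homotopy section splits as a product in the homotopy category, rather than rederiving the retraction by hand.
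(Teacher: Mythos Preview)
Your proof is correct and follows essentially the same route as the paper: in the forward direction you build $\epsilon_n$ from the product splitting induced by the sections $\sigma_n$ exactly as the paper does (its $\epsilon_n$ is precisely your $\sigma_{n+1}\circ p_n$), and in the converse you flesh out the ``inductive argument'' the paper only names, with the minor refinement that your candidate section $\epsilon_{n-1}\circ\Omega^\infty i_{n-1}$ is corrected by the self-equivalence $\beta_n^{-1}$ rather than asserted to be a section outright.
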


The maps $\epsilon_n$ will be called a ``contracting homotopy'' for
the chain complex.  Note that if a chain complex is exact, then the
homotopy spectral sequence for the associated filtration collapses 
at the second page.

\begin{proof}[Proof of Lemma~\ref{lemma: contracting homotopy}]
Suppose first that the chain complex is exact. Thus for each $n$ 
there is a map 
\[
f_{n}:\Omega^\infty E_n\longrightarrow \Omega^\infty X_n
\] 
that is a homotopy section of the structure map 
\[
\Omega^\infty q_{n}:\Omega^\infty X_{n}\longrightarrow \Omega^\infty E_n.
\] 
For each $n$, we have the following homotopy commutative diagram, 
where the rows are fibration sequences:
\[
\begin{CD} 
\Omega^\infty E_{n+1}@>>>\Omega^\infty E_{n+1}\times \Omega^\infty E_{n} 
       @>{\rm{proj}}>> \Omega^\infty E_n \\
@V=VV @VV{\Omega^\infty i_{n} + f_{n}}V @VV=V \\
\Omega^\infty E_{n+1} @>{\Omega^{\infty}i_{n}}>> \Omega^\infty X_n 
                      @>{\Omega^{\infty}q_{n}}>> \Omega^\infty E_n.
\end{CD}
\]
The middle vertical map must be an equivalence, and we can define
the map $\epsilon_{n}$ to be the composite 
\[
\Omega^\infty X_n 
   \xrightarrow{(\Omega^\infty i_n + f_n)^{-1}}
   \Omega^\infty E_{n+1}\times \Omega^\infty E_{n} 
   \xrightarrow{\rm{proj}} \Omega^\infty E_{n+1} 
   \xrightarrow{f_{n+1}}\Omega^\infty X_{n+1}.
\]
Elementary diagram chasing establishes that the maps $\epsilon_n$ 
satisfy~\eqref{eq: contracting homotopy}.

Conversely, suppose that we have a chain complex endowed with maps
$\epsilon_n$ satisfying~\eqref{eq: contracting homotopy}. We define
maps $f_n:\Omega^\infty E_n \rightarrow \Omega^\infty X_n$ by
$f_n=\epsilon_{n-1}\circ (\Omega^\infty i_{n-1})$.  An inductive
argument shows that $f_n$ is a homotopy section of 
$\Omega^\infty q_n$.
\end{proof}

Having established the terminology and notation for chain complexes of
spectra, we now turn to the auxiliary spectra $M(k)$ used by Kuhn and
Priddy.  In studying the filtration of $\HZ$ by $\Sp^{m}(\Sphere)$ at
a particular prime $p$, one can focus on values of $m$ that are powers
of $p$, because otherwise
$\Sp^{m-1}(\Sphere)\rightarrow\Sp^{m}(\Sphere)$ is an equivalence at
the prime $p$.
In point of fact, Kuhn and Priddy work with a version of the symmetric
power filtration that is reduced mod $p$, as follows. 
Let $D(k)$ be the cofiber of the $p$-fold diagonal map 
$\Sp^{p^{k-1}}(\Sphere)\rightarrow\Sp^{p^{k}}(\Sphere)$. 
The filtration of $\HZ$ by symmetric powers of the sphere spectrum
fits into a diagram with a filtration of $\HZ/p$ by the spectra
$D(k)$ as follows:
\begin{equation}    \label{eq: symm power setup}
\begin{CD}
*=\Sp^{0}(\Sphere) @>>> \Sp^{1}(\Sphere) @>>> \Sp^{p}(\Sphere) 
                    @>>> \ \dots\   @>>> \HZ \\
@VVV        @VV{\Delta}V         @VV{\Delta}V 
                          @.         @VV{\times p}V\\
\Sphere=\Sp^{1}(\Sphere) @>>> \Sp^{p}(\Sphere) @>>> \Sp^{p^{2}}(\Sphere) 
                    @>>> \ \dots\ @>>> \HZ \\
@VVV        @VVV       @VVV              @.     @VVV\\
\Sphere=:D(0) @>>> D(1)  @>>> D(2) 
                    @>>>\ \dots\ @>>> \HZ/p.
\end{CD}
\end{equation}
Here the horizontal maps between symmetric powers are induced by
basepoint inclusions, while the vertical maps are $p$-fold diagonal
maps, and all spectra are implicitly localized at $p$. The columns
are by definition homotopy cofiber sequences. To discuss the 
associated chain complexes, define
$L(k):=\Sigma^{-k}\Sp^{p^{k}}(\Sphere)/\Sp^{p^{k}-1}(\Sphere)$
and $M(k):=\Sigma^{-k}D(k)/D(k-1)$.
(At the beginning of the sequence, $L(0)=M(0)=\Sphere$.)

\begin{remark}    \label{remark: ambiguity}
  Recall once again that for $k>0$ the maps
  $\Sp^{p^{k-1}}(\Sphere)\rightarrow \Sp^{p^k-1}(\Sphere)$
  are equivalences at $p$.  Hence
  the spectra $\Sp^{p^k}(\Sphere)/\Sp^{p^{k-1}}(\Sphere)$ and
  $\Sp^{p^k}(\Sphere)/\Sp^{p^k-1}(\Sphere)$ are equivalent after localization
  at $p$, and either one could be used to define $L(k)$. In fact, for
  $k>0$, the difference between the two spectra is that the latter is
  $p$-local, while the former is not, so the latter is the
  $p$-localization of the former.
\end{remark}

Each row in \eqref{eq: symm power setup} now gives us a chain complex,
\begin{equation}    \label{eq: symm power chain complex}
\begin{CD}
\dots @>>> \Sigma^{-1}L(1) @>>> \Sigma^{-1}L(0) @>>> * @>>> \HZ\\
@. @VVV        @VVV         @VVV  @VV{\times p}V\\
\dots @>>> L(2) @>>> L(1) @>>> L(0) @>>> \HZ \\
@. @VVV        @VVV       @VVV             @VVV\\
\dots @>>> M(2) @>>> M(1) @>>> M(0) @>>> \HZ/p,
\end{CD}
\end{equation}
and again each column is a homotopy cofiber sequence. In fact, 
it turns out that this cofiber sequence splits. 

\begin{proposition}\cite[Proposition 5.15]{Mitchell-Priddy}
  \label{prop: null map classical} 
$M(k)$ splits as a wedge sum:
\[
M(k)\simeq L(k)\vee L(k-1).
\]
\end{proposition}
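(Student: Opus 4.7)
The plan is to show that the connecting map $\delta\colon \Sigma^{-1}L(k-1)\to L(k)$ in the column cofiber sequence $\Sigma^{-1}L(k-1)\to L(k)\to M(k)$ of diagram~\eqref{eq: symm power chain complex} is null-homotopic; standard cofiber-sequence manipulations will then yield the desired splitting $M(k)\simeq L(k)\vee L(k-1)$.

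My approach is to exploit the description of both $L(k)$ and $M(k)$ as Steinberg-type wedge summands. The paper has already noted that $M(k)$ is split off from $\Sigma^{\infty}(B\Delta_{k})_+$ by the Steinberg idempotent $e_k\in\field_p[\GL_k(\field_p)]$, where $\Delta_k\cong(\integers/p)^k$ is the transitive elementary abelian $p$-subgroup of $\Sigma_{p^k}$; similarly, $L(k)$ arises as the corresponding Steinberg summand of a Thom spectrum over $B\Delta_k$ (the Thom spectrum of minus the adjoint/reduced regular representation, shifted by $-k$). Under these identifications, $\delta$ should correspond to a map between Steinberg summands induced by the standard inclusion $\Delta_{k-1}\hookrightarrow\Delta_k$, composed with the $p$-fold diagonal that defines the vertical arrows in \eqref{eq: symm power setup}.

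To show $\delta$ is null, I would argue that the induced map factors, after projection by the two Steinberg idempotents, through an induction from a proper parabolic subgroup of $\GL_k(\field_p)$: the image of $\GL_{k-1}(\field_p)$ under the block embedding sits inside such a parabolic. The Steinberg module is the top cohomology of the Tits building and is annihilated by induction from (or restriction to) any proper parabolic subgroup. Consequently the composite is zero on mod~$p$ homology. Because both $L(k-1)$ and $L(k)$ are projective in Kuhn's sense (and their homotopy is $p$-local with mod~$p$ cohomology detecting maps in the relevant dimensions), the homological vanishing can be upgraded to an honest null-homotopy of $\delta$.

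The main obstacle will be verifying precisely that $\delta$ indeed translates, under the Steinberg-summand identifications, into the parabolic-induction map one wants to annihilate. This requires tracking carefully how the $p$-fold diagonal $\Sp^{p^{k-1}}(\Sphere)\to\Sp^{p^k}(\Sphere)$ interacts with the stable splitting of $B\Sigma_{p^k+}$ coming from the filtration by symmetric powers, and how the iterated wreath-product embeddings $\Delta_{k-1}\wr(\integers/p)\hookrightarrow\Delta_k\hookrightarrow\Sigma_{p^k}$ carry the Steinberg idempotents of $\GL_{k-1}(\field_p)$ and $\GL_k(\field_p)$ in a compatible way. Once this bookkeeping is in place, the Steinberg-module vanishing finishes the argument.
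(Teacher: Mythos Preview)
The paper does not prove this proposition; it is stated with a citation to Mitchell--Priddy \cite[Proposition~5.15]{Mitchell-Priddy} and used as a known input. There is therefore no proof in the paper to compare your proposal against.

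Your sketch is in the spirit of the original Mitchell--Priddy argument, but one step is not justified as written. You assert that because $L(k-1)$ and $L(k)$ are projective in Kuhn's sense, ``the homological vanishing can be upgraded to an honest null-homotopy of $\delta$.'' Projectivity in Kuhn's sense means only that the spectrum is a stable wedge summand of a suspension spectrum; it does not by itself imply that maps between such spectra are detected on mod~$p$ homology. To pass from vanishing on $H_*(-;\field_p)$ to a genuine null-homotopy you would need an additional argument that $[\Sigma^{-1}L(k-1), L(k)]$ injects into the corresponding group of homology maps, and that is a nontrivial extra input. In practice Mitchell--Priddy avoid this issue by producing the splitting at the level of spectra (via explicit idempotents and transfers), not by first proving a homology statement and then lifting it.

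It may interest you that the paper's proof of the analogous $bu$ statement, Proposition~\ref{prop: null map}, takes an entirely different route from the one you outline: it works with universal spaces for collections of subgroups of $U(p^k)$ and shows the relevant map is null by factoring it through a contractible orbit space (Lemma~\ref{lemma: inclusion null} and the surrounding discussion of diagram~\eqref{diagram: ladder}), with no appeal to homology or Steinberg-module vanishing. A parallel argument with collections of subgroups of $\Sigma_{p^k}$ would give an alternative, geometry-flavored proof of the classical splitting.
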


Another important feature of the chain complexes of 
diagram~\eqref{eq: symm power chain complex} is that each spectrum
$M(k)$ is projective, again from
Mitchell and Priddy's earlier work.  This is important in Kuhn and
Priddy's proof because they use the standard homological technique of
inducing a map from a projective complex to an acyclic complex.
Let $\Delta_{k}$ be a transitive elementary abelian $p$-subgroup 
of $\Sigma_{p^{k}}$. ($\Delta_{k}$ is unique up to conjugacy.) 

\begin{proposition}\cite[Theorem 5.1]{Mitchell-Priddy}
    \label{prop: M projective}
  $M(k)$ splits off from
  $\Sigma^{\infty}\left(B\Delta_{k}\right)_{+}$ as the stable wedge
  summand corresponding to the Steinberg idempotent 
  in $\field_{p}\left[GL_{k}(\field_{p})\right]$. 
\end{proposition}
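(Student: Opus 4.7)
The plan is to follow the general strategy of Mitchell--Priddy: first identify $M(k)$ with an equivariant subquotient of a symmetric-product construction, then exhibit it as a stable summand of $\Sigma^{\infty}(B\Delta_k)_+$ via a transfer argument, and finally cut out the Steinberg piece using the classical Steinberg idempotent of $\field_p[\GL_k(\field_p)]$.

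First, I would reinterpret the filtration subquotients using the standard description of $\Sp^{m}(\Sphere)/\Sp^{m-1}(\Sphere)$ as an equivariant half-smash built from configuration-type data on which $\Sigma_m$ acts freely away from the deleted diagonal. Applied to the diagonal tower in \eqref{eq: symm power setup}, this description presents $D(k)$ as a homotopy orbit spectrum under $\Sigma_{p^{k}}$ of the suitable quotient of $(\Sphere)^{\wedge p^{k}}$ by the image of the $p$-fold diagonal coming from $\Sigma_{p^{k-1}}$. Taking one more cofiber to form $M(k)=\Sigma^{-k}D(k)/D(k-1)$ should pick out exactly the contribution of configurations whose isotropy is \emph{genuinely new at level} $k$, i.e.\ supported on a transitive elementary abelian $p$-subgroup of rank $k$ in $\Sigma_{p^k}$, which up to conjugacy is $\Delta_k$.

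Next, I would invoke a tom Dieck--style splitting of the homotopy-orbit spectrum by conjugacy classes of isotropy. The non-cancelling summand at the level $M(k)$ corresponds to the class of $\Delta_k$, and standard identifications give it as a stable summand of $\Sigma^{\infty}(BN_{\Sigma_{p^k}}(\Delta_k)/\Delta_k)_+\simeq \Sigma^{\infty}(B\GL_k(\field_p))_+$-parametrized pieces. Pulling back along $B\Delta_k\rightarrow B(\Delta_k\rtimes\GL_k(\field_p))$ and using the transfer, one obtains a splitting map $\Sigma^{\infty}(B\Delta_k)_+\rightarrow M(k)$ with a section, so that $M(k)$ sits as a stable summand of $\Sigma^{\infty}(B\Delta_k)_+$ on which the action of $\GL_k(\field_p)$ (through the mod-$p$ Weyl group of $\Delta_k$) is identified with the natural one. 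Finally, applying the Steinberg idempotent $e_k\in\field_p[\GL_k(\field_p)]$ to $\Sigma^{\infty}(B\Delta_k)_+$ and comparing mod-$p$ homology through a Dickson-invariants computation identifies $e_k\,\Sigma^{\infty}(B\Delta_k)_+$ with $M(k)$ up to $p$-completion.

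The principal obstacle is the first step: faithfully identifying the subquotient $D(k)/D(k-1)$ as a homotopy orbit spectrum over $\Delta_k$ whose residual automorphisms assemble into the full $\GL_k(\field_p)$-action. This requires careful bookkeeping of which configurations are ``absorbed'' by the diagonal from level $k-1$ and which appear genuinely at level $k$, and an equivariant version of the splitting needs to be set up with enough naturality that the $\GL_k(\field_p)$ action is visible, so that the Steinberg idempotent can be applied. Once that identification is in place, the idempotent-splitting step is standard and the transfer produces the desired wedge summand.
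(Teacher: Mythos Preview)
The paper does not give its own proof of this proposition: it is stated as a citation of \cite[Theorem~5.1]{Mitchell-Priddy} and used as a black box. There is therefore nothing in the paper to compare your argument against.

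As for the proposal on its own terms, there is a genuine gap in the second step. You invoke a ``tom Dieck--style splitting of the homotopy-orbit spectrum by conjugacy classes of isotropy,'' but the tom Dieck splitting concerns the \emph{genuine} $G$-fixed points of an equivariant suspension spectrum, not homotopy orbits. Homotopy orbit spectra do not in general decompose as wedges indexed by isotropy types; one has at best an orbit-type filtration whose subquotients involve the various isotropy groups, and that filtration need not split. So the passage from ``the non-cancelling summand at level $M(k)$ corresponds to the class of $\Delta_k$'' to an actual wedge summand of $\Sigma^{\infty}(B\Delta_k)_+$ is not justified by the mechanism you name. The Mitchell--Priddy argument instead builds explicit maps in both directions using transfers and inclusions associated to $\Delta_k\subset\Sigma_{p^k}$, and then identifies $M(k)$ with the Steinberg summand via a mod~$p$ homology computation; the splitting is produced by the idempotent acting on $\Sigma^{\infty}(B\Delta_k)_+$, not by an isotropy decomposition of a homotopy orbit spectrum.
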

%


To define the $bu$-analogue of this setup, we follow the same template
with different ingredients.  Recall that the construction
of~\cite{A-L}, applied to finite pointed sets and taken at powers of
$p$, gives exactly the filtration
\[
\Sphere=\Sp^{1}(\Sphere)
    \rightarrow \Sp^{p}(\Sphere)\rightarrow\Sp^{p^{2}}(\Sphere)
    \rightarrow\dots\rightarrow \HZ,
\]
while applying it to finite-dimensional complex vector spaces gives a
filtration
\[
bu=A_{0}\rightarrow A_{1} \rightarrow A_{p} 
    \rightarrow\dots\rightarrow \HZ.
\]
The $bu$-analogue of \eqref{eq: symm power setup} retains the top row,
and replaces the middle row by the spectra $A_{p^k}$. The map between
the first row and the second row is induced by the natural map of
categories from finite pointed sets to finite-dimensional complex
vector spaces, and we define spectra $C(k)$ as the vertical cofibers,
indexed logarithmically, by analogy\footnote{Note that since the
  sequence $\{C(k)\}$ is indexed logarithmically and the sequence
  $\{A_{p^{k}}\}$ is not, we have an initial term $C(-\infty)$,
  corresponding to $\log_{p}0=-\infty$.}  to the spectra $D(k)$
of~\eqref{eq: symm power setup}:
\begin{equation}    \label{eq: bu setup}
\begin{CD}
*        @>>> \Sp^{1}(\Sphere) @>>> \Sp^{p}(\Sphere) @>>> \dots @>>> \HZ\\
@VVV           @VVV                      @VVV             @.         @VVV\\
bu=A_{0} @>>> A_{1}            @>>> A_{p}          @>>> \dots @>>> \HZ\\
@VVV           @VVV                      @VVV             @.         @VVV\\
bu=:C(-\infty) @>>> C(0)             @>>> C(1)           @>>> \dots @>>> *
\end{CD}
\end{equation}

We now pass to subquotients with the following notation:
\begin{align*}
L(k)&:=\Sigma^{-k}\Sp^{p^{k}}(\Sphere)/\Sp^{p^k-1}(\Sphere)\\
T(k)&:=\Sigma^{-(k+1)}A_{p^{k}}/A_{p^k-1}\\
\Mbar(k) &:=\Sigma^{-(k+1)}C(k)/C(k-1).
\end{align*}
(Note that $A_{p^{k-1}}\rightarrow A_{p^k-1}$ is an equivalence at $p$,
and the appropriate variation of Remark~\ref{remark: ambiguity} applies to 
the definition of $T(k)$.)
Thus we obtain the following diagram of chain
complexes, where all spectra are implicitly localized at $p$, and in
fact, all spectra except for the rightmost three columns were
$p$-local from the outset.
\begin{equation}    \label{eq: bu chain complexes}
\begin{CD}
\dots @. \Sigma^{-1}L(2)@>>> \Sigma^{-1}L(1) @>>> \Sigma^{-1}L(0) 
     @>>> * @>>> \HZ\\
@.    @VVV    @VVV    @VVV         @VVV  @VV{\simeq}V\\
\dots @. T(2) @>>> T(1) @>>> T(0) @>>> bu @>>> \HZ \\
@. @VVV @VVV       @VVV       @VVV             @VVV\\
\dots @. \Mbar(2) @>>> \Mbar(1) @>>> \Mbar(0) @>>> bu @>>> *
\end{CD}
\end{equation}
(At the beginning of the sequence, $L(0)=\Sphere$,
$ T(0)=\Sigma^{\infty}\complexes P^{\infty}$, and 
%
%
$\Mbar(0)
    \simeq \Sigma^{\infty}\complexes P^{\infty}_{+}
    \simeq T(0)\vee L(0)$.)
The conjecture that corresponds to 
the exactness of \eqref{eq: mod p Whitehead}, which was the main 
computation performed in \cite{Kuhn-Priddy}, is the following. 

\begin{conjecture}[Reduced $bu$ Whitehead Conjecture]
\label{conjecture: bu Whitehead reduced}
\buWhiteheadConjectureText
\end{conjecture}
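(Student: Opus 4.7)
My plan is to mirror Kuhn and Priddy's proof of the mod-$p$ Whitehead Conjecture, substituting at each step the $bu$-analogues established earlier in this section. By Lemma~\ref{lemma: contracting homotopy}, it suffices to construct a contracting homotopy: for each $k\ge -1$ a map $\epsilon_k : \Omega^\infty \Mbar(k) \to \Omega^\infty \Mbar(k+1)$ (with $\Mbar(-1) := bu$) in the stable homotopy category such that $\epsilon_{k-1}\circ\Omega^\infty\partial_k+\Omega^\infty\partial_{k+1}\circ\epsilon_k$ is a weak equivalence for every $k\ge -1$. The essential leverage is Proposition~\ref{prop: Mbar projective}, which presents $\Mbar(k)$ as the stable wedge summand of $\Sigma^\infty(B\Gamma_k)_+$ corresponding to the symplectic Steinberg idempotent in $\field_p[\Sp_{2k}(\field_p)]$; this projectivity makes it possible to build maps out of $\Mbar(k)$ from genuinely geometric maps of classifying spaces together with stable transfers.

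The construction itself would proceed from a choice of compatible inclusions $\Gamma_k\hookrightarrow\Gamma_{k+1}$, coming from the standard block-sum inclusion $U(p^k)\hookrightarrow U(p^{k+1})$, and from the associated stable transfer $\Sigma^\infty(B\Gamma_{k+1})_+ \to \Sigma^\infty(B\Gamma_k)_+$ (or its dual, according to which direction the chain differentials in \eqref{eq: Mbar complex exact} turn out to point after passage to Steinberg summands). A Mackey-style double coset computation should then reduce the contracting-homotopy identity to a calculation inside the group algebra $\field_p[\Sp_{2k}(\field_p)]$, exactly parallel to the $\field_p[\GL_k(\field_p)]$-computation underpinning \eqref{eq: mod p Whitehead}. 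A more structural alternative would exploit the splitting $\Mbar(k)\simeq T(k)\vee L(k)$ from Proposition~\ref{prop: null map}: if the differentials of $\{\Mbar(k)\}$ can be arranged to make the vertical cofibre sequences $\Sigma^{-1}L(k)\to T(k)\to\Mbar(k)$ into a short exact sequence of chain complexes, then exactness of $\{\Mbar(k)\}$ would follow from the classical Whitehead theorem together with the unreduced $bu$ Whitehead Conjecture~\ref{conjecture: bu Whitehead}, identifying the two versions of the $bu$ Whitehead conjecture.

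I expect the main obstacle to be the Mackey-style analysis of the symplectic Steinberg idempotent: the projective elementary abelian group $\Gamma_k$ does not sit inside its normalizer in $U(p^k)$ as conveniently as $\Delta_k$ does inside $\Sigma_{p^k}$, and the representation theory of $\Sp_{2k}(\field_p)$ is noticeably more intricate than that of $\GL_k(\field_p)$; consequently the calculation establishing that a candidate composite acts as the identity on the Steinberg summand is substantially more delicate than the $\GL$-case carried out by Kuhn and Priddy. In the structural approach, the corresponding obstacle is the strict compatibility of the boundary maps in the vertical cofibre sequences with the horizontal differentials, which would presumably require a model-level refinement of the construction of $\Mbar(k)$ via the modified stable rank filtration of Corollary~\ref{cor: A-L vis Rognes, revisited}.
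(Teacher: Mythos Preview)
The statement you are attempting to prove is labeled in the paper as a \emph{Conjecture}, not a theorem, and the paper contains no proof of it. The authors explicitly present Conjecture~\ref{conjecture: bu Whitehead reduced} as open; what the paper offers in its stead is \emph{evidence}, gathered in Section~\ref{section: calculus evidence}. That evidence is of a rather different flavor from your proposal: the authors observe that the Weiss tower of $V\mapsto BU(V)$ evaluated at $\complexes^{0}$ has $k$-th layer $\Omega^{k-1}\Omega^{\infty}\Mbar(k)$, and they conjecture (but do not prove) that suitable deloopings of the connecting maps in that tower furnish the contracting homotopy required by Lemma~\ref{lemma: contracting homotopy}. They do not attempt a Kuhn--Priddy style transfer argument at all.

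Your proposal is therefore not a proof to be compared against the paper's proof; it is a sketch of a possible attack on an open problem. As a sketch it is reasonable and well-informed---you correctly identify Proposition~\ref{prop: Mbar projective} as the analogue of the Mitchell--Priddy input, and you correctly flag the symplectic Steinberg calculation as the principal difficulty---but it is not close to a proof. The key step in Kuhn--Priddy is a specific double-coset identity in $\field_{p}[\GL_{k}(\field_{p})]$ that you have not even formulated in the symplectic setting, let alone verified; and your ``structural alternative'' via the splitting $\Mbar(k)\simeq T(k)\vee L(k)$ would, as you yourself note, reduce the reduced conjecture to the unreduced Conjecture~\ref{conjecture: bu Whitehead}, which is equally open. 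In short: there is no gap to name, because there is no proof here---only a plausible outline of where one might look, which the paper itself does not pursue.
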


In the next two subsections, we elaborate further on this conjecture
in two ways. In Section~\ref{sec: collections of subgroups}, 
we establish analogues of Propositions~\ref{prop: null map classical} 
and~\ref{prop: M projective}. These establish that the spectra
in Conjecture~\ref{conjecture: bu Whitehead reduced} are projective
in the sense of Kuhn, and that the structure of 
diagram~\eqref{eq: bu chain complexes} is exactly parallel to that
of diagram~\eqref{eq: symm power setup}. 
Then in Section~\ref{section: calculus evidence}, we offer 
evidence from the calculus of functors for the correctness
of Conjecture~\ref{conjecture: bu Whitehead reduced}. 

\subsection{\protect Collections of subgroups} 
\label{sec: collections of subgroups}

In this subsection, we explore further the properties of the spectra
$\Mbar(k)$ that establish them as appropriate analogues of the spectra
$M(k)$ used by Kuhn and Priddy in the proof of the mod~$p$ Whitehead
Conjecture. First, comparing \eqref{eq: symm power chain complex} to 
\eqref{eq: bu chain complexes} and looking at the columns, we see
that the spectra $L(k-1)$, $L(k)$, and $M(k)$ in the classical
situation correspond, respectively, to $L(k)$, $T(k)$, and $\Mbar(k)$
in the $bu$-analogue. Thus the result that corresponds to 
Proposition~\ref{prop: null map classical} 
is the following. 

\begin{proposition}     \label{prop: null map}
\NullMapPropositionText{\eqref{eq: bu chain complexes}}
\end{proposition}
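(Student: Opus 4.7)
The plan is to produce a splitting of the cofiber sequence $\Sigma^{-1}L(k)\to T(k)\to\Mbar(k)$ by exhibiting a section of the associated connecting map $\Mbar(k)\to L(k)$; this automatically forces the map $\Sigma^{-1}L(k)\to T(k)$ to be null-homotopic and yields the desired wedge splitting $\Mbar(k)\simeq T(k)\vee L(k)$. The strategy is to interpret both the target and the source as wedge summands of suspension spectra of classifying spaces via Steinberg-type idempotents, and to exhibit a section coming from an inclusion of the relevant groups.

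First, I would invoke Proposition~\ref{prop: Mbar projective} to identify $\Mbar(k)$ with the symplectic Steinberg summand of $\Sigma^{\infty}(B\Gamma_k)_{+}$ cut out by the symplectic Steinberg idempotent in $\field_{p}[\Sp_{2k}(\field_{p})]$. In parallel, combining the classical Mitchell--Priddy splitting $M(k)\simeq L(k)\vee L(k-1)$ (Proposition~\ref{prop: null map classical}) with the identification of $M(k)$ as the Steinberg summand of $\Sigma^{\infty}(B\Delta_{k})_{+}$ (Proposition~\ref{prop: M projective}), we may realize $L(k)$ itself as a wedge summand of $\Sigma^{\infty}(B\Delta_{k})_{+}$. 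The natural bridge between the two summands is the embedding $\Delta_{k}\hookrightarrow\Gamma_{k}$ that exhibits $\Delta_{k}$ as a maximal isotropic (Lagrangian) subgroup of the extraspecial $p$-group $\Gamma_{k}$ modulo its center; this is compatible with the permutation action of $\Delta_{k}\subset\Sigma_{p^{k}}$ on $\complexes^{p^{k}}$ used in defining the map $\Sp^{p^{k}}(\Sphere)\to A_{p^{k}}$ from which $\Sigma^{-1}L(k)\to T(k)$ is built.

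Using the theory of universal spaces of collections of subgroups developed in this subsection, I would then construct the candidate section $L(k)\to\Mbar(k)$ as the composite of the summand inclusion $L(k)\to\Sigma^{\infty}(B\Delta_{k})_{+}$, the map $\Sigma^{\infty}(B\Delta_{k})_{+}\to\Sigma^{\infty}(B\Gamma_{k})_{+}$ induced by the Lagrangian inclusion, and projection onto the symplectic Steinberg summand $\Mbar(k)$. The universal-space machinery is what allows one to identify the connecting map $\Mbar(k)\to L(k)$ in concrete Borel-construction terms and to track it through these idempotent projections.

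The main obstacle will be verifying that the composite $L(k)\to\Mbar(k)\to L(k)$ is a weak equivalence. This reduces to a compatibility statement between the classical Steinberg idempotent in $\field_{p}[\GL_{k}(\field_{p})]$ and the symplectic Steinberg idempotent in $\field_{p}[\Sp_{2k}(\field_{p})]$ under the Lagrangian-stabilizer inclusion $\GL_{k}(\field_{p})\hookrightarrow\Sp_{2k}(\field_{p})$. Such a compatibility should be accessible by fixed-point computations on the relevant collections of elementary abelian (and projective elementary abelian) $p$-subgroups, which is precisely the framework established earlier in this subsection and which also underwrites the proof of Proposition~\ref{prop: Mbar projective}.
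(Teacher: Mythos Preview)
Your approach is quite different from the paper's, and it has a real gap at the point you yourself flag as ``the main obstacle.''

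The paper proves the null-homotopy \emph{directly}, not by constructing a section. It builds a commutative ladder of orbit and homotopy-orbit spaces for the collections $\Rcal_{m}$ and $\Rcal_{m,\nontrivial}$ of standard subgroups of $U(m)$, and identifies the map $\Sigma^{-1}L(k)\to T(k)$ (after $(k{+}1)$-fold suspension) with the inclusion $B\Rcal_{p^{k},\nontrivial}^{\diamond}\to B\Rcal_{p^{k}}^{\diamond}$. This inclusion is shown to be null-homotopic by a short geometric argument: smashing with $S^{2m}$, the left-hand side becomes contractible (by a result from~\cite{A-L}), while the right-hand vertical map $S^{0}\to S^{2m}$ becomes an equivalence after passing to $U(m)$-orbits of $E\Rcal_{m}^{\diamond}\wedge(-)$. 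So the map factors through a point. No Steinberg idempotents, no section, and no dependence on Proposition~\ref{prop: Mbar projective}.

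By contrast, your plan routes everything through idempotent summands and a Lagrangian inclusion $\Delta_{k}\hookrightarrow\Gamma_{k}$, and then needs two nontrivial verifications that you do not carry out: (i) that the connecting map $\Mbar(k)\to L(k)$ really is, under the identifications of Propositions~\ref{prop: M projective} and~\ref{prop: Mbar projective}, the map induced by restriction along $\Delta_{k}\hookrightarrow\Gamma_{k}$ followed by the appropriate idempotent projections; and (ii) that the Steinberg and symplectic Steinberg idempotents are compatible under $\GL_{k}(\field_{p})\hookrightarrow\Sp_{2k}(\field_{p})$ in exactly the way required to make the composite $L(k)\to\Mbar(k)\to L(k)$ an equivalence. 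You assert that (ii) ``should be accessible by fixed-point computations,'' but this is precisely the content of the proposition and is not established. Step (i) is also delicate: the connecting map arises from a cofiber sequence of filtration quotients, and matching it with a specific transfer- or restriction-type map between classifying spaces requires tracking through the identifications carefully; you only say the universal-space machinery ``allows'' this without doing it. As written, then, your argument is a plausible outline for an alternative proof, but the decisive steps are missing.
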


Second, we show that $\Mbar(k)$ is projective,
corresponding to Proposition~\ref{prop: M projective}. In fact, it
turns out that to describe how $\Mbar(k)$ splits off of a suspension
spectrum, we can use the dictionary that is used in \cite{A-L}
between the identity functor and symmetric groups on the one hand,
and the functor $V\mapsto BU(V)$ and the unitary groups on the other.
As explained in \cite{A-L} Section 10, the analogue of the
transitive elementary abelian $p$-subgroup $\Delta_{k}$ of
$\Sigma_{p^{k}}$ is the irreducible projective elementary abelian
$p$-subgroup $\Gamma_{k}$ of $U(p^{k})$. Just as the Weyl group of
$\Delta_{k}$ in $\Sigma_{p^{k}}$ is $GL_{k}(\field_{p})$, the Weyl
group of $\Gamma_{k}$ in $U(p^{k})$ is the symplectic group
$\Sp_{2k}(\field_{p})$.  Thus the statement that corresponds to
Proposition~\ref{prop: M projective} is the following.

\begin{proposition}     \label{prop: Mbar projective}
\MbarProjectivePropositionText
\end{proposition}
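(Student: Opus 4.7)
The plan is to combine Proposition~\ref{prop: MBar is ho(Partition)}, which identifies
\[
\Mbar(k) \simeq S^{-k} \wedge \left(|\Lcal_{p^k}|^{\diamond}\right)_{\hobased U(p^k)},
\]
with the unitary/symplectic analogue of the Mitchell--Priddy identification of the classical partition complex used to prove Proposition~\ref{prop: M projective}. In the classical case, $\Sigma_{p^k}$ acts on $|\Lcal_{p^k}|^{\diamond}$ with $p$-local isotropy concentrated on the conjugacy class of the transitive elementary abelian $p$-subgroup $\Delta_k$, whose Weyl group $\GL_k(\field_p)$ acts by the Steinberg representation on the top cells, which produces the Steinberg summand of $\Sigma^{\infty}(B\Delta_k)_+$. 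I would carry out the analogous analysis with $\Sigma_{p^k}$ replaced by $U(p^k)$ and $\Delta_k$ replaced by the irreducible projective elementary abelian $p$-subgroup $\Gamma_k$ of $U(p^k)$.

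First I would invoke the machinery of collections of subgroups developed in Section~\ref{sec: collections of subgroups}, together with the subgroup analysis of $U(p^k)$ from \cite{A-L}, Section~10, to decompose $|\Lcal_{p^k}|^{\diamond}$ as a $U(p^k)$-space at the prime $p$. The key input is that, after $p$-localization, the relevant isotropy may be cut down to the collection of irreducible projective elementary abelian $p$-subgroups of $U(p^k)$, of which $\Gamma_k$ is (up to conjugacy) the unique transitive representative. This is the direct unitary analogue of the classical reduction, and it is precisely the kind of statement that the universal spaces of subgroup collections are designed to supply.

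Once the isotropy reduction is in place, the homotopy orbit construction produces a wedge summand of $\Sigma^{\infty}(B\Gamma_k)_+$ on which the Weyl group $W_{U(p^k)}(\Gamma_k) \cong \Sp_{2k}(\field_p)$ acts. A cell-count for the top stratum of $|\Lcal_{p^k}|^{\diamond}$, combined with the degree shift by $S^{-k}$, identifies the relevant component as the image of the symplectic Steinberg idempotent in $\field_p[\Sp_{2k}(\field_p)]$; this is the direct analogue of the appearance of the general linear Steinberg representation on the top cells of the partition-complex subquotients in the proof of Proposition~\ref{prop: M projective}. Projectivity in the sense of Kuhn is then immediate, since $\Mbar(k)$ is exhibited as a wedge summand of the suspension spectrum of a space.

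The main obstacle is the isotropy-reduction step: one must verify that, at the prime $p$, among subgroups of $U(p^k)$ with non-contractible fixed points on $|\Lcal_{p^k}|^{\diamond}$ only the conjugates of $\Gamma_k$ contribute, and that the normalizer quotient acts via the full symplectic Steinberg character on the top stratum. This is where the technical tools of Section~\ref{sec: collections of subgroups} about universal spaces of collections, together with the detailed Weyl-group calculations in \cite{A-L} identifying $W_{U(p^k)}(\Gamma_k)$ with $\Sp_{2k}(\field_p)$, must be assembled carefully. Once both are in hand, the classical Mitchell--Priddy template applies essentially verbatim.
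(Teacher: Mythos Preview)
Your overall strategy is correct and matches the paper's: identify $\Mbar(k)$ with $S^{-k}\wedge (|\Lcal_{p^k}|^\diamond)_{\hobased U(p^k)}$, and then recognize this spectrum as the symplectic Steinberg summand of $\Sigma^\infty(B\Gamma_k)_+$.

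Two remarks. First, there is a small circularity in your write-up: the identification you cite as ``Proposition~\ref{prop: MBar is ho(Partition)}'' is not a separately established result in the paper; it is equation~\eqref{eq: MBar is ho(Partition)}, derived \emph{inside} this very proof. The derivation is short---one reads off from the pushout square~\eqref{diagram: spectrum pushout square} together with~\eqref{eq: T(k)} and~\eqref{eq: L(k)} a cofiber sequence $(|\Lcal_m|^\diamond)_{\hobased U(m)}\to \Sp^m(\Sphere)/\Sp^{m-1}(\Sphere)\to A_m/A_{m-1}$, and then compares with the definition of $\Mbar(k)$---but you should include it rather than cite it.

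Second, for the identification of $S^{-k}\wedge (|\Lcal_{p^k}|^\diamond)_{\hobased U(p^k)}$ with the symplectic Steinberg summand, the paper simply invokes \cite{A-L}, Proposition~10.3 and its proof, where the isotropy reduction and Weyl-group analysis you sketch are carried out in full. Your outline of that argument is accurate in spirit, but there is no need to reprove it here.
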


The objects in Propositions~\ref{prop: null map}
and~\ref{prop: Mbar projective} are defined in terms of subquotients
of filtrations obtained by the constructions of~\cite{A-L}. Since that
work identifies such subquotients in terms of collections of subgroups
of certain automorphism groups, we devote much of this subsection 
to related calculations with universal spaces.
The proofs of Propositions~\ref{prop: null map}
and~\ref{prop: Mbar projective} appear towards the end of the
subsection. 

We begin by recalling some essential background. (A good reference is 
\cite{A-D} Section~2.) Given a group $G$, a
``collection'' $\Ccal$ of subgroups of $G$ is a set of subgroups of
$G$ that is closed under conjugation by elements of $G$.
There is a universal $G$-space
$E\Ccal$, terminal among $G$-spaces whose isotropy groups are in
$\Ccal$, which can be characterized by the following two properties:
{\emph{(i)}} all isotropy groups of $E\Ccal$ are in $\Ccal$, and
{\emph{(ii)}} 
if $H\in\Ccal$, then $(E\Ccal)^{H}\simeq *$. The space
   $E\Ccal$ has a standard construction as the nerve of the category
   whose objects are pairs $(O,x)$, where $O$ is a transitive $G$-set
   with isotropy in $\Ccal$, and $x\in O$.  If we think of $\Ccal$ as
   a poset under inclusions and write $|\Ccal|$ for the nerve of the
   corresponding category, then there is a $G$-equivariant map
   $E\Ccal\rightarrow |\Ccal|$ given by taking $(O,x)$ to the isotropy
   group of $x$, and by \cite{A-D}~2.12 that map is $G$-equivariant 
   and a homotopy equivalence (but not necessarily a $G$-equivalence).

We are interested in particular collections of subgroups
of the unitary group $U(m)$. These collections were studied in 
\cite{A-L} Section~9. 
\begin{definition}   \label{defn: subgroup types}
Let $H$ be a subgroup of $U(m)$.
\begin{enumerate}
\item We say $H$ is ``standard''
if $H$ is conjugate to a proper subgroup of the form
$\prod_{i=1}^{s} U(m_{i})$, where $U(m_{i})$ is the group of automorphisms 
of a subspace $\complexes^{m_i}\subset \complexes^m$. \\
---The collection of standard subgroups of $U(m)$ is denoted $\Rcal_{m}$.
\item We say that a standard subgroup $H$ is ``complete'' if it is a proper 
subgroup conjugate to one of the form
$\prod_{i=1}^{s} U(m_{i})$ where $\sum m_{i}=m$.\\
---The collection of complete subgroups of $U(m)$ is denoted $\Lcal_{m}$.
We observe that $\Lcal_{m}$ is equivalent to the poset
of proper direct sum decompositions of $\complexes^{m}$. 
\end{enumerate}
\end{definition}
Note that all complete subgroups are standard; $U(m)$ itself is neither
standard nor complete; the trivial subgroup is standard but is not complete.

The heart of the proofs of 
Propositions~\ref{prop: null map} and~\ref{prop: Mbar projective}
is a commutative ladder involving both homotopy orbit spaces and 
strict orbit spaces of universal spaces of collections of subgroups. 
Let $X^{\diamond}$ denote the unreduced suspension of a space $X$ and,
if $X$ is a $G$-space with a based action, let 
$X_{\hobased G}$ be the based (reduced) homotopy orbits, 
$X_{\hobased G}:= EG_{+}\wedge_{G}X$. 
Let $\Rcal_{m,\nontrivial}$ be the subcollection of nontrivial standard
subgroups of $U(m)$.  By using the map $S^{0}\rightarrow S^{2m}$ that
includes $S^{0}$ as the poles of $S^{2m}$, and the passage from homotopy
orbits to actual orbits, we can construct the following commutative ladder:
\begin{equation}  \label{diagram: ladder}
\begin{CD}
(E\Rcal_{m,\nontrivial}\mbox{}^{\diamond}\wedge S^{0})_{\hobased U(m)}
   @>>> (E\Rcal_{m}\mbox{}^{\diamond}\wedge S^{0})_{\hobased U(m)}\\
@VVV @VVV\\
(E\Rcal_{m,\nontrivial}\mbox{}^{\diamond}\wedge S^{0})_{U(m)}
   @>>> (E\Rcal_{m}\mbox{}^{\diamond}\wedge S^{0})_{U(m)}\\
@VVV @VVV\\
(E\Rcal_{m,\nontrivial}\mbox{}^{\diamond}\wedge S^{2m})_{U(m)}
   @>>> (E\Rcal_{m}\mbox{}^{\diamond}\wedge S^{2m})_{U(m)}
\end{CD}
\end{equation}
When $m$ is a power of $p$, say $m=p^{k}$, applying $\Sigma^{\infty}$
to the middle row of this ladder turns out to give the $(k+1)$-fold
suspension of the map $\Sigma^{-1}L(k)\rightarrow T(k)$ in~\eqref{eq:
  bu chain complexes}, so the following lemma is an essential
ingredient in Proposition~\ref{prop: null map}. 

\begin{lemma}     \label{lemma: inclusion null}
$(E\Rcal_{m,\nontrivial}\mbox{}^{\diamond}\wedge S^{0})_{U(m)}
   \rightarrow (E\Rcal_{m}\mbox{}^{\diamond}\wedge S^{0})_{U(m)}$
is nullhomotopic.
\end{lemma}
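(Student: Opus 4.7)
The plan is to exploit the fact that the collection $\Rcal_m$ contains the trivial subgroup $\{e\}$ as a $U(m)$-fixed minimum element, whereas $\Rcal_{m,\nontrivial}$ does not. This gives the poset nerve $|\Rcal_m|$ a $U(m)$-equivariant cone structure over $|\Rcal_{m,\nontrivial}|$ with cone point $\{e\}$, so $|\Rcal_m|$ is $U(m)$-equivariantly contractible and the inclusion $|\Rcal_{m,\nontrivial}| \hookrightarrow |\Rcal_m|$ is $U(m)$-equivariantly nullhomotopic.

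First I would observe that because the two cone points of $X^{\diamond}$ are always fixed by any $G$-action on $X$, unreduced suspension commutes with passage to strict orbits: $(X^{\diamond})_{U(m)} = (X_{U(m)})^{\diamond}$. The lemma therefore reduces to showing that the unreduced suspension of the orbit-space map
\[
f\colon (E\Rcal_{m,\nontrivial})_{U(m)} \longrightarrow (E\Rcal_m)_{U(m)}
\]
is nullhomotopic as a based map (with basepoint a chosen cone point).

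Next I would use the canonical $U(m)$-equivariant comparison map $\pi\colon E\Rcal_m \to |\Rcal_m|$ sending $(O,x)$ to the stabilizer of $x$, a nonequivariant homotopy equivalence. Since $|\Rcal_m|$ is $U(m)$-equivariantly contractible, the composite $E\Rcal_{m,\nontrivial} \hookrightarrow E\Rcal_m \xrightarrow{\pi} |\Rcal_m|$ is $U(m)$-equivariantly nullhomotopic. Taking $U(m)$-orbits and then $(-)^{\diamond}$ produces a nullhomotopy of the composite map into $(|\Rcal_m|_{U(m)})^{\diamond}$; note that $|\Rcal_m|_{U(m)}$ is the nerve of the quotient poset $\Rcal_m/U(m)$, which inherits a minimum and is therefore contractible, so its unreduced suspension is again contractible.

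The main obstacle is lifting this nullhomotopy along the intermediate map
\[
\pi^{\diamond}_{U(m)}\colon (E\Rcal_m^{\diamond})_{U(m)} \longrightarrow (|\Rcal_m|^{\diamond})_{U(m)}
\]
to a nullhomotopy of $f^{\diamond}$ itself. This is delicate because $\pi$ fails to be a $U(m)$-equivalence — the trivial subgroup appears as a free $U(m)$-orbit in $E\Rcal_m$ but as a fixed vertex in $|\Rcal_m|$ — so the induced map on strict orbit spaces is not a homotopy equivalence (for instance, when $m=1$, $(E\Rcal_1)_{U(1)} = BU(1)$ whereas $(|\Rcal_1|)_{U(1)}$ is a point). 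I anticipate handling this step by working with a $U(m)$-CW model of $E\Rcal_m$: namely, $E\Rcal_m$ is obtained from $E\Rcal_{m,\nontrivial}$ by attaching free $U(m)$-cells, so on strict orbits one adds cells whose contribution after the single unreduced suspension can be controlled explicitly, using the fact that the two fixed cone points of $E\Rcal_m^{\diamond}$ provide canonical endpoints for the homotopy. The hard part will be verifying that the nullhomotopy on the $|\Rcal|$-side is actually detected in the $E\Rcal$-orbit suspension, rather than requiring further stabilization.
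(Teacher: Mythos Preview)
Your diagnosis of the obstruction is correct, but the proposal does not close the gap you yourself identify. The equivariant contractibility of $|\Rcal_m|$ says nothing about strict $U(m)$-orbits of $E\Rcal_m$, precisely because $E\Rcal_m\to|\Rcal_m|$ is not a $U(m)$-equivalence; your $m=1$ example already shows $(E\Rcal_1)_{U(1)}\simeq BU(1)$ is not contractible, so some genuine input is needed to explain why one unreduced suspension suffices to kill the map. Your last paragraph offers only a sketch (``attach free cells, control them after suspension'') and explicitly labels the crucial verification ``the hard part.'' No mechanism is given for why the attached free cells become null after a single suspension, and in fact they need not individually: the target $(E\Rcal_m^{\diamond})_{U(m)}=B\Rcal_m^{\diamond}$ is not contractible. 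As written, the argument is incomplete.

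The paper supplies a different idea that does the job cleanly. Instead of passing to $|\Rcal_m|$, one smashes with the unitary representation sphere $S^{2m}=(\complexes^m)^+$ and uses the lower square of the ladder~\eqref{diagram: ladder}. Two facts from~\cite{A-L} finish the proof: first, $(E\Rcal_{m,\nontrivial}^{\diamond}\wedge S^{2m})_{U(m)}\simeq *$ (\cite{A-L}, Proposition~9.13), and second, the inclusion $S^0\hookrightarrow S^{2m}$ induces a $U(m)$-equivalence $E\Rcal_m^{\diamond}\wedge S^0\to E\Rcal_m^{\diamond}\wedge S^{2m}$ (\cite{A-L}, equation~(9.3)), hence an equivalence on strict orbits. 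Commutativity of the square then exhibits the map in question, composed with an equivalence, as factoring through a contractible space. The representation sphere is exactly the device that tames the free $U(m)$-cells you were worried about; nothing in your outline plays that role.
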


\begin{proof}
Consider the lower square of~\eqref{diagram: ladder}.
By \cite{A-L} Proposition~9.13, we know that 
$(E\Rcal_{m,\nontrivial}\mbox{}^{\diamond}\wedge S^{2m})_{U(m)}\simeq *$. 
On the other hand, equation (9.3) in \cite{A-L} says that
\[
(E\Rcal_{m}\mbox{}^{\diamond}\wedge S^{0}) \rightarrow
(E\Rcal_{m}\mbox{}^{\diamond}\wedge S^{2m})
\]
is a $U(m)$-equivalence, and so
\[
(E\Rcal_{m}\mbox{}^{\diamond}\wedge S^{0})_{U(m)} \rightarrow
(E\Rcal_{m}\mbox{}^{\diamond}\wedge S^{2m})_{U(m)}
\]
is an equivalence, which completes the proof. 
\end{proof}

The remaining ingredients for the proof of 
Proposition~\ref{prop: null map} come from the upper square
of~\eqref{diagram: ladder}. The first step is to show that the upper
square is a homotopy pushout square and that it has a contractible
space in the upper right corner. The second step
is to identify the suspension spectra of the remaining corners of the
square with the appropriate suspensions of $L(k)$, $T(k)$, and
$\Mbar(k)$.

\begin{lemma}     \label{lemma: R cofiber seq}
The top square of \eqref{diagram: ladder} is a homotopy 
pushout square with contractible upper right corner. 
\end{lemma}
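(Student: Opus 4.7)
The plan is to prove contractibility of the upper right corner and the homotopy pushout property in sequence, both by exploiting the fact that $\Rcal_m$ contains the trivial subgroup while $\Rcal_{m,\nontrivial}$ does not.

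For the upper right corner, note that $E\Rcal_m^\diamond \wedge S^0 = E\Rcal_m^\diamond$. Applying reduced $U(m)$-Borel construction to the cofiber sequence of based $U(m)$-spaces $(E\Rcal_m)_+ \to S^0 \to E\Rcal_m^\diamond$ yields
\[
\bigl(EU(m)\times_{U(m)} E\Rcal_m\bigr)_+ \longrightarrow BU(m)_+ \longrightarrow \bigl(E\Rcal_m^\diamond\bigr)_{\hobased U(m)},
\]
so contractibility of the third term follows from non-equivariant contractibility of $E\Rcal_m$. As noted immediately after Definition~\ref{defn: subgroup types}, the trivial subgroup is in $\Rcal_m$; hence the poset $\Rcal_m$ has a minimum element, so the nerve $|\Rcal_m|$ is contractible; and by \cite{A-D}~2.12, $E\Rcal_m \simeq |\Rcal_m|$ non-equivariantly, which is what we need.

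For the pushout property, recall that a commutative square of pointed spaces is a homotopy pushout iff the induced map on horizontal cofibers is a weak equivalence. Since reduced homotopy orbits and strict orbits both preserve cofiber sequences of based $U(m)$-spaces, it suffices to show that $Y_{\hobased U(m)} \to Y_{U(m)}$ is a weak equivalence, where $Y$ denotes the pointed $U(m)$-cofiber of $E\Rcal_{m,\nontrivial}^\diamond \to E\Rcal_m^\diamond$. This in turn reduces to verifying that $Y$ has free $U(m)$-action away from the basepoint. For every nontrivial subgroup $H \leq U(m)$, the trivial subgroup does not contain $H$, so the subposets $\{K \in \Rcal_m : K \supseteq H\}$ and $\{K \in \Rcal_{m,\nontrivial} : K \supseteq H\}$ coincide; using the identification of $H$-fixed points of a universal space with the nerve of such a subposet, the map $(E\Rcal_{m,\nontrivial})^H \to (E\Rcal_m)^H$ is a weak equivalence. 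Passing to unreduced suspensions (which commutes with fixed points when the basepoint is $U(m)$-fixed) and then to the pointed cofiber gives $Y^H \simeq *$ for all $H \neq \{e\}$, so $Y$ is free away from basepoint and $Y_{\hobased U(m)} \simeq Y_{U(m)}$ as required.

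The main obstacle is the fixed-point bookkeeping on $Y$: subgroups $H \in \Rcal_{m,\nontrivial}$, subgroups outside $\Rcal_m$ but nontrivial, and the trivial subgroup each contribute differently, and one must verify carefully that in every nontrivial case the relevant subposets coincide so that $Y^H \simeq *$. Once this is in hand, the identification of reduced with strict orbits on $G$-spaces that are free away from basepoint, and the implication that the square is a homotopy pushout, are formal consequences of standard properties of Borel constructions.
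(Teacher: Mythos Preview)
Your approach is essentially the same as the paper's: show that the cofiber of
$E\Rcal_{m,\nontrivial}^{\diamond}\to E\Rcal_m^{\diamond}$ has a based-free
$U(m)$-action, so that homotopy orbits and strict orbits of that cofiber
agree, and use contractibility of $E\Rcal_m$ for the upper right corner.
Two points deserve correction.

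First, the general principle you invoke---``a commutative square of pointed
spaces is a homotopy pushout iff the induced map on horizontal cofibers is a
weak equivalence''---is false without a connectivity hypothesis.  A map with
contractible homotopy cofiber is only a homology equivalence; to upgrade to a
weak equivalence you need the source and target to be simply connected (for
instance, map an acyclic space built from a nontrivial perfect group to a
point).  The paper handles this by observing that all four corners are
unreduced suspensions of connected spaces, hence simply connected; you should
add that observation.

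Second, your fixed-point computation is mis-justified.  The $H$-fixed set of
$E\Ccal$ is \emph{not} in general the nerve of $\{K\in\Ccal: K\supseteq H\}$;
for $H$ not in $\Ccal$ that poset can be nonempty while $(E\Ccal)^H$ is empty.
The correct (and simpler) argument is the defining property of universal
spaces: $(E\Ccal)^H\simeq *$ if $H\in\Ccal$ and $(E\Ccal)^H=\emptyset$
otherwise.  For any nontrivial $H$ one has $H\in\Rcal_m$ iff
$H\in\Rcal_{m,\nontrivial}$, so the inclusion is an equivalence (indeed an
isomorphism with the standard models) on $H$-fixed points, and the cofiber is
based-free.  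The paper phrases this more succinctly by noting that
$E\Rcal_{m,\nontrivial}\hookrightarrow E\Rcal_m$ is the inclusion of the
singular set.
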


\begin{proof}
  The proof is essentially the same as that of Proposition~9.11
  of~\cite{A-L}. The map 
  $E\Rcal_{m,\nontrivial}\rightarrow E{\Rcal_{m}}$ is the inclusion of
  the singular set, as is easily checked by looking at the
  isotropy groups of the chains that form the simplices of
  $E{\Rcal_{m}}$. The cofiber of this map has a free action of $U(m)$
  (in the based sense), and so horizontal cofibers of the first two
  rows of of \eqref{diagram: ladder} are equivalent.  Because all of
  the spaces involved are simply connected (they are suspensions of
  connected spaces), this is sufficient to guarantee that the top
  square of \eqref{diagram: ladder} is a homotopy pushout square.
  Lastly, because $\Rcal_{m}$ contains the trivial subgroup, it
  follows that $E\Rcal_{m}\simeq |\Rcal_{m}|\simeq *$, and so
  $(E\Rcal_{m}\mbox{}^{\diamond}\wedge S^{0})_{\hobased U(m)}$ is
  contractible.
\end{proof}

To begin the identification of the suspension spectra of the corners
of the upper square of~\eqref{diagram: ladder}, note that 
the middle row is just the inclusion map 
$B\Rcal_{m,\nontrivial}\mbox{}^{\diamond} \rightarrow
     B\Rcal_{m}\mbox{}^{\diamond}$.
We recall from
\cite{A-L} Proposition~9.8 that the inclusion of the collection of
complete subgroups into standard subgroups induces a map
$E\Lcal_{m}\rightarrow E\Rcal_{m,\nontrivial}$ that is
a $U(m)$-equivalence, and therefore is still a homotopy equivalence
once we take orbit spaces. This allows us to replace the middle
row of~\eqref{diagram: ladder} with the map 
$B\Lcal_{m}\mbox{}^{\diamond} \rightarrow B\Rcal_{m}^{\diamond}$
and the top left corner with 
$\left(E\Lcal_{m}^{\diamond}\right)_{\hobased U(m)}$. 
On the other hand, the $U(m)$-equivariant map 
$E\Lcal_{m}\rightarrow |\Lcal_{m}|$ is a homotopy equivalence 
(though not a $U(m)$-equivalence), and so induces an equivalence
on homotopy orbits. Thus Lemmas~\ref{lemma: inclusion null}
and~\ref{lemma: R cofiber seq} tell us that the upper square 
of~\eqref{diagram: ladder} is equivalent to a
homotopy pushout square
\begin{equation}  \label{diagram: spectrum pushout square}
\begin{CD}
|\Lcal_{m}|^{\diamond}_{\hobased U(m)} @>>> *\\
@VVV @VVV\\
\left(B\Lcal_{m}\mbox{}^{\diamond}\right)
     @>{*}>> \left(B\Rcal_{m}\mbox{}^{\diamond}\right).
\end{CD}
\end{equation}

To prove Propositions~\ref{prop: null map} and~\ref{prop: Mbar projective}, 
we need to relate \eqref{diagram: spectrum pushout square} to 
the successive quotients of the spectra $A_{m}$ and $\Sp^{m}(\Sphere)$
for $m=p^{k}$. The most immediate is the lower right corner. 
It follows from \cite{A-L}~Corollary~8.3 that 
\begin{align}\label{eq: T(k)}
\Sigma^\infty(B\Rcal_{p^{k}}\mbox{}^{\diamond})
     &\simeq A_{p^{k}}/A_{p^{k}-1}
\intertext{We want to relate the lower left corner
of \eqref{diagram: spectrum pushout square} to the 
quotients of symmetric powers of spheres, which can also be identified
in terms of classifying spaces of collections of subgroups, this time
of symmetric groups rather than unitary groups. 
Let $\Fcal_{m}$ be the collection of proper standard subgroups of
$\Sigma_{m}$, i.e., subgroups of $\Sigma_{m}$ that are conjugate to 
subgroups of the form $\Sigma_{m_{1}}\times\dots\times\Sigma_{m_{s}}$. 
Then 
}
 \label{eq: L(k)}
     \Sigma^{\infty}\left(B\Fcal_{p^{k}}\mbox{}^{\diamond}\right)
     &\simeq Sp^{p^{k}}(\Sphere)/Sp^{p^{k}-1}(\Sphere).
\end{align}
The following proposition relates this to the lower left corner
of \eqref{diagram: spectrum pushout square}
through the inclusion $\Sigma_{m}\rightarrow U(m)$ that permutes the
standard basis elements of~$\complexes^{m}$. 

\begin{proposition}  \label{prop: BL is BF}
The $\Sigma_{m}$-equivariant inclusion $\Fcal_{m}\rightarrow \Lcal_{m}$ 
induces a homotopy equivalence $B\Fcal_{m}\simeq B\Lcal_{m}$.
\end{proposition}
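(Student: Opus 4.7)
The plan is to identify both $B\Fcal_m$ and $B\Lcal_m$ with the nerve of a common poset $\Pi_m$, namely the poset of proper integer partitions of $m$ ordered by refinement, and to check that the inclusion $\Fcal_m \hookrightarrow \Lcal_m$ becomes the identity on $\Pi_m$ after taking orbits. The first observation is that the $\Sigma_m$-conjugacy classes of proper Young subgroups of $\Sigma_m$ are in bijection with $\Pi_m$ via $\Sigma_{m_1} \times \cdots \times \Sigma_{m_s} \leftrightarrow (m_1, \ldots, m_s)$, and this bijection carries subgroup-inclusion to refinement. The same holds on the $\Lcal_m$ side: conjugacy classes of complete subgroups of $U(m)$ under $U(m)$-conjugation are parameterized by their dimension profiles, again giving $\Pi_m$ with refinement. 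Moreover, the inclusion $\Fcal_m \hookrightarrow \Lcal_m$ sends $\Sigma_{m_1} \times \cdots \times \Sigma_{m_s}$ to $U(m_1) \times \cdots \times U(m_s)$, which represents the same element of $\Pi_m$.

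Next I would upgrade these bijections on conjugacy classes to homotopy equivalences $B\Fcal_m \simeq |\Pi_m|$ and $B\Lcal_m \simeq |\Pi_m|$. Since the nondegenerate simplices of $E\Fcal_m$ and $E\Lcal_m$ are classified by chains of subgroups, and the strict orbit space identifies simplices in the same conjugacy class, there are natural simplicial maps from these orbit spaces to $|\Pi_m|$ whose fibers should be contractible. For $B\Fcal_m$ this is a standard simplicial analysis of a finite-group action, using the non-equivariant equivalence $E\Fcal_m \simeq |\Fcal_m|$. For $B\Lcal_m$ there is an additional subtlety because the $U(m)$-orbit of a chain in $\Lcal_m$ is a positive-dimensional connected homogeneous space rather than a finite set; however, in the strict orbit space this homogeneous space is collapsed to a single simplex, so carrying out the identification simplex by simplex yields the desired equivalence.

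Once both identifications are in hand, the inclusion $\Fcal_m \hookrightarrow \Lcal_m$ realizes the identity on $|\Pi_m|$ under them, which gives the required homotopy equivalence. The hard part will be making the orbit-space identification for $B\Lcal_m$ fully rigorous, since the continuous $U(m)$-action is more delicate than the finite $\Sigma_m$-action on $|\Fcal_m|$. I would handle this either by choosing a cellular model of $|\Lcal_m|$ on which $U(m)$ acts cellularly with connected isotropy on each open cell (so that strict orbits collapse each cell to a single cell of $|\Pi_m|$), or equivalently by comparing strict orbits with homotopy orbits and invoking the connectedness of each $U(m)$-orbit to conclude that the two coincide up to homotopy in this situation.
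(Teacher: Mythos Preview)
There is a genuine gap. In this paper, $B\Ccal$ denotes the strict orbit space $E\Ccal/G$, where $E\Ccal$ is the nerve of the category of pairs $(O,x)$. As the paper's proof sketch indicates, this quotient is the nerve of the \emph{orbit category} of $G$ on the collection $\Ccal$: objects are transitive $G$-sets $G/H$ with $H\in\Ccal$, and morphisms are $G$-maps. This category has nontrivial automorphism groups, namely the Weyl groups $W_G(H)=N_G(H)/H$. In particular, for the partition $(1^m)$ the automorphism group is $W_{\Sigma_m}(\{e\})=\Sigma_m$ on the $\Fcal_m$ side and $W_{U(m)}(U(1)^m)=\Sigma_m$ on the $\Lcal_m$ side. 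So the ``fiber'' of your map $B\Fcal_m\to|\Pi_m|$ over the vertex $(1^m)$ is $B\Sigma_m$, not a point; your claim that the fibers are contractible is false.

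What you are actually computing is $|\Fcal_m|/\Sigma_m$ and $|\Lcal_m|/U(m)$, and those do agree with $|\Pi_m|$. But the paper explicitly warns that the comparison $E\Ccal\to|\Ccal|$ is only a non-equivariant homotopy equivalence, so one cannot pass from it to an equivalence of strict orbit spaces. The paper's proof instead compares the two orbit categories directly: it observes that the inclusion $\Sigma_\lambda\mapsto U_\lambda$ sets up a bijection on isomorphism classes of objects, and---crucially---that the Weyl groups (and more generally all hom-sets) match, since standard subgroups of $\Sigma_m$ are complete and complete subgroups of $\Sigma_m$ and $U(m)$ have isomorphic Weyl groups. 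This gives an equivalence of orbit categories, hence of their nerves, without ever passing through $|\Pi_m|$. Your plan, as written, loses exactly the Weyl-group information that makes the comparison work.
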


To prove the proposition, one shows that the orbit categories whose 
nerves give $B\Fcal_{m}$ and $B\Lcal_{m}$ have isomorphic subcategories
containing at least one object from each isomorphism class. The essential
point is that standard subgroups of $\Sigma_{m}$ are complete, and
complete subgroups of $U(m)$ and of $\Sigma_{m}$ have isomorphic Weyl groups.

\begin{proof}[Proof of Proposition~\ref{prop: null map}]

We have a commuting diagram
\[
  \xymatrix{
\Sigma^{\infty}\left(B\Fcal_{p^{k}}\mbox{}^{\diamond}\right)
     \ar[r]^{\simeq} 
     \ar[dd]^{\simeq}  
&  \Sigma^{\infty}\left(B\Lcal_{p^{k}}\mbox{}^{\diamond}\right)
     \ar[r]^{\simeq} 
&  \Sigma^{\infty}\left(B\Rcal_{p^{k},\nontrivial}\mbox{}^{\diamond}\right)
     \ar[d]^{*}  
\\
& &
\Sigma^{\infty}\left(B\Rcal_{p^{k}}\mbox{}^{\diamond}\right)
     \ar[d]^{\simeq}  
\\
\Sp^{p^{k}}(\Sphere) /\Sp^{p^{k}-1}(\Sphere) 
      \ar[rr]
&
&
A_{p^{k}}/A_{p^{k}-1}      
}
\]
that results from combining equations \eqref{eq: T(k)}
and \eqref{eq: L(k)} with the results of Proposition~\ref{prop: BL is BF},
Lemma~\ref{lemma: inclusion null}, and the naturality of the
identification of filtration quotients in terms of classifying spaces
of collections of subgroups.  The $(k+1)$-fold desuspension of
$\Sp^{p^{k}}(\Sphere)/\Sp^{p^{k}-1}(\Sphere) \rightarrow
A_{p^{k}}/A_{p^{k}-1}$ is
$\Sigma^{-1}L(k)\rightarrow T(k)$, so
this finishes the proof of the proposition.
\end{proof}

\begin{proof}[Proof of Proposition~\ref{prop: Mbar projective}]
From \eqref{diagram: spectrum pushout square}, \eqref{eq: T(k)},
and \eqref{eq: L(k)}, we know that there is a homotopy cofiber
sequence
\[
\left(|\Lcal_{m}|^{\diamond}\right)_{\hobased U(m)} 
                \rightarrow \Sp^{m}(\Sphere)/\Sp^{m-1}(\Sphere)
                \rightarrow A_{m}/A_{m-1}.
\]
Since $\Mbar(k)$ is defined in Section~\ref{sec: construction of Mbar}
by
\[
\Mbar(k)\simeq \Sigma^{-(k+1)}
     \cof\left(A_{p^{k-1}}/\Sp^{p^{k-1}}(\Sphere)
            \rightarrow A_{p^{k}}/\Sp^{p^{k}}(\Sphere)\right),
\]
this establishes that
\begin{equation}
\label{eq: MBar is ho(Partition)}
\Mbar(k)\simeq
S^{-k} \wedge \left(|\Lcal_{p^{k}}|^{\diamond} \right)_{\hobased U(p^{k})}.
\end{equation}
On the other hand, by \cite{A-L} Proposition~10.3 and its proof, we know that 
the spectrum
$S^{-k}\wedge \left(|\Lcal_{p^{k}}|^{\diamond} \right)_{\hobased U(p^{k})}$
is a wedge summand of 
$\Sigma^{\infty}\left(B\Gamma_{k}\mbox{}^{\diamond}\right)_{+}$
by the symplectic Steinberg idempotent, which finishes the proof. 
\end{proof}

\subsection{Calculus evidence for variants of the Whitehead conjecture}
\label{section: calculus evidence}

In this subsection, we present evidence from the calculus of functors
for Conjecture~\ref{conjecture: bu Whitehead reduced}, the ``reduced
$bu$-Whitehead conjecture,'' as well as for the classical ``mod $p$ Whitehead
Conjecture'' when $p=2$. This follows on from the discussion
in~\cite{A-L} Sections 1 and~11, where we discussed a tantalizing link
between, on the one hand, the chain complexes of spectra
\begin{align}
\label{eq: old}
\dots \longrightarrow L(1)\longrightarrow L(0)\longrightarrow \HZ \\
\label{eq: new}
\dots \longrightarrow T(1)\longrightarrow T(0)
               \longrightarrow bu\longrightarrow \HZ 
\end{align}
and, on the other hand, certain ``Taylor towers'' arising from the
calculus of functors of Goodwillie and Weiss. For~\eqref{eq: old}, the
link is that $\Omega^{k-1}\Omega^\infty L(k)$ is equivalent to the
$k$-th nontrivial layer of the Goodwillie tower at the prime $p$ of
the identity functor evaluated at $S^1$. Similarly for~\eqref{eq:
  new}, $\Omega^{k-1}\Omega^\infty T(k)$ is equivalent to the $k$-th
nontrivial layer of the Weiss tower of the functor $V\mapsto BU(V)$ at
the prime $p$, evaluated at $\complexes$. This suggests that there may
be a deeper connection between the complexes~\eqref{eq: old}
and~\eqref{eq: new} and the Taylor towers.  In particular, the number
of loops involved to relate the infinite loop spaces of the spectra
in~\eqref{eq: old} and~\eqref{eq: new} to the layers of the Taylor
towers is just right; it allows the possibility that deloopings of the
structure maps in the Taylor towers may serve as a contracting
homotopy for the complexes, which would provide conceptual proofs of
exactness.  (See~\cite{ADL} for more on such deloopings.) We have not
been able to prove these speculations.

There exists a similar link to a Taylor tower for the reduced
$bu$-Whitehead Conjecture. 
In order to obtain a Taylor tower that potentially provides a
contracting homotopy for the complex
\[
\dots\longrightarrow \Mbar(2)\longrightarrow \Mbar(1)
                             \longrightarrow \Mbar(0) 
                             \longrightarrow bu
\]
of Conjecture~\ref{conjecture: bu Whitehead reduced}, 
we evaluate the Weiss tower for the functor $V\mapsto BU(V)$ 
at the vector space $\complexes^{0}$. We find (by
\cite{Arone-Topology}) that the fibers in the tower have the form
\[\Omega^{\infty}\map\left(|\Lcal_{p^k}|^{\diamond},
      \Sigma^{\infty}S^{\ad_{p^k}}\right)_{\hobased U(p^k)}.
\]
However, by Theorem~10.1 of \cite{A-L}, there is a mod $p$ equivalence
of spectra
\[
\map\left(|\Lcal_{p^k}|^\diamond, \Sigma^{\infty}S^{\ad_{p^k}}\right)
   \simeq_{p}
S^{-2k+1}\wedge|\Lcal_{p^k}|^\diamond.
\]
By \eqref{eq: MBar is ho(Partition)},
we know that $\Mbar(k)\simeq
S^{-k}\wedge\left(|\Lcal_{p^{k}}|^{\diamond} \right)_{\hobased U(p^{k})}$.
Thus we conclude that when the Weiss tower for 
$V\mapsto BU(V)$ is evaluated at $\complexes^{0}$,
there is a mod $p$ equivalence between the $k$-th layer and
the appropriate loop space of $\Mbar(k)$: 
\[
\Omega^{\infty}\map\left(|\Lcal_{p^k}|^{\diamond},
      \Sigma^{\infty}S^{\ad_{p^k}}\right)_{\hobased U(p^k)}
\simeq 
\Omega^{k-1}\Omega^{\infty}\Mbar(k).
\]
The connecting maps in the Taylor tower give us maps 
\begin{equation}   \label{eq: connecting map}
\Omega^{k-1}\Omega^{\infty}\Mbar(k)
\longrightarrow 
B\Omega^{k}\Omega^{\infty}\Mbar(k+1)
\simeq
\Omega^{k-1}\Omega^{\infty}\Mbar(k+1).
\end{equation}
Application of \cite{ADL} Corollary~7.2 shows that the connecting map
\eqref{eq: connecting map} deloops $k-1$ times, and so a delooping could
provide a contracting homotopy, as suggested in the following conjecture. 

\begin{conjecture}
(k-1)-fold deloopings of \eqref{eq: connecting map}
give a contracting homotopy for the complex of
Conjecture~\ref{conjecture: bu Whitehead reduced}.
\end{conjecture}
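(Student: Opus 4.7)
The plan is to verify the contracting-homotopy criterion of Lemma~\ref{lemma: contracting homotopy}. For each $k \geq 0$, I would set
\[
\epsilon_k \colon \Omega^\infty \Mbar(k) \longrightarrow \Omega^\infty \Mbar(k+1)
\]
equal to the $(k-1)$-fold delooping of the Weiss-tower connecting map~\eqref{eq: connecting map}, whose existence as a map of spaces is granted by \cite{ADL} Corollary~7.2. The conjecture then reduces to showing that
\[
\epsilon_{k-1} \circ (\Omega^\infty \partial_k) \;+\; (\Omega^\infty \partial_{k+1}) \circ \epsilon_k
\]
is a weak equivalence for every $k \geq -1$, where $\partial_k$ are the boundary maps in the complex $\{\Mbar(k)\}$ of Conjecture~\ref{conjecture: bu Whitehead reduced}.

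The first step is to set up a precise dictionary between the complex $\{\Mbar(k)\}$ and the Weiss tower of $V \mapsto BU(V)$ evaluated at $\complexes^0$. Using the identification $\Mbar(k) \simeq S^{-k} \wedge (|\Lcal_{p^k}|^\diamond)_{\hobased U(p^k)}$ together with Theorem~10.1 of \cite{A-L}, the $k$-th layer of the Weiss tower is mod~$p$ equivalent to $\Omega^{k-1}\Omega^\infty \Mbar(k)$. I would verify that under this identification the boundary maps $\partial_k$ correspond, up to $p$-local units, to the connecting maps of successive Taylor layers; this should follow from the naturality of the partition-complex identification and the fact that both sides arise from the stable rank filtration of $bu$ via Corollary~\ref{cor: A-L vis Rognes, revisited} and Theorem~\ref{thm: pushout theorem}. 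Once the dictionary is in place, the required weak equivalence becomes a statement about the convergence of the Weiss tower for $V \mapsto BU(V)$ at $V = \complexes^0$: since $BU(0) \simeq *$, the alternating composites of delooped connecting maps with the $\partial_k$'s should telescope to the identity in the homotopy category.

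The hard part is the rigorous verification that the delooped connecting maps assemble into a \emph{coherent} contracting homotopy, rather than merely into a stagewise splitting of each cofiber sequence. The delooping produced by \cite{ADL} is obtained one stage at a time, so the global compatibility required by Lemma~\ref{lemma: contracting homotopy} is not automatic: one must show that $\epsilon_k$ and $\epsilon_{k-1}$ fit together so that their combination with the $\partial$'s is an equivalence, and not just that each cofiber sequence of $\Omega^\infty$'s splits individually. In effect, one needs a coherent delooping of the entire Weiss tower at $\complexes^0$, not merely of its individual structure maps. Constructing this coherent delooping — probably by strengthening \cite{ADL} Corollary~7.2 to a statement about coherent deloopings of the full tower, or by importing operadic/$\infty$-categorical delooping machinery for the polynomial approximation functors — is where I expect the bulk of the difficulty to lie.
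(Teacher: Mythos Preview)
The statement you are attempting to prove is labelled a \emph{Conjecture} in the paper, and the paper offers no proof of it. Section~\ref{section: calculus evidence} presents the identification of the layers of the Weiss tower at $\complexes^0$ with $\Omega^{k-1}\Omega^\infty\Mbar(k)$, records that \cite{ADL} Corollary~7.2 provides the required $(k-1)$-fold delooping of the individual connecting maps, and then simply states the conjecture that these deloopings assemble into a contracting homotopy. There is nothing further to compare your proposal against.

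Your outline is a faithful reading of what the paper itself suggests, and you have correctly located the essential gap: \cite{ADL} produces deloopings one stage at a time, whereas Lemma~\ref{lemma: contracting homotopy} requires the maps $\epsilon_k$ to interact correctly with both $\partial_k$ and $\partial_{k+1}$. Your sketch also contains a second, earlier gap that you pass over too quickly: the claim that ``the boundary maps $\partial_k$ correspond, up to $p$-local units, to the connecting maps of successive Taylor layers'' is not established anywhere in the paper. The paper observes that the \emph{layers} match, but it does not identify the $\partial_k$ of the $\Mbar$-complex with the structure maps of the Weiss tower; indeed, making this identification precise is part of what the conjecture is about. Likewise, the sentence ``since $BU(0)\simeq *$, the alternating composites \ldots\ should telescope to the identity'' conflates convergence of the tower with the existence of a contracting homotopy: convergence to a point gives collapse of a spectral sequence, not directly the splitting required by Lemma~\ref{lemma: contracting homotopy}. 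In short, your plan is the natural one the paper invites, but it remains a plan for attacking an open problem, not a proof.
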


Finally, we record as a curiosity Bill Dwyer's observation that the
``mod $2$ Whitehead conjecture,'' which asserts the exactness of
the complex~\eqref{eq: mod p Whitehead} for the prime $2$, also
possesses a Taylor tower that might give a contracting homotopy.  
(A contracting homotopy was constructed in~\cite{Kuhn-Priddy} by other
methods.) However, we do not have a calculus analogue for the odd
primary analogue of this statement.  The point for $p=2$ is that $S^0$
can be identified with $\integers/2\simeq \Omega^\infty \HZ/2$. Thus,
the Taylor tower of the identity evaluated at $S^0$ gives rise to a
tower of fibrations converging to $ \Omega^\infty \HZ/2$, or at least
trying to converge.\footnote{It is not known if the Taylor tower for 
the identity converges at $S^{0}$. We speculate that it does. }
Proposition~\ref{prop: mod 2 layers} below, together with Theorem~1.17
and Corollary~9.6 of \cite{A-D}, tells us that the layers in this
Taylor tower coincide, up to the right number of deloopings, with the
terms $M(k)$ in~\eqref{eq: mod p Whitehead}. That is, 
Proposition~\ref{prop: mod 2 layers} tells us that
$\Omega^{k}\Omega^{\infty}M(k)$ is the $k$-th layer in the
$2$-localized Taylor tower for the identity evaluated at $S^{0}$.
Hence the connecting maps for this Taylor tower give maps
\[
\Omega^{k}\Omega^{\infty}M(k)\rightarrow \Omega^{k}\Omega^{\infty}M(k+1),
\]
and deloopings of these would provide potential contracting 
maps for \eqref{eq: mod p Whitehead}, as in the other situations we have
examined. 

\begin{proposition}    \label{prop: mod 2 layers}
At the prime $2$,
\[
M(k)\simeq_{2} S^{-(k-1)}\wedge \left(
                       |\Pcal_{2^{k}}|^{\diamond} 
                  \right)_{\hobased \Sigma_{2^{k}}}
\]
\end{proposition}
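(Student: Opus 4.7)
The approach is to mimic the derivation of equation~\eqref{eq: MBar is ho(Partition)}, which expressed $\Mbar(k)$ as a desuspended homotopy orbit of the unreduced suspension of the complete-subgroup complex of $U(p^k)$. Here, for $M(k)$ at $p=2$, the parallel identification uses the partition complex $|\Pcal_{2^k}|^{\diamond}$ as the corresponding $\Sigma_{2^k}$-equivariant object.

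First, I would use standard manipulation of the columns of diagram~\eqref{eq: symm power chain complex} to identify $M(k)$, at any prime $p$, as the cofiber of $\Sigma^{-1}L(k-1)\to L(k)$, where the map is induced by the $p$-fold diagonal $\Sp^{p^{k-1}}(\Sphere)\to \Sp^{p^k}(\Sphere)$. Next, I would use~\eqref{eq: L(k)} together with a symmetric-group analog of Proposition~\ref{prop: BL is BF} relating the classifying space of standard subgroups of $\Sigma_m$ to the nerve of the partition poset, in order to write each $L(k)$ as a desuspended homotopy orbit of $|\Pcal_{p^k}|^{\diamond}$ under $\Sigma_{p^k}$.

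The heart of the argument is the $\Sigma_{2^k}$-equivariant identification of the diagonal-induced map. The $2$-fold diagonal corresponds to the wreath-product embedding $\Sigma_{2}\wr \Sigma_{2^{k-1}}\hookrightarrow \Sigma_{2^k}$, and on the partition complex this picks out the subcomplex of partitions refining a chosen coarsest two-block partition. At $p=2$ this subcomplex admits a clean equivariant description in terms of a representation sphere, and the resulting cofiber computation, organized via a pushout diagram parallel to~\eqref{diagram: spectrum pushout square}, produces the claimed formula
\[
M(k)\simeq_{2} S^{-(k-1)}\wedge \left(|\Pcal_{2^{k}}|^{\diamond}\right)_{\hobased \Sigma_{2^{k}}}.
\]
This equivariant input is essentially the content of Theorem~1.17 and Corollary~9.6 of~\cite{A-D}, so a streamlined proof proceeds by invoking these results directly to avoid redoing the partition-complex analysis.

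The main obstacle is precisely this $p=2$ input: at odd primes the analogous map of partition complexes carries a nontrivial twist by the sign representation of $\Sigma_p$, so the identification of the subcomplex as a representation sphere breaks down, and one instead sees a twisted spectrum. This is the structural reason the statement is restricted to $p=2$, and it matches the restriction of Dwyer's observation about the Taylor tower of the identity at $S^0$ mentioned in the discussion just preceding the proposition.
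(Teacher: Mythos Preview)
Your approach diverges substantially from the paper's and contains a genuine gap. The paper's argument is a two-line identification: both sides of the claimed equivalence are shown to be the Steinberg summand of $\Sigma^{\infty}(B\Delta_{k})_{+}$. The right-hand side is this summand by Theorem~1.17 and Corollary~9.6 of~\cite{A-D} applied with $p=2$ and $X=S^{0}$, while $M(k)$ is this same summand by Mitchell--Priddy (Proposition~\ref{prop: M projective} above). That is the entire proof; no cofiber-sequence or partition-complex manipulation is carried out. You do cite~\cite{A-D}, but only as ``equivariant input'' feeding a longer computation, and you never invoke the Mitchell--Priddy identification, which is the other half of the paper's argument and the step that pins down $M(k)$.

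Your second step is also incorrect as written. You propose to write each $L(k)$ as a desuspended \emph{homotopy} orbit of $|\Pcal_{p^{k}}|^{\diamond}$, but~\eqref{eq: L(k)} identifies $L(k)$ with a desuspension of the \emph{strict} orbit space $B\Fcal_{p^{k}}^{\diamond}$, not a homotopy orbit. Indeed, the content of Proposition~\ref{prop: mod 2 layers} is precisely that the homotopy orbit of the partition complex recovers $M(k)$ rather than $L(k)$; if your step~2 held, the proposition would contradict $M(k)\not\simeq L(k)$. Relatedly, the parallel you draw to diagram~\eqref{diagram: spectrum pushout square} does not transfer cleanly: that square arose from an inclusion of collections of subgroups (removing the trivial group from $\Rcal_{m}$), whereas the $p$-fold diagonal you must analyze comes from the wreath-product homomorphism $\Sigma_{2}\wr\Sigma_{2^{k-1}}\hookrightarrow\Sigma_{2^{k}}$, a structurally different map that does not fit the same template.
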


\begin{proof}
We know from Theorem~1.17 and Corollary~9.6 of \cite{A-D} 
(applied with $p=2$ and $X=S^{0}$) that 
$S^{-(k-1)}\wedge \left(
                       |\Pcal_{2^{k}}|^{\diamond} 
                  \right)_{\hobased \Sigma_{2^{k}}}$
is the Steinberg summand of $\Sigma^{\infty}\left(B\Delta_{2^{k}}\right)_{+}$.
On the other hand, Mitchell and Priddy \cite{Mitchell-Priddy}
proved that (at all primes) $M(k)$ is the Steinberg wedge summand of
$\Sigma^{\infty}\left(B\Delta_{p^{k}}\right)_{+}$, and this
concludes the proof. 
%
\end{proof}

\section{\protect General constructions}
\label{section: general construction}
In this section, we make technical preparations for 
Section~\ref{section: comparisons}, where our goal is to compare the filtration
constructed in~\cite{A-L} to Rognes's stable rank filtration of algebraic
$K$-theory~\cite{Rognes}. 
Rognes defined his stable rank filtration only for the case of the
algebraic $K$-theory of a discrete ring, but his construction can be
applied to topological $K$-theory as well, and the topological case is
covered by our discussion. The comparison in Section~\ref{section:
  comparisons} goes through an intermediary, the ``modified stable
rank filtration'' of a $K$-theory spectrum, which we introduce and
study in this section. It is based on Segal's $\Gamma$-space and
$\Gamma$-category constructions, as opposed to Waldhausen's $S_\bullet$
construction, which formed the setting of Rognes's original work. 
In other respects, however, it follows the spirit of~\cite{Rognes}.

In Section~\ref{section: filtered Gamma spaces}, we define the 
notion of a filtered $\Gamma$-space, we show how it arises from
an augmented permutative category, and we define the modified stable
rank filtration of a $K$-theory spectrum. In
Section~\ref{sec: sums and products}, we compare bar constructions
and homotopy pushouts of monoidal augmented $\Gamma$-spaces. 
Finally, in Section~\ref{section: very special Gamma spaces}
we study the particular situation of ``very special'' $\Gamma$-spaces, 
and we give a homotopy pushout diagram that allows an inductive 
understanding of the modified stable rank filtration for a 
permutative category.

\subsection{\protect Filtered and augmented $\protect\Gamma$-spaces}
\label{section: filtered Gamma spaces}

We begin by reviewing the relationship between $\Gamma$-spaces and
spectra and discussing how filtered $\Gamma$-spaces can serve as
convenient models for filtered spectra. We then consider how an
augmented permutative category gives rise to a filtered
$\Gamma$-space. We end this subsection with the definition of the
modified stable rank filtration of the spectrum associated to an
augmented permutative category, such as the algebraic $K$-theory
spectrum of a ring, and we give examples.

Let $\Gamma^{op}$ be the skeletal
category of finite pointed sets, having one object for each
cardinality. We will denote a generic object of $\Gamma^{op}$ by
$\n=\{0, 1, \ldots, n\}$, with zero acting as the basepoint. As
usual, a $\Gamma$-space $F$ is a pointed functor from $\Gamma^{op}$ to
the category of pointed simplicial sets.  A $\Gamma$-space may be
prolonged to a functor from the category of pointed simplicial sets to
itself via the diagonal of the levelwise evaluation of the functor on 
a simplicial set, once we replace it with its left Kan extension 
from the category of finite pointed sets to the category of all pointed sets.

A $\Gamma$-space comes equipped with a natural ``assembly map''
\[
X\wedge F(Y)\longrightarrow F(X\wedge Y),
\]
where $X$ and $Y$ are pointed simplicial sets, and so there
are suspension maps 
$S^{1}\wedge F(X)\longrightarrow F(S\sp{1}\wedge X)$. 
It follows that the sequence
\[
F(S^{0}), F(S^{1}), \ldots, F(S^{j}), \ldots 
\] 
forms a prespectrum, which we call the ``stabilization of $F$.''
A map $\alpha: F\rightarrow G$ between $\Gamma$-spaces is, by 
definition, a natural transformation between the underlying functors. 
The map $\alpha$ is called an ``equivalence'' if $\alpha(X)$ is a weak
equivalence for all $X$, and it is called a 
``stable equivalence'' if the associated map of stabilizations
\[
\{\alpha(S^{j})\}\colon \{F(S^{j})\} \longrightarrow \{G(S^{j})\}
\]
is a weak homotopy equivalence of spectra. The category of
$\Gamma$-spaces provides a good model for the category of
$(-1)$-connected spectra, via the functor $F\mapsto\{F(S^{j})\}$
\cite{B-F}, \cite{Lydakis-Gamma}.  

We are interested in filtrations of certain spectra, so we also 
set up a notion of ``filtered $\Gamma$-space'' as a model for
a filtered spectrum.

\begin{definition}
A ``filtered $\Gamma$-space'' is a sequence of $\Gamma$-spaces of the form
\[
F_{0}\longrightarrow F_{1}\longrightarrow\cdots\longrightarrow 
    F_{m}\longrightarrow \cdots\longrightarrow F=\underset{m}{\colim} F_{m}
\]
such that each of the maps $F_{m-1}\rightarrow F_{m}$ is an 
(objectwise) injection. 
\end{definition}

There is a self-evident notion of a map between filtered
$\Gamma$-spaces. We will denote a generic filtered $\Gamma$-space
sometimes by $F$, and sometimes by $\{F_{m}\}$.

\begin{definition}
  Let $F=\{F_{m}\}$ and $G=\{G_{m}\}$ be filtered $\Gamma$-spaces. Let
  $\alpha=\{\alpha_{m}\}$ be a filtered morphism from $F$ to $G$. We say
  that $\alpha$ is a ``filtered stable equivalence'' if $\alpha_{m}$
  is a stable equivalence for all $m$.
\end{definition}

The prototypical example of a filtered $\Gamma$-space is the infinite
symmetric product functor $\Sp^\infty$, filtered by the functors
$\Sp^{m}$ defined by $\Sp^{m}(X)=X^{m}/\Sigma_{m}$.  In this paper, we
are mostly concerned with filtrations that are pulled back from this
filtration of $\Sp^\infty$.

\begin{definition}   \label{defn: augmentation}
  An ``augmentation'' of a $\Gamma$-space $F$ is a map of
  $\Gamma$-spaces $\epsilon: F\rightarrow \Sp^{\infty}$ such
  that for all $X$, the inverse image of the basepoint of
  $\Sp^{\infty}(X)$ under the augmentation $\epsilon(X)$ consists of just
  the basepoint of $F(X)$. In this case $F$ will be called an ``augmented
  $\Gamma$-space.''
\end{definition}

We can use an augmentation to endow a $\Gamma$-space
$F$ with a filtration by $\Gamma$-spaces as follows.

\begin{definition}
Let $F$ be an augmented $\Gamma$-space. For $m=0, 1, 2,\ldots$, define
 $\Rcal_{m}F$ to be the strict limit (not the homotopy limit) 
of the following diagram:
\[
\begin{CD}
@. F\\
@. @VVV\\
\Sp^m @>>> \Sp^\infty .
\end{CD}
\]
\end{definition}

The $\Gamma$-spaces $\Rcal_{m}F$ endow $F$ with a filtration
\begin{equation}   \label{eq: filtration}
*=\Rcal_{0}F \longrightarrow \Rcal_{1}F
             \longrightarrow \cdots
             \longrightarrow F=\underset{m}{\colim}\Rcal_{m} F
\end{equation}
and we call it ``the filtration associated with the augmentation of
$F$.'' Because the composites 
$\Rcal_{m}F\rightarrow F\rightarrow\Sp^{\infty}$ provide
compatible augmentations for the $\Gamma$-spaces $\Rcal_{m}F$, 
the filtration $\{\Rcal_mF\}$ is actually a filtration of $F$ in the
category of augmented $\Gamma$-spaces. 

From here on we will work mostly in the category of augmented
$\Gamma$-spaces. 
\begin{definition}    \label{defn: augmented equivalence}
\hfill
\begin{enumerate}
\item 
Morphisms between augmented $\Gamma$-spaces that
respect the augmentations are called ``augmented morphisms.''
\item 
An augmented morphism $\alpha: F\rightarrow G$ between augmented
  $\Gamma$-spaces is called a ``strong augmented equivalence''
 (resp., ``strong augmented stable equivalence'') 
  if the induced map 
  $\Rcal_{m}\alpha:\Rcal_{m}F\longrightarrow \Rcal_{m}G$ is an equivalence 
  (resp. stable equivalence) for each $m$. 
\item 
  Two augmented $\Gamma$-spaces are said to be ``strongly (stably) equivalent''
  if they are related by a possibly zigzagging chain of strong augmented
  (stable) equivalences.
\end{enumerate}
\end{definition}


In order to define the modified stable rank filtration, we actually
want to filter augmented $\Gamma$-spaces that come from Segal's
construction of the $\Gamma$-space associated to a symmetric monoidal
category.  For technical convenience, we use a permutative category,
that is, a symmetric monoidal category in which the unit and
associativity isomorphisms are actually identity morphisms.  A basic
example is the ``category'' of nonnegative integers $\naturals$, where
the only morphisms are the identity morphisms, and the permutative
structure is given by addition.

Let $\Ccal$ be a category with sums such that the associated symmetric
monoidal category is in fact permutative.  (For example, take $\Ccal$
to have just one object in each isomorphism class.) Segal gave a
construction that associates to $\Ccal$ a $\Gamma$-category, i.e., a
functor $S\mapsto\Ccal(S)$ from $\Gamma^{op}$ to categories,
satisfying certain conditions.  The category $\Ccal(S)$ has objects
$(f,\alpha_{*})$ consisting of {\emph{(i)}} a function $f$ taking
  pointed subsets of $S$ to objects of $\Ccal$, and {\emph{(ii)}}
a collection of
  compatible isomorphisms $\alpha_{S_{1},S_{2}}: f(S_{1})\oplus
  f(S_{2})\xrightarrow{\cong} f(S_{1}\vee S_{2})$, one isomorphism for
  each pair of subsets $S_{1}$ and $S_{2}$ of $S$ whose intersection
  is the singleton set containing the basepoint.  Morphisms of such
  objects are required to be isomorphisms on the objects of $\Ccal$
  (or weak equivalences, in the context in which this is meaningful).
  There is a $\Gamma$-space associated to a $\Gamma$-category by
  taking nerves, and for simplicity, we do not distinguish between the
  two. That is, given a set $S$, we will write $\Ccal(S)$ for either
  the category $\Ccal(S)$ or its nerve, trusting to context to clarify
  which is meant. For example, the $\Gamma$-space associated with
  permutative category $\naturals$ is $\naturals(X)=\Sp^\infty(X)$.

\begin{definition}   \label{defn: augmented category}
  An ``augmentation'' of a permutative category $\Ccal$ is a 
  functor $\epsilon: \Ccal \rightarrow \naturals$ that respects the
  monoidal structure and has the property that that
  $\epsilon^{-1}(0)$ consists of just the zero object and its identity
  morphism. We write $\Ccal_{m}$ for the full subcategory of $\Ccal$
  given by $\epsilon^{-1}(m)$. 
\end{definition} 

If $\Ccal$ is an augmented permutative category, then the associated
$\Gamma$-space $\Ccal(X)$ is augmented (Definition~\ref{defn: augmentation}) 
because the augmentation $\epsilon: \Ccal\rightarrow\naturals$ induces
a natural transformation $\Ccal(X)\rightarrow\naturals(X)=\Sp^{\infty}(X)$.
Therefore, $\Ccal(X)$ is equipped with a natural filtration
as in~\eqref{eq: filtration}, pulled back from the filtration of
$\Sp^{\infty}(X)$.

\begin{definition}    \label{defn: filter C(X)}
For an augmented permutative category $\Ccal$, we denote the filtration
of $\Ccal(X)$ associated to the augmentation in the following way:
\[
\Rcal_0\left[\Ccal(X)\right] 
     \longrightarrow \Rcal_1\left[\Ccal(X)\right]
     \longrightarrow\cdots
     \longrightarrow \Rcal_m\left[\Ccal(X)\right] 
     \longrightarrow\cdots
     \longrightarrow \Rcal_\infty\left[\Ccal(X)\right].
\]
\end{definition}

Here, by definition we have $\Rcal_0\left[\Ccal(X)\right]=*$ and 
$\Rcal_\infty\left[\Ccal(X)\right]=\Ccal(X)$. Observe that 
$\Rcal_{m}/\Rcal_{m-1}\left[\Ccal(\one)\right] 
    \cong \left(B\Ccal_{m}\right)_{+}$.

Recall that given a permutative category $\Ccal$, the spectrum
\[
\Ccal\left(S^{0}\right), \Ccal\left(S^{1}\right), 
    \dots, \Ccal\left(S^{j}\right), \dots
\]
is called the (Segal) $K$-theory spectrum of the category $\Ccal$ and
is denoted $\kay\Ccal$.  
In fact, $\Ccal(-)$ is ``very special $\Gamma$-space,'' i.e., the
$n$ collapse maps $\n\rightarrow \one$ induce a homotopy
equivalence
\[
\Ccal(\n)\longrightarrow \Ccal(\one)^{n}. 
\]
As a consequence the spectrum $\{\Ccal(S^{j})\}$ is actually an
$\Omega$-spectrum for $j\geq 1$ \cite{Segal}.  


As soon as we filter $\Ccal(X)$ as in Definition~\ref{defn: filter C(X)}, 
however, the $\Gamma$-spaces $\Rcal_{m}[\Ccal(-)]$
are no longer ``very special.'' Nonetheless, the
mere fact that $\Rcal_{m}\left[\Ccal(-)\right]$ is a $\Gamma$-space 
gives us a suspension map 
\[
S^{1}\wedge \Rcal_{m}\left[\Ccal(S^{j}) \right]
      \longrightarrow \Rcal_{m}\left[\Ccal(S^{j+1})\right],
\]
and thus $\{\Rcal_{m}\left[\Ccal(S^{j})\right] \}_{j}$
is a spectrum, even if not an  $\Omega$-spectrum. 

\begin{definition}     \label{defn: modified stable rank filtration}
If $\Ccal$ is a permutative category, we define the spectrum 
$\Rcal_{m}\kay\Ccal:= \{\Rcal_{m}\left[\Ccal(S^{j})\right]  \}_{j}$
to be the ``$m$-th modified stable rank filtration'' of the $K$-theory
spectrum $\kay\Ccal$.  
\end{definition}

The terminology ``modified stable rank filtration" is justified in
Section~\ref{section: comparison modified to Rognes}, where we compare
this filtration to that of Rognes and show that they are closely
related in result as well as in construction. 

\begin{example}
\hfill
\begin{enumerate}
\item If $\Ccal=\naturals$, then by definition we have 
$\Rcal_m\left[\Ccal(X)\right]=\Sp^{m}(X)$, and so 
$\Rcal_{m}\HZ=\Rcal_{m}\kay\naturals=\Sp^{m}(\Sphere)$.
\item Let $\Ccal$ be the skeletal category of pointed finite pointed sets with
objects $\n$, morphisms given by isomorphisms, and permutative
structure given by wedge sum with, for each $n$ and $k$, 
a fixed choice of isomorphism of $\n\vee \kund$ with 
$\underline{n+k}$. It is well known that the associated spectrum 
$\kay\Ccal$ is the sphere spectrum $\Sphere$. 

The usual model for $\Ccal(X)$ is 
\[
\left(\coprod_{i\in\naturals} 
    E\Sigma_{i}\times_{\Sigma_{i}}X^{i}\right)
          \ / \sim
\]
where the identification $\sim$ identifies points in the $i$-th summand
to the $(i-1)$-st summand if they have coordinates at the basepoint of 
$X$. The augmentation to $\Sp^{\infty}(X)$ collapses each $E\Sigma_{i}$
to a point. We can see from this model that if $\Ccal$ is finite pointed
sets, then 
\[
\Rcal_{m}[\Ccal(X)]/\Rcal_{m-1}[\Ccal(X)]
\simeq 
\left(E\Sigma_{m}\right)_{+}\wedge_{\Sigma_{m}} X^{\wedge (m)}
\simeq 
\left(X^{\wedge (m)}\right)_{\hobased\Sigma_{m}}
\]
If $m>1$, then the quotient is out of the stable range, and 
the inclusion $\Rcal_{m-1}\Ccal(-)\rightarrow\Rcal_{m}\Ccal(-)$ 
is actually an augmented stable equivalence of $\Gamma$-spaces. 
The modified stable rank filtration is trivial in 
this case: 
$\Rcal_{0}\kay\Ccal=*$, $\Rcal_{1}\kay\Ccal=\kay\Ccal=\Sphere$, and
$\Rcal_{m}\kay\Ccal/\Rcal_{m-1}\kay\Ccal\simeq *$ for $m>1$. 
\end{enumerate}
\end{example}

\subsection{Sums, products, and pushouts of augmented $\protect\Gamma$-spaces}
\label{sec: sums and products}
In this subsection, we set up machinery to use in
Section~\ref{section: comparison of constructions} for the comparison of
the modified rank filtration constructed in 
Section~\ref{section: filtered Gamma spaces} with the 
filtration constructed in \cite{A-L}. 
The filtration of \cite{A-L} is based on an inductive
bar construction at the level of permutative categories and 
infinite loop spaces. However, we would like to make our comparison
in Section~\ref{section: comparison of constructions} using an inductive 
homotopy pushout construction in the category of augmented
$\Gamma$-spaces. 
Thus our main result of this section,
Proposition~\ref{proposition: two pushouts}, translates 
between the two, giving a strong augmented stable equivalence. 


Homotopy pushouts are based on sums, while bar constructions are 
based on products, so 
our first order of business is to compare sums and products.
Let $F$ and $G$ be $\Gamma$-spaces (not necessarily filtered or
augmented). It is a standard fact that the natural inclusion 
$F\vee G\longrightarrow F\times G$ is a stable equivalence, and we
need to extend this to filtered $\Gamma$-spaces. So let
$F=\{F_m\}$ and $G=\{G_m\}$ be filtered $\Gamma$-spaces.  We define
filtrations of the $\Gamma$-spaces $F\vee G$ and $F\times G$ as follows:
\begin{align} 
(F\vee G)_{m}   &=F_{m}\vee G_{m} 
            \label{eq: define filtration sum}\\
(F\times G)_{m} &=\underset{i+k\le m}{\colim} (F_{i}\times G_{k})
            \label{eq: define filtration product}
\end{align}
The inclusion $F\vee G\longrightarrow F\times G$ respects these
filtrations.

\begin{lemma}\label{lemma: filtered wedge and product}
  If $F$ and $G$ are filtered $\Gamma$-spaces, then the 
  inclusion map $F\vee G\rightarrow F\times G$ is a filtered stable
  equivalence. More generally, for any positive integer $l$ and filtered
  $\Gamma$-spaces $F^1,\ldots, F^l$, the inclusion map
$
F^1\vee\cdots\vee F^l\rightarrow F^1\times\cdots \times F^l
$
is a filtered stable equivalence.
\end{lemma}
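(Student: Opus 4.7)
The plan is to prove the case $l=2$ by induction on the filtration degree $m$ and to deduce the general $l$-fold statement by iteration (factoring $F^1\vee\cdots\vee F^l\hookrightarrow F^1\times\cdots\times F^l$ through intermediates such as $(F^1\vee\cdots\vee F^{l-1})\times F^l$ with its induced filtration). The base case $m=0$ reduces to the standard unfiltered fact that $F_0\vee G_0\to F_0\times G_0$ is a stable equivalence of $\Gamma$-spaces, which reflects the coincidence of wedge and product in the stable homotopy category of connective spectra.

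For the inductive step I would consider the diagram of cofiber sequences
\[
\begin{CD}
(F\vee G)_{m-1} @>>> (F\vee G)_m @>>> (F_m/F_{m-1})\vee(G_m/G_{m-1})\\
@VVV @VVV @VVV\\
(F\times G)_{m-1} @>>> (F\times G)_m @>>> Q_m
\end{CD}
\]
with $Q_m=(F\times G)_m/(F\times G)_{m-1}$. The inductive hypothesis says that the left vertical is a stable equivalence, so by the long exact sequence in stable homotopy and the five lemma it suffices to show that the right vertical is a stable equivalence.

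To identify $Q_m$, decompose $(F\times G)_m=(F\times G)_{m-1}\cup\bigcup_{i+k=m}F_i\times G_k$ and note that two distinct antidiagonal terms $F_i\times G_k$ and $F_{i'}\times G_{k'}$ overlap only inside $(F\times G)_{m-1}$. Applying the formula $(B\times D)/((A\times D)\cup(B\times C))\cong(B/A)\wedge(D/C)$ to the interior cells (those with $i,k\ge 1$), and the half-smash identification $(B\times D)/(A\times D)\cong(B/A)\wedge D_+$ together with the stable splitting $D_+\simeq D\vee S^0$ to the boundary cells $(m,0)$ and $(0,m)$, the cofiber $Q_m$ splits stably as the wedge of the \emph{principal} summands $F_m/F_{m-1}$ and $G_m/G_{m-1}$ (onto which the map from the wedge-side cofiber lands, by direct inspection of where $*_G\in G_0$ and $*_F\in F_0$ are sent under the stable splitting) together with levelwise smash products of two $\Gamma$-spaces, namely $(F_i/F_{i-1})\wedge(G_k/G_{k-1})$ for $i,k\ge 1$ with $i+k=m$, and the boundary terms $(F_m/F_{m-1})\wedge G_0$ and $F_0\wedge(G_m/G_{m-1})$.

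The key observation is that any such levelwise smash $A\wedge B$ of two $\Gamma$-spaces, viewed as the $\Gamma$-space $X\mapsto A(X)\wedge B(X)$, is stably null: the connectivities of $A(S^n)$ and $B(S^n)$ grow with $n$, so $A(S^n)\wedge B(S^n)$ has connectivity growing like $2n$, forcing the associated spectrum to have vanishing stable homotopy. Hence all the off-diagonal summands of $Q_m$ are stably trivial, the right vertical of the cofiber diagram is a stable equivalence, and the induction closes. The main obstacle is this stable-triviality step; it is the one place in the proof that genuinely uses a feature particular to $\Gamma$-spaces rather than a purely formal manipulation of cofiber sequences.
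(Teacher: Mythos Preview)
Your proof is correct, but it follows a different route from the paper's. The paper argues directly at each filtration level $m$, with no induction on $m$: it factors $F_m\vee G_m\to\colim_{i+k\le m}F_i\times G_k$ through $\colim_{i+k\le m}F_i\vee G_k$, observes that $F_m\vee G_m\to\colim_{i+k\le m}F_i\vee G_k$ is an \emph{isomorphism} (the diagram of wedges over the triangle $\{i+k\le m\}$ is the left Kan extension of its restriction to the boundary $\{i\cdot k=0\}$, so the two colimits coincide), and then uses that each $F_i\vee G_k\to F_i\times G_k$ is a stable equivalence together with Reedy cofibrancy to pass to the (homotopy) colimit. Your approach instead inducts on $m$, decomposes the associated-graded piece $Q_m$ explicitly as a wedge of levelwise smashes of subquotient $\Gamma$-spaces, and kills the non-principal summands via the connectivity estimate for $X\mapsto A(X)\wedge B(X)$. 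Both are valid. The paper's argument is shorter because it invokes the unfiltered fact ``$\vee\to\times$ is a stable equivalence'' once per term as a black box, whereas your connectivity argument is in effect a reproof of that same fact; your version has the compensating virtue of making the vanishing of each cross term explicit. One small point: the connectivity growth of $A(S^n)$ that you need is cleanest with the model $S^n=\Delta^n/\partial\Delta^n$, for which $(S^n)_k=*$ when $k<n$ and hence the diagonal $F(S^n)$ is visibly $(n-1)$-connected; and your iteration to general $l$ implicitly uses that $-\times F^l$ preserves filtered stable equivalences, which follows from the same cofibrant-diagram reasoning the paper employs.
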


\begin{proof}
We need to show that for each $m$, the inclusion map 
\[
F_{m}\vee G_{m} \longrightarrow \underset{i+k\le m}{\colim} F_{i}\times G_{k}
\]
is a stable equivalence. We consider the factorization
\begin{equation} \label{eq: factorization}
F_{m}\vee G_{m} 
 \longrightarrow
    \underset{\{i+k\le m\mid i\cdot k=0\}}{\colim} F_{i}\vee G_{k}
 \longrightarrow 
    \underset{i+k\le m}{\colim} F_{i}\vee G_{k}
 \longrightarrow 
    \underset{i+k\le m}{\colim} F_{i}\times G_{k},
\end{equation}
and we assert that each of these maps is a stable equivalence. 
In fact, it is easy to check that the first map is an isomorphism, and
the second map is likewise an isomorphism because the target
diagram is the left Kan extension of the source diagram. Finally,
consider the diagram
\[
\begin{CD}
\underset{i+k\le m}{\hocolim} F_{i}\vee G_{k} 
@>>>
\underset{i+k\le m}{\hocolim} F_{i}\times G_{k} 
\\
@VVV @VVV\\
\underset{i+k\le m}{\colim} F_{i}\vee G_{k}
@>>>
\underset{i+k\le m}{\colim} F_{i}\times G_{k}.
\end{CD}
\]
The vertical maps are equivalences, because both diagrams are
cofibrant~\cite{D-S}, and thus colimits are equivalent to homotopy colimits.
The top row is a stable equivalence because each $F_{i}\vee G_{k}
\rightarrow F_{i}\times G_{k}$ is a stable equivalence; thus the
bottom row is a stable equivalence, completing the proof
that~\eqref{eq: factorization} is a stable equivalence.

The more general statement follows by induction.
\end{proof}

If $F$ and $G$ are augmented $\Gamma$-spaces, then there are 
natural augmentations on the $\Gamma$-spaces $F\vee G$ and $F\times G$ 
defined by the compositions
\begin{align}
F\vee G&\longrightarrow \Sp^{\infty}\vee\Sp^{\infty} 
   \xrightarrow{\mbox{\tiny{ fold}}} \Sp^{\infty}
   \label{eq: sum augmentation}
\\
F\times G &\longrightarrow \Sp^\infty\times \Sp^\infty 
   \xrightarrow{\ +\ } \Sp^\infty.
   \label{eq: product augmentation}
\end{align}
The filtrations of $F\vee G$ and $F\times G$ that are induced
by these augmentations turn out to coincide with those
defined by \eqref{eq: define filtration sum} 
and~\eqref{eq: define filtration product}, as established in the
following lemma.

\begin{lemma}
If $F$ and $G$ are augmented $\Gamma$-spaces and $F\vee G$
and $F\times G$ are augmented by \eqref{eq: sum augmentation}
and~\eqref{eq: product augmentation}, then 
\begin{align*}
\Rcal_{m}(F\vee G)
    &\cong\left(\Rcal_{m}F\right)\vee\left(\Rcal_{m}G\right)\\
\Rcal_{m}(F\times G)
    &\cong\underset{i+k\le m}{\colim} \left(\Rcal_{i}F\times \Rcal_{k}G\right).
\end{align*}
\end{lemma}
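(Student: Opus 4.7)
The plan is to compute both filtrations directly from the strict pullback definition $\Rcal_m H = H \times_{\Sp^\infty} \Sp^m$. Since the filtration is determined objectwise (in fact, degreewise as simplicial sets), it suffices to evaluate everything on a generic pointed simplicial set $X$ and compute as pointed sets in each simplicial degree; naturality in $X$ will be automatic.

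For the wedge, I would first use the hypothesis that $F$ and $G$ are augmented in the sense of Definition~\ref{defn: augmentation}: the only element of $F(X)$ (resp.\ $G(X)$) mapping to the basepoint of $\Sp^\infty(X)$ is the basepoint itself. Consequently $(F \vee G)(X)$ is literally $F(X) \sqcup G(X)$ with the two basepoints identified, and the augmentation~\eqref{eq: sum augmentation} acts by $\epsilon_F$ on $F(X)$ and by $\epsilon_G$ on $G(X)$. Pulling back $\Sp^m(X) \hookrightarrow \Sp^\infty(X)$ along this description splits into a pullback on each wedge summand, giving $\Rcal_m F(X) \vee \Rcal_m G(X)$ as claimed.

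For the product, the augmentation~\eqref{eq: product augmentation} factors as $F \times G \to \Sp^\infty \times \Sp^\infty \xrightarrow{+} \Sp^\infty$, so I would compute $\Rcal_m(F \times G)$ in two stages: first pull $\Sp^m \hookrightarrow \Sp^\infty$ back along $+$, then pull the result back along $F \times G \to \Sp^\infty \times \Sp^\infty$. The crucial identification is
\[
+^{-1}\bigl(\Sp^m(X)\bigr) \;=\; \bigcup_{i+k\le m} \Sp^i(X) \times \Sp^k(X).
\]
This follows from viewing $\Sp^\infty(X)$ as the free abelian monoid on the non-basepoint simplices of $X$, with $\Sp^m(X)$ the submonoid of elements of length at most $m$; the addition is concatenation, which adds lengths. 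The union on the right is filtered by inclusions and hence agrees with the corresponding colimit. The second pullback then commutes with this filtered colimit of inclusions, producing $\underset{i+k\le m}{\colim}\,\Rcal_i F(X) \times \Rcal_k G(X)$, which is the asserted formula.

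I do not anticipate a substantive obstacle: the lemma is essentially an unwinding of definitions. The two points requiring any care are the free-monoid identification of $+^{-1}(\Sp^m(X))$ and the (standard) fact that strict pullbacks commute with the filtered colimit along the inclusions $\Sp^i(X) \times \Sp^k(X) \hookrightarrow \Sp^{i'}(X) \times \Sp^{k'}(X)$. Once these are in hand, the asserted isomorphisms are formal.
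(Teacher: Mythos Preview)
Your argument is correct and is essentially the same two-stage pullback computation that the paper carries out: the paper presents it as ``inspection'' of the stacked pullback squares $\colim_{i+k\le m}(\Rcal_i F\times\Rcal_k G)\to F\times G\to \Sp^\infty\times\Sp^\infty\to\Sp^\infty$ over $\colim_{i+k\le m}(\Sp^i\times\Sp^k)\to\Sp^m$, which is exactly your factorization through $+^{-1}(\Sp^m)$ followed by the pullback along $\epsilon_F\times\epsilon_G$. Your treatment of the wedge is likewise what the paper intends by ``similar, but easier.''
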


\begin{proof}
The lemma follows for the product by inspection of the two 
stacked strict pullback diagrams
\[
\begin{CD}
\underset{i+k\le m}{\colim} \left(\Rcal_{i}F\times \Rcal_{k}G\right)
@>>> F\times G\\
@VVV @VVV\\
\underset{i+k\le m}{\colim} 
       \left(\Sp^{i}\times\Sp^{k}\right)
@>>> \Sp^{\infty}\times \Sp^{\infty}\\
@VV{\colim(+)}V @VV{+}V\\
\Sp^{m} @>>> \Sp^{\infty},
\end{CD}
\]
because the outer square is the pullback that identifies
$\Rcal_{m}(F\times G)$. The proof for the wedge is similar, but easier. 
\end{proof}

\begin{corollary}   \label{cor: augmented equivalence}
If $F$ and $G$ are augmented $\Gamma$-spaces and $F\vee G$
and $F\times G$ are augmented by \eqref{eq: sum augmentation}
and~\eqref{eq: product augmentation}, then the natural inclusion
$F\vee G \rightarrow F\times G$ is a strong augmented stable
equivalence. 
\end{corollary}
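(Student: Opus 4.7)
My plan is to derive this corollary directly by combining the two immediately preceding results. By definition, showing that the inclusion $F \vee G \to F \times G$ is a strong augmented stable equivalence means exhibiting, for each $m$, a stable equivalence $\Rcal_m(F \vee G) \to \Rcal_m(F \times G)$. So the strategy is simply to identify both sides via the preceding lemma and then invoke Lemma~\ref{lemma: filtered wedge and product}.

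More concretely, first I would apply the preceding lemma to rewrite
\[
\Rcal_m(F \vee G) \cong \Rcal_m F \vee \Rcal_m G
\qquad\text{and}\qquad
\Rcal_m(F \times G) \cong \underset{i+k\le m}{\colim}\bigl(\Rcal_i F \times \Rcal_k G\bigr),
\]
under which the inclusion $F \vee G \to F \times G$ corresponds to the canonical inclusion of the $m$-th stage of the wedge filtration into the $m$-th stage of the product filtration of the two filtered $\Gamma$-spaces $\{\Rcal_i F\}$ and $\{\Rcal_k G\}$ (in the sense of \eqref{eq: define filtration sum} and \eqref{eq: define filtration product}). This identification is immediate from the construction of the augmentation on $F \vee G$ and $F \times G$ and the fact that $\Rcal_\bullet$ is defined by strict pullback from the symmetric-power filtration of $\Sp^\infty$.

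Next, I would apply Lemma~\ref{lemma: filtered wedge and product} to the two filtered $\Gamma$-spaces $\{\Rcal_i F\}$ and $\{\Rcal_k G\}$. That lemma says precisely that the inclusion of the filtered wedge into the filtered product is a filtered stable equivalence, i.e.\ a stable equivalence at each filtration level $m$. Combining this with the isomorphisms above yields that $\Rcal_m(F \vee G) \to \Rcal_m(F \times G)$ is a stable equivalence for every $m$, which is the definition of a strong augmented stable equivalence.

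There is no real obstacle here: the work is already done in Lemma~\ref{lemma: filtered wedge and product} (which does the actual comparison of wedges and products via the factorization \eqref{eq: factorization} and cofibrancy of the relevant diagrams) and in the preceding lemma (which identifies the augmentation-induced filtration of a wedge or product). The only thing to check carefully is the compatibility of the two identifications of $\Rcal_m(F \times G)$, i.e.\ that the strict pullback defining $\Rcal_m(F \times G)$ agrees with the left Kan extension $\colim_{i+k\le m}\Rcal_i F \times \Rcal_k G$ under the inclusion into $F \times G$; this is a direct inspection using that the augmentation on $F \times G$ is defined by addition on $\Sp^\infty \times \Sp^\infty$.
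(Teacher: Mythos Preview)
Your proposal is correct and matches the paper's approach exactly: the corollary is stated without proof because it follows immediately by combining the preceding lemma's identification of $\Rcal_m(F\vee G)$ and $\Rcal_m(F\times G)$ with Lemma~\ref{lemma: filtered wedge and product} applied to the filtered $\Gamma$-spaces $\{\Rcal_i F\}$ and $\{\Rcal_k G\}$.
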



Finally, we come to our goal for this subsection, which is 
to compare bar constructions and homotopy pushouts of $\Gamma$-spaces. 
We call an augmented $\Gamma$-space $F$ an ``augmented
monoid'' if there is an associative and unital map of augmented
$\Gamma$-spaces $F\times F\rightarrow F$. 
Suppose given a diagram
\begin{equation}    \label{eq: pushout of monoids}
F_{1}\longleftarrow F_{0} \longrightarrow F_{2}
\end{equation}
of augmented monoids. We can define the augmented $\Gamma$-space
$\BAR(F_{1},F_{0},F_{2})$ as the diagonal of the standard simplicial object
\[
\q\mapsto F_{1}\times (F_{0})^q\times F_{2},
\]
where the augmentation is given levelwise by~\eqref{eq: product augmentation}.

On the other hand, we can define $F_{12}$ as the objectwise homotopy 
pushout of~\eqref{eq: pushout of monoids}, and $F_{12}$ has an augmentation
because there is a (strictly) commuting diagram
\[
\begin{CD}
F_{0}(X) @>>> F_{1}(X)\\
@VVV @VVV\\
F_{2}(X) @>>> \Sp^{\infty}(X)\\
\end{CD}
\]
A common canonical model for
$F_{12}$ is the geometric realization (i.e., diagonal) of the
simplicial object
\[
\q\mapsto F_{1}\vee \underbrace{F_{0}\vee\cdots\vee F_{0}}_{q}\vee F_{2}.
\]
With this model, we see that, levelwise, the augmentation of $F_{12}$ is 
just the wedge of the augmentations of $F_{0}$, $F_{1}$, and 
$F_{2}$. There is a natural augmentation-preserving map
$F_{12}\longrightarrow \BAR(F_{1}, F_{0}, F_{2})$ induced 
by the simplicial map that is given in degree $q$ by the inclusion map
\[
 F_{1}\vee (F_{0})^{\vee q}\vee F_{2} 
       \longrightarrow F_{1}\times (F_{0})^q\times F_{2}.
\]
We now have all of the ingredients for the main goal of this
subsection. 

\begin{proposition}\label{proposition: two pushouts}
\propositionpushouttext
\end{proposition}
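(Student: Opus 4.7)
The plan is to realize both sides of the comparison map as geometric realizations (diagonals) of simplicial augmented $\Gamma$-spaces and to compare them levelwise. As indicated in the discussion just before the proposition, $F_{12}$ is the diagonal of the simplicial object $[q]\mapsto F_{1}\vee (F_{0})^{\vee q}\vee F_{2}$, $\BAR(F_{1},F_{0},F_{2})$ is the diagonal of $[q]\mapsto F_{1}\times (F_{0})^{q}\times F_{2}$, and the natural comparison map is the realization of the simplicial map given in each degree by the inclusion of a wedge sum into the corresponding product.

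Next I would verify that the filtration functor $\Rcal_{m}$ commutes with geometric realization of simplicial augmented $\Gamma$-spaces. This is essentially formal: $\Rcal_{m}F$ is defined by the strict pullback $F\times_{\Sp^{\infty}}\Sp^{m}$, and both $\Sp^{m}$ and $\Sp^{\infty}$ are constant in the simplicial direction, so the claim reduces to the elementary fact that the diagonal of a bisimplicial set commutes with strict pullbacks along a constant simplicial set. Levelwise, the augmentations of our two simplicial objects are the ones from \eqref{eq: sum augmentation} and \eqref{eq: product augmentation}, so under this identification the augmentation on each realization matches the augmentation used in the statement of the proposition.

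Granted this, Corollary~\ref{cor: augmented equivalence} asserts that at each simplicial degree $q$ the inclusion $F_{1}\vee (F_{0})^{\vee q}\vee F_{2}\hookrightarrow F_{1}\times (F_{0})^{q}\times F_{2}$ is a strong augmented stable equivalence (using Lemma~\ref{lemma: filtered wedge and product} for the $q$-fold product, and the fact that a wedge is naturally augmented by folding). Applying $\Rcal_{m}$ levelwise therefore produces a map of simplicial $\Gamma$-spaces that is a stable equivalence in each simplicial degree.

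The main obstacle is the final step: deducing that the realization of this levelwise stable equivalence is itself a stable equivalence. This requires the two simplicial $\Gamma$-spaces to be sufficiently well-behaved (proper, or Reedy cofibrant). For $\BAR(F_{1},F_{0},F_{2})$, the degeneracies are induced by the units of the augmented monoid structures, which are cofibrations; for the wedge-sum model of $F_{12}$, the degeneracies are inclusions of wedge summands, again cofibrations. Both simplicial objects are therefore proper, and the standard realization lemma for simplicial $\Gamma$-spaces (equivalently, for the associated simplicial prespectra) implies that the realization of a levelwise stable equivalence is a stable equivalence. Combining this with the compatibility of $\Rcal_{m}$ with realization yields that $\Rcal_{m}F_{12}\rightarrow \Rcal_{m}\BAR(F_{1},F_{0},F_{2})$ is a stable equivalence for every $m$, establishing the desired strong augmented stable equivalence.
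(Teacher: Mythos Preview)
Your proposal is correct and follows essentially the same approach as the paper: invoke Corollary~\ref{cor: augmented equivalence} to obtain a strong augmented stable equivalence in each simplicial degree, and then pass to geometric realizations. The paper's proof is just those two sentences; your additional care with the compatibility of $\Rcal_{m}$ with realization and with properness of the simplicial objects makes explicit what the paper leaves implicit.
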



\begin{proof}
By Corollary~\ref{cor: augmented equivalence}, 
$F_{12}\longrightarrow \BAR\left(F_{1}, F_{0}, F_{2}\right)$
is a strong augmented stable equivalence in each simplicial degree. It follows
that the induced map of geometric realizations is a strong augmented stable
equivalence.
\end{proof}

\subsection{\protect Very special $\Gamma$-spaces}
\label{section: very special Gamma spaces}

In this subsection, we study the structure of an augmented $\Gamma$-space
a little more closely.  We consider a map $F\rightarrow G$ of
augmented $\Gamma$-spaces that are ``very special'' in the sense of
Segal. We show that if $\Rcal_{i}F\rightarrow\Rcal_{i}G$ is an
equivalence for $i<m$, then we can compare $\Rcal_{m}F$
and $\Rcal_{m}G$ using the values of $F$ and $G$ on the
space $S^{0}$ (Proposition~\ref{proposition: equivalent cats}). Further, unlike
the previous subsection, this result does not require stabilization, but 
is true on the level of spaces rather than spectra. 
It generalizes the result of Theorem~1.1 of~\cite{Lesh-TAMS}
(Remark~\ref{remark: supersedes}).

The main tool for the comparison is a particular instance of the 
assembly map discussed in Section~\ref{section: filtered Gamma spaces}. 
Let $F$ be an augmented $\Gamma$-space and let $\kund$ denote the 
set $\{0,1,...,k\}$, with $0$ as the basepoint.
We can define a new augmented $\Gamma$-space $X\mapsto F(\one)\wedge X$
by using the assembly map to provide an augmentation via the composition
\[
F(\one)\wedge X\longrightarrow F(\one\wedge X)
                \xrightarrow{\ \cong\ }F(X)
                \longrightarrow\Sp^{\infty}(X). 
\]
By construction, this gives a map of augmented $\Gamma$-spaces 
$F(\one)\wedge X \longrightarrow F(X)$. 

\begin{example}  \label{example: assembly}
\hfill
\begin{enumerate}
\item
If $F=\Sp^\infty$, then $F(\one)=\naturals$, and the map
$\Sp^{\infty}(\one)\wedge X \longrightarrow \Sp^{\infty}(X)$ is given by
$m\wedge x\mapsto mx$. \label{item: sp}
\item
If $F$ is the $\Gamma$-space associated to the Segal construction for
category of finite pointed sets (so that for connected $X$ we have 
$F(X)\simeq \Omega^{\infty}\Sigma^{\infty}X$), then 
$F(\one)\simeq \bigvee_{i} \left(B\Sigma_{i}\right)_{+}$.
On each component of $F(\one)\wedge X$,
the map $F(\one)\wedge X \longrightarrow F(X)$ becomes the map
\[
B\Sigma_{i}\mbox{}_{+}\wedge X \rightarrow
     \left(\coprod E\Sigma_{i}\times_{\Sigma_{i}} X^{i}\right)/\sim
\]
induced by the $i$-fold diagonal map $X\rightarrow X^{i}$. 
\end{enumerate}
\end{example}

We need more detail on the filtration associated to the augmentation
of the $\Gamma$-space $X\mapsto F(\one)\wedge X$.
Example~\ref{example: assembly}\eqref{item: sp} shows that
$\Rcal_{m}\left[\Sp^{\infty}(\one)\wedge X\right] =
\Sp^{m}(\one)\wedge X $, and this turns out to be the key to the
general case, as shown in the proof of the following lemma.

\begin{lemma}   \label{lemma: filtration of assembly}
$\Rcal_{m}\left[F(\one)\wedge X \right]
     =\Rcal_{m}\left[F(\one) \right]\wedge X$.
\end{lemma}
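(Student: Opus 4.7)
The plan is to exhibit both sides as the same iterated strict pullback. By definition, $\Rcal_{m}$ of the augmented $\Gamma$-space $X \mapsto F(\one)\wedge X$, evaluated at $X$, is
\[
\Rcal_{m}\left[F(\one)\wedge X\right] = \left(F(\one)\wedge X\right) \times_{\Sp^{\infty}(X)} \Sp^{m}(X).
\]
Using naturality of the assembly map with respect to $\epsilon\colon F \to \Sp^{\infty}$ together with Example~\ref{example: assembly}(i), I would factor the augmentation as
\[
F(\one)\wedge X \xrightarrow{\,\epsilon \wedge \mathrm{id}\,} \Sp^{\infty}(\one)\wedge X = \naturals\wedge X \xrightarrow{\,\mu\,} \Sp^{\infty}(X), \qquad \mu(n \wedge x) = n\cdot x,
\]
and then compute the pullback in two stages through this factorization.

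For the first stage, I would observe that $n\cdot x \in \Sp^{m}(X)$ if and only if $x$ is the basepoint or $n \leq m$, which identifies
\[
\left(\naturals\wedge X\right) \times_{\Sp^{\infty}(X)} \Sp^{m}(X) \;=\; \Sp^{m}(\one)\wedge X \;\subseteq\; \Sp^{\infty}(\one)\wedge X.
\]
For the second stage, I would pull the resulting inclusion back along $\epsilon \wedge \mathrm{id}$. Since $\Sp^{m}(\one) \hookrightarrow \Sp^{\infty}(\one)$ is a monomorphism of pointed sets, the functor $(-) \wedge X$ preserves this pullback, giving
\[
\left(F(\one) \wedge X\right) \times_{\Sp^{\infty}(\one)\wedge X} \left(\Sp^{m}(\one)\wedge X\right) \;=\; \bigl(F(\one) \times_{\Sp^{\infty}(\one)} \Sp^{m}(\one)\bigr) \wedge X \;=\; \Rcal_{m}\left[F(\one)\right] \wedge X.
\]
Composing the two pullbacks produces the claimed equality.

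The only point requiring any care is the smash--pullback interchange used in the second stage. This reduces to the elementary assertion that for a monomorphism $A \hookrightarrow B$ of pointed sets, a pointed map $C \to B$, and a pointed space $X$, the natural map $\left(C \times_{B} A\right) \wedge X \to \left(C \wedge X\right) \times_{B \wedge X} \left(A \wedge X\right)$ is a bijection; a quick case split on whether the $X$-coordinate is the basepoint settles it. Thus there is no serious obstacle, and the argument is essentially an unwinding of the definition of $\Rcal_{m}$ via the universal augmentation $\Sp^{\infty}$.
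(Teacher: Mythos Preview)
Your proof is correct and follows essentially the same route as the paper: factor the augmentation through $\Sp^{\infty}(\one)\wedge X$ via naturality of assembly, then compute the inverse image of $\Sp^{m}(X)$ in two steps. The paper phrases this as ``going around the commuting square the other way'' and simply asserts the two inverse-image identifications, whereas you make the iterated-pullback structure and the smash--pullback interchange explicit; the content is the same.
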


\begin{proof}
We consider the following diagram of augmented $\Gamma$-spaces: 
\begin{equation}\label{eq: augmentation on assembly}
\begin{CD}
F(\one)\wedge X @>>> F(\one\wedge X)
         @.  \cong @.F(X)\\ 
@VVV  @VVV\\
\Sp^{\infty}(\one)\wedge X @>>> \Sp^{\infty}(\one\wedge X) 
         @.\ \cong\ @.\Sp^{\infty}(X).
\end{CD}
\end{equation}
The horizontal maps are assembly maps for the $\Gamma$-spaces
$F$ and $\Sp^{\infty}$, and the vertical maps are given by the
augmentation of $F$. The diagram strictly commutes, because the 
assembly map is natural in maps of $\Gamma$-spaces.

By definition, the augmentation of $F(\one)\wedge X$ is given by the
clockwise path from top left to bottom right, and thus
$\Rcal_{m}\left[F(\one)\wedge X \right]$ is defined as the inverse
image of $\Sp^{m}(X)\subseteq\Sp^{\infty}(X)$ by this path. However,
since \eqref{eq: augmentation on assembly} commutes, we can go the
other way around the square. As noted above, the inverse image of
$\Sp^{m}(X)$ in $\Sp^{\infty}(\one)\wedge X$ is 
$\Sp^{m}(\one)\wedge X $. Because the inverse image of
$\Sp^{m}(\one)\wedge X $ in $F(\one)\wedge X$ is, by definition,
$\Rcal_{m}\left[F(\one) \right]\wedge X$, the lemma follows. 
\end{proof}

Now we want to use the assembly map to compare two $\Gamma$-spaces. 
Let $F\rightarrow G$ be an augmentation-preserving map of 
augmented, very special $\Gamma$-spaces. Then we have
a commutative diagram 
\[
\begin{CD}
\Rcal_{m}\left[F(\one)\wedge X\right] @>>> \Rcal_{m}\left[F(X)\right] \\
@VVV @VVV \\
\Rcal_{m}\left[G(\one)\wedge X\right] @>>> \Rcal_{m}\left[G(X) \right].
\end{CD}
\]
By using Lemma~\ref{lemma: filtration of assembly} to replace
$\Rcal_{m}\left[F(\one)\wedge X\right]$ and 
$\Rcal_{m}\left[G(\one)\wedge X\right]$ with
$\Rcal_{m}[F(\one)]\wedge X$ and $\Rcal_{m}[G(\one)]\wedge X$,
respectively, we obtain the diagram of the following theorem,
which is our goal for this subsection. 

\begin{proposition}  \label{proposition: equivalent cats}
\propositionequivalentcatstext
\end{proposition}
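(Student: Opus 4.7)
The plan is to induct on $m$, proving the whole proposition at each level. The base case $m=0$ is trivial: $\Rcal_0$ is the constant $\Gamma$-space at a point, so the equivalence assertion is vacuous and each corner of the pushout square is contractible.

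For the inductive step, assume the proposition at level $m$ and suppose the hypothesis at level $m+1$: that $\Rcal_i[F(\one)]\to\Rcal_i[G(\one)]$ is a homotopy equivalence for $i\leq m$. Since this includes the hypothesis at level $m$, the induction hypothesis provides the equivalence $\Rcal_i[F(X)]\to\Rcal_i[G(X)]$ for $i<m$ and all $X$, along with the pushout square at level $m$. To extend the equivalence to $i=m$, the plan is to invoke the pushout square at level $m$: its left vertical map $\Rcal_m[F(\one)]\wedge X\to\Rcal_m[G(\one)]\wedge X$ is now an equivalence by the strengthened hypothesis, which forces the right vertical $\Rcal_m[F(X)]\to\Rcal_m[G(X)]$ to be an equivalence as well.

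It then remains to establish the pushout at level $m+1$ and to check that it persists under $\Rcal_j$ for every $j$. When $j<m+1$, Lemma~\ref{lemma: filtration of assembly} rewrites the result as the analogous square at filtration $j$, whose vertical maps are equivalences by the first conclusion, so it is trivially a homotopy pushout. When $j\geq m+1$, the augmentations of all four corners already factor through $\Sp^{m+1}$, so applying $\Rcal_j$ leaves the square unchanged. Thus it suffices to show the square at level $m+1$ is a homotopy pushout, and the plan is to identify its two horizontal cofibers. Exploiting that $F$ is very special, for $X=\n$ a finite pointed set the identification $F(\n)\simeq F(\one)^n$ realizes $\Rcal_{m+1}[F(\n)]$ as the subspace of tuples $(x_1,\ldots,x_n)$ with $\sum\epsilon(x_i)\leq m+1$, and $\Rcal_{m+1}[F(\one)]\wedge\n$ as the subspace of tuples with at most one non-basepoint entry. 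The cofiber therefore consists of tuples with at least two non-basepoint entries; each such entry has augmentation between $1$ and $m$, so the cofiber is built only from the pieces $\Rcal_i[F(\one)]$ with $i\leq m$. By the hypothesis at level $m+1$, these pieces agree with the corresponding pieces of $G$, making the horizontal cofibers equivalent and yielding the pushout. For a general pointed simplicial set $X$, $F(X)$ is the diagonal of $[k]\mapsto F(X_k)$, and the same argument applies levelwise before passing to the diagonal.

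The main obstacle is to make the decomposition of $\Rcal_{m+1}[F(\n)]$ precise, since the very special property provides only a weak equivalence $F(\n)\simeq F(\one)^n$, whereas $\Rcal_{m+1}$ is defined as a strict pullback. Handling this will probably require either replacing $F$ by a Quillen-equivalent strictification or invoking the simplicial bar-construction models from Segal's machine, so that the filtration at each finite set is transparently determined by its values on $F(\one)$.
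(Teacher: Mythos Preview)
Your approach is essentially correct and parallels the paper's argument, though organized differently. The paper does not induct on $m$; instead it proves both conclusions directly for a fixed $m$ by first establishing Lemma~\ref{lemma: product filter}, which gives
\[
\Rcal_m[F(\kund)] \;\simeq\; \colim_{i_1+\cdots+i_k\leq m} \Rcal_{i_1}[F(\one)]\times\cdots\times\Rcal_{i_k}[F(\one)].
\]
With this in hand, the paper writes the square at level $m$ (for $X=\kund$) as a homotopy colimit over the poset of $k$-tuples of small squares, each of which is a homotopy pushout: when all $i_j<m$ both vertical maps are equivalences by hypothesis, and when some $i_j=m$ the horizontal maps are isomorphisms. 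Your cofiber description---tuples with at least two nonbasepoint entries, hence each entry in filtration $\leq m$---is a repackaging of exactly this decomposition. The induction you set up is valid, but once Lemma~\ref{lemma: product filter} is available it becomes unnecessary: the equivalence for $i<m$ follows immediately from the colimit formula, since every index $i_j$ is then strictly below $m$.

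The ``main obstacle'' you flag at the end is precisely what Lemma~\ref{lemma: product filter} handles, and the resolution is simpler than you anticipate: no strictification or alternative bar models are needed. The key observation is that $\Sp^\infty(\kund)\cong\Sp^\infty(\one)^k\cong\naturals^k$ is \emph{discrete}, so the strict pullback defining $\Rcal_m$ is simply the preimage of a subset of components. A weak equivalence $F(\kund)\to F(\one)^k$ lying over an \emph{isomorphism} of discrete bases therefore restricts to a weak equivalence on any such preimage. This is the entire content of the paper's proof of Lemma~\ref{lemma: product filter}, and it dissolves the difficulty you raise.
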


Before tackling the proof of Proposition~\ref{proposition: equivalent cats}, we
establish a lemma about the righthand side of its diagram
in the case that $X$ is a finite pointed set. If $k\in\naturals$, then
because $F$ is very special, we have an equivalence 
$F(\kund) \rightarrow F(\one)^{k}$ induced by the $k$ collapse maps.  
We would like to understand $\Rcal_{m} [F(\kund)]$ in terms of $F(\one)^{k}$.

\begin{lemma}         \label{lemma: product filter}
Suppose that $F$ is an augmented very special $\Gamma$-space. Then
\begin{equation*}    
\Rcal_{m} [F(\kund)]
    \simeq \underset{i_{1}+\dots+i_{k}\leq m}{\colim}
       \Rcal_{i_{1}} [F(\one)]\times\dots\times \Rcal_{i_{k}} [F(\one)]
\end{equation*}
\end{lemma}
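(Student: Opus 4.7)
The overall plan is to use the very special property to replace $F(\kund)$ with $F(\one)^{k}$, then identify the subset $\Sp^{m}(\kund)\subset\Sp^{\infty}(\kund)$ as a union of products of smaller symmetric powers inside $\naturals^{k}$, and conclude by passing this union through the pullback that defines $\Rcal_{m}$.

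More concretely, the $k$ collapse maps $\kund\to\one$ induce an augmentation-compatible map $F(\kund)\to F(\one)^{k}$ that is a weak equivalence by hypothesis, and the corresponding map $\Sp^{\infty}(\kund)\to \Sp^{\infty}(\one)^{k}=\naturals^{k}$ is an isomorphism. Since $\Sp^{\infty}(\kund)$ is a discrete simplicial set, the augmentation $F(\kund)\to\Sp^{\infty}(\kund)$ is automatically a Kan fibration, so the strict pullback that defines $\Rcal_{m}[F(\kund)]$ is already a homotopy pullback and is thus preserved under the substitution of $F(\one)^{k}$ for $F(\kund)$. Hence $\Rcal_{m}[F(\kund)]$ is weakly equivalent to the strict pullback of $F(\one)^{k}\to\naturals^{k}$ along the inclusion $\Sp^{m}(\kund)\hookrightarrow\naturals^{k}$, where $\Sp^{m}(\kund)$ is identified with $\{(n_{1},\ldots,n_{k})\in\naturals^{k}:n_{1}+\cdots+n_{k}\leq m\}$.

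This subset decomposes as the union, equivalently the colimit along inclusions, of the products $\prod_{j}\Sp^{i_{j}}(\one)$ over tuples with $i_{1}+\cdots+i_{k}\leq m$; one checks this by taking $i_{j}=n_{j}$. Because pulling back along a map into a discrete set is just taking a preimage, it commutes with this union, so the pullback of $F(\one)^{k}\to\naturals^{k}$ over $\Sp^{m}(\kund)$ is the colimit over $i_{1}+\cdots+i_{k}\leq m$ of the pullbacks over $\prod_{j}\Sp^{i_{j}}(\one)$. Each such pullback factors as a product $\prod_{j}\Rcal_{i_{j}}[F(\one)]$, directly from the definition of $\Rcal_{i_{j}}[F(\one)]$ and the fact that pullbacks distribute over products. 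Assembling these steps yields the claimed equivalence.

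The point requiring care, rather than a deep obstacle, is justifying that one may replace $F(\kund)$ by $F(\one)^{k}$ inside the strict pullback defining $\Rcal_{m}$, and simultaneously that strict pullback commutes with the relevant colimit. Both are licensed by the same observation: $\Sp^{\infty}(\kund)$ is discrete, so the augmentation is a fibration, pullback equals homotopy pullback, and pullback over a subset of a discrete set is just the preimage and thus preserves unions.
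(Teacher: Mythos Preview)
Your proof is correct and follows essentially the same route as the paper: both use the very special equivalence $F(\kund)\simeq F(\one)^{k}$, identify $\Sp^{m}(\kund)\subset\naturals^{k}$ with the colimit of the products $\prod_{j}\Sp^{i_{j}}(\one)$, and exploit discreteness of $\Sp^{\infty}(\kund)$ to ensure the strict pullback defining $\Rcal_{m}$ behaves well under this substitution. Your explicit remark that the augmentation is a Kan fibration (so strict pullback equals homotopy pullback) is exactly the content of the paper's observation that the bottom row is an inclusion of discrete sets and hence a homotopy equivalence on upper right corners suffices.
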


\begin{proof}
First consider the special case of symmetric powers. 
Let
\[
C:= \underset{i_{1}+\dots+i_{k}\leq m}{\colim}
       \Sp^{i_{1}}(\one)\times\dots\times \Sp^{i_{k}}(\one)
       \ \ \subset\Sp^{\infty}(\one)^{k},
\]
where the colimit is taken over the poset of $k$-tuples with
$(i_{1},\dots,i_{k})\leq (i'_{1},\dots,i'_{k})$ if $i_{j}\leq i'_{j}$
for all $j=1,\dots,k$. The isomorphism of discrete sets
$\Sp^{\infty}(\kund)\xrightarrow{\cong}\Sp^{\infty}(\one)^{k}$
gives us an isomorphism between $\Sp^{m}(\kund)$ and $C$. 

Then we consider the map of diagrams
\[
\left( \begin{CD}
@.  F(\kund) \\
@. @VVV\\
\Sp^m(\kund) @>>> \Sp^{\infty}(\kund)  \\
 \end{CD}\right)
\hspace{2.5em}\xrightarrow{\hspace{4em}}\hspace{2.5em}
\left( \begin{CD}
@.  F(\one)^{k}\\
@. @VVV\\
C @>>> \Sp^{\infty}(\one)^{k}
 \end{CD}\right),
 \]
which is an isomorphism on the lower row.
Indeed, each bottom row is an inclusion of discrete sets,
and the map of upper right corners is a homotopy equivalence, which is
sufficient to guarantee that the map between the (strict) pullbacks
is a homotopy equivalence. However, the pullback of the left
diagram is $\Rcal_{m}[F(\kund)]$ by definition, and the pullback
of the right diagram is 
$\underset{i_{1}+\dots+i_{k}\leq m}{\colim}
       \Rcal_{i_{1}} [F(\one)]\times\dots\times \Rcal_{i_{k}} [F(\one)]$. 
The lemma follows. 


\end{proof}

\begin{proof}[Proof of Proposition~\ref{proposition: equivalent cats}]
  It suffices to establish the proposition for $X=\kund$ for arbitrary
  $k\in\naturals$. Furthermore, since $\Rcal_{i}\Rcal_{m}=\Rcal_{i}$
  if $i\leq m$ and $\Rcal_{j}\Rcal_{m}=\Rcal_{m}$ if $j>m$, it
  actually suffices to show that for $i\leq m$ we have a homotopy
  pushout diagram
\begin{equation*}
\begin{CD}
\Rcal_{i}[F(\one)]\wedge\kund @>>> \Rcal_{i} [F(\kund)]\\
@VVV @VVV\\
\Rcal_{i}[G(\one)]\wedge\kund @>>> \Rcal_{i} [G(\kund)].
\end{CD}
\end{equation*}
We will deal with the case $i=m$ in detail. The case $i<m$ is proved
in the same way, except it is easier.

By Lemma~\ref{lemma: product filter}, we need to show that the
following is a homotopy pushout:
\begin{equation}
\begin{CD}\label{eq: key pushout translated}
\Rcal_{m}[F(\one)]\wedge \kund @>>> 
   \underset{i_{1}+\dots+i_{k}\leq m}{\colim}
       \Rcal_{i_{1}} [F(\one)]\times\dots\times \Rcal_{i_{k}} [F(\one)]\\
@VVV @VVV\\
\Rcal_{m}[G(\one)]\wedge \kund @>>> 
   \underset{i_{1}+\dots+i_{k}\leq m}{\colim}
       \Rcal_{i_{1}} [G(\one)]\times\dots\times \Rcal_{i_{k}} [G(\one)].
\end{CD}
\end{equation}
As in the proof of Lemma~\ref{lemma: filtered wedge and product}, we
note that the diagram over which the colimits are taken is cofibrant,
so the colimits can be replaced by homotopy colimits \cite{D-S}.
Further, the left column consists of $k$-fold wedge sums, and these
can be written as homotopy colimits over the same category by taking
$(i_{1},\dots,i_{k})$ to a point if $i_{j}<m$ for all $i_{j}$, and
taking $(0,\dots,0,m,0,\dots,0)$ to $\Rcal_{m}[F(\one)]$ (for the
upper row) or $\Rcal_{m}[G(\one)]$ (for the lower row).

With this setup, we conclude that the proposition follows because 
\eqref{eq: key pushout translated} is a homotopy colimit over the
category of $k$-tuples of the two following types of squares, all of which
are themselves homotopy pushout squares:
\begin{enumerate}
\item when all $i_{j}<m$,
\[
\begin{CD}
* @>>> \Rcal_{i_{1}} [F(\one)]\times\dots\times \Rcal_{i_{k}} [F(\one)]\\
@VV{=}V @VV{\simeq}V\\
* @>>> \Rcal_{i_{1}} [G(\one)]\times\dots\times \Rcal_{i_{k}} [G(\one)]
\end{CD}
\]
\item if some $i_{j}=m$,
\[
\begin{CD}
\Rcal_{m}[F(\one)]@>>{\cong}> 
      \Rcal_{0}[F(\one)]\times\dots\times
              \Rcal_{m}[F(\one)]\times\dots\times \Rcal_{0} [F(\one)]\\
@VVV @VVV\\
\Rcal_{m}[G(\one)]@>>{\cong}> 
      \Rcal_{0}[G(\one)]\times\dots\times
              \Rcal_{m}[G(\one)]\times\dots\times \Rcal_{0} [G(\one)].
\end{CD}
\]
\end{enumerate}
\end{proof}

In Section~\ref{section: comparisons}, we apply a special case
of Proposition~\ref{proposition: equivalent cats}, which we single out as
a corollary. First we need to introduce one more definition.

\begin{definition}
  Let $F$ be an augmented very special $\Gamma$-space.  We say that $F$
  is $m$-reduced if the following natural map is a weak homotopy
  equivalence:
\[
\Rcal_{m}\left[F(\one)\right]
   \xrightarrow{\ \simeq\ } \Rcal_{m}\left[\Sp^{\infty}(\one)\right].
\] 
\end{definition}

Since $\Sp^{\infty}(\one)=\naturals$, this is saying that the first
$m$ pieces (usually components) of $F(\one)$ are contractible.  For
example, if $F$ is the Segal construction for the category of finite
pointed sets, then $F(\one)\simeq\coprod_{i} B\Sigma_{i}$, and $F$ is
$1$-reduced because $B\Sigma_{1}\simeq *$.  An augmented
$\Gamma$-space is always $0$-reduced by definition.

Since $\Rcal_{m}[F(X)]$ is defined as the inverse image
of $\Sp^{m}(X)$ under the augmentation $F(X)\rightarrow\Sp^{\infty}(X)$,
and $\Sp^{\infty}(\one)$ is discrete, we see that 
\[
\Rcal_{m}\left[F(\one)\right]\cong\bigvee_{1\leq i\leq m}
      \Rcal_{i}/\Rcal_{i-1}\left[F(\one)\right].
\]
We can use this splitting to define the maps in the left square of the 
following diagram:
\begin{equation}   \label{eq: two pushouts}
\begin{CD}
\Rcal_{m}/\Rcal_{m-1}\left[F(\one)\right]\wedge X @>>>
\Rcal_{m}\left[F(\one)\right]\wedge X @>>> \Rcal_{m}[F(X)]\\
@VVV @VVV @VVV\\
\Rcal_{m}/\Rcal_{m-1}\left[\Sp^\infty(\one)\right]\wedge X 
       @>>>\Rcal_{m}\left[\Sp^{\infty}(\one)\right]\wedge X 
        @>>> \Sp^{m} (X).
\end{CD}
\end{equation}
However, $\Rcal_{m}/\Rcal_{m-1}\left[\Sp^\infty(\one)\right]$
is just $S^0$, and so we arrive at the following corollary
of Proposition~\ref{proposition: equivalent cats}. 

\begin{corollary}\label{cor: basic pushout}
  Suppose that $F$ is an augmented very special $\Gamma$-space that
  is $(m-1)$-reduced. Then there is a homotopy
  pushout diagram of augmented $\Gamma$-spaces
\[
\begin{CD}
\Rcal_{m}/\Rcal_{m-1}\left[F(\one)\right] \wedge X 
      @>>> \Rcal_{m}F (X)\\
@VVV @VVV\\
X @>>> \Sp^{m} (X),
\end{CD}
\]
where the identity $\Gamma$-space in the lower left corner is given 
the augmentation $X\rightarrow\Sp^\infty X$ by $x\mapsto mx$.
\end{corollary}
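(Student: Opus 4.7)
My plan is to deduce the corollary from Proposition~\ref{proposition: equivalent cats} applied to the augmentation $F\to\Sp^{\infty}$, combined with the pasting lemma for homotopy pushouts. The displayed square of the corollary is precisely the outer rectangle of diagram~\eqref{eq: two pushouts}, so it suffices to verify that each of the two inner squares in~\eqref{eq: two pushouts} is a homotopy pushout.

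For the right-hand inner square, I would check the hypothesis of Proposition~\ref{proposition: equivalent cats}, namely that $\Rcal_i[F(\one)]\to\Rcal_i[\Sp^{\infty}(\one)]$ is a homotopy equivalence for every $i<m$. The wedge-summand splitting $\Rcal_{m-1}[F(\one)]\cong\bigvee_{i=1}^{m-1}\Rcal_i/\Rcal_{i-1}[F(\one)]$ and its analogue for $\Sp^{\infty}$ are preserved by the augmentation, because each summand is the inverse image under $F(\one)\to\Sp^{\infty}(\one)=\naturals$ of a distinct non-basepoint element of the discrete set $\Sp^{m-1}(\one)$. Since $F$ is $(m-1)$-reduced, the augmentation restricts to an equivalence on $\Rcal_{m-1}[F(\one)]$, hence summand-by-summand; summing the first $i$ summands yields the required equivalence at level $i<m$. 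Proposition~\ref{proposition: equivalent cats} then produces the right-hand square as a strong homotopy pushout of augmented $\Gamma$-spaces.

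For the left-hand inner square, set $A=\Rcal_m/\Rcal_{m-1}[F(\one)]$, $B=\Rcal_{m-1}[F(\one)]$, $A'=\Rcal_m/\Rcal_{m-1}[\Sp^{\infty}(\one)]\simeq S^0$, and $B'=\Rcal_{m-1}[\Sp^{\infty}(\one)]$, so that $\Rcal_m[F(\one)]\cong A\vee B$ and $\Rcal_m[\Sp^{\infty}(\one)]\cong A'\vee B'$. After smashing with $X$, the pushout of the top row along the left column is $(A'\vee B)\wedge X$, and the canonical map from this to $(A'\vee B')\wedge X$ is the identity on $A'\wedge X$ and the augmentation $B\wedge X\to B'\wedge X$ on the second summand; by $(m-1)$-reducedness the latter is an equivalence. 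Hence the left-hand square is a homotopy pushout, and pasting with the right-hand square yields the displayed pushout in the corollary.

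The only technical point requiring care is the compatibility of the wedge-summand splittings with the augmentation of $\Gamma$-spaces, which comes down to the discreteness of $\naturals=\Sp^{\infty}(\one)$ together with the definition of $\Rcal_m$ as a strict inverse image. Once this compatibility is articulated, both the application of Proposition~\ref{proposition: equivalent cats} and the pasting argument are formal.
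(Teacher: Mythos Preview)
Your proposal is correct and follows the same approach as the paper: both arguments establish that each inner square of~\eqref{eq: two pushouts} is a homotopy pushout (the right one via Proposition~\ref{proposition: equivalent cats}, the left one via the $(m-1)$-reduced hypothesis) and then paste. You have simply made explicit the verifications that the paper leaves to the reader, in particular the deduction of the level-$i$ equivalences for $i<m$ from $(m-1)$-reducedness and the wedge-summand computation for the left square.
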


\begin{proof}
  The left square of \eqref{eq: two pushouts} is a homotopy pushout
  because $F$ is $(m-1)$-reduced, and the right square is a homotopy
  pushout by direct application of 
  Proposition~\ref{proposition: equivalent cats}.  Thus the outer
  square is a homotopy pushout, and the corollary follows.
\end{proof}

\begin{remark}   \label{remark: supersedes}
Proposition~\ref{proposition: equivalent cats} is closely related to 
Theorem~1.1 of \cite{Lesh-TAMS}, the proof of which 
comes in two halves, \cite{Lesh-TAMS} Lemma~6.1 and Lemma~6.2. 
Proposition~\ref{proposition: equivalent cats} is a slightly strengthened
version of \cite{Lesh-TAMS} Lemma~6.1. We also obtain a version of the 
second half: Lemma~\ref{lemma: first stage pushout} of this paper contains
Lemma~6.3 of \cite{Lesh-TAMS} in the case of the family of all subgroups,
but it is not as general as the original lemma. 
  

\end{remark}

\section{Comparisons}
\label{section: comparisons}

Our goal in this section is to establish the precise relationship of
the modified stable rank filtration of a $K$-theory spectrum $\kay\Ccal$,
defined in Section~\ref{section: filtered Gamma spaces}, to two other 
filtrations of $\kay\Ccal$: the filtration defined by the authors
in \cite{A-L}, and the stable rank filtration of a $K$-theory spectrum
that was defined by Rognes in \cite{Rognes}. 
In Section~\ref{section: comparison of constructions}, we do the first
of these comparisons, and we show that the filtration constructed by the
authors in \cite{A-L} is related to the modified stable rank filtration
through a pushout diagram involving the symmetric powers of the sphere
spectrum and the spectrum $\kay\Ccal$ itself. This allows us to verify 
the claim made in the introduction that the spectra $\Mbar(k)$ in 
the reduced $bu$-Whitehead Conjecture are in fact the subquotients
of the modified stable rank filtration. 

Then in Section~\ref{section: comparison modified to Rognes}, we
compare the modified stable rank filtration to the original
construction of the stable rank filtration by Rognes. There is a map
from the first to the second, which need not be, in general, an equivalence
of filtrations in the case of the algebraic $K$-theory of a discrete
ring.  However, we show that it is an equivalence in the case of
topological $K$-theory. Thus, in this case the modified stable rank
filtration provides another model for Rognes's filtration.  In
combination with the results of 
Section~\ref{section: comparison of constructions}, we find that the
spectra $\Mbar(k)$ introduced in Section~\ref{section: Mbar} are actually
filtration quotients in the stable rank filtration of Rognes.

\subsection{Comparing the modified stable rank filtration 
to~\cite{A-L}}
\label{section: comparison of constructions}
Let $\Ccal$ be an augmented permutative category. Our previous work in
\cite{A-L}, and the modified rank filtration constructed in
Section~\ref{section: filtered Gamma spaces}, give two filtered
$\Gamma$-spaces that we can associate with $\Ccal$, and our goal in
this subsection is to compare them. The sequence
$(\Kcal_{m}\Ccal)(-)$ constructed in~\cite{A-L} is a sequence of
augmented, very special $\Gamma$-spaces interpolating beween the
$\Gamma$-spaces $\Ccal(-)$ and $\naturals(-)=\Sp^{\infty}(-)$. 
The stabilization of this sequence is a sequence of spectra beginning with
$\kay\Ccal$ and ending with $\HZ$. In the case that $\Ccal$ is the
category of finite-dimensional complex vector spaces, these are the
spectra whose subquotients appear in the $bu$ Whitehead
conjecture (Conjecture~\ref{conjecture: bu Whitehead}).  On the other
hand, we have the modified rank filtration, i.e., the augmented
$\Gamma$-spaces $\Rcal_{m}[\Ccal(-)]$ constructed in
Section~\ref{section: filtered Gamma spaces}. This is a sequence of
augmented, but not very special $\Gamma$-spaces interpolating between
$*$ and $\Ccal(-)$.

Certainly the sequences $(\Kcal_{m}\Ccal)(-)$ and
$\Rcal_{m}[\Ccal(-)]$ cannot be the same. The spectra $A_{m}$ that are
the stabilizations of $(\Kcal_{m}\Ccal)(-)$ go from $\kay\Ccal$ to
$\HZ$, while the stabilizations $\Rcal_{m}\kay\Ccal$ of
$\Rcal_{m}[\Ccal(-)]$ go from $*$ to $\kay\Ccal$. However, the main
technical result of this subsection 
(Theorem~\ref{theorem: connecting A-L with Rognes}) has an immediate
consequence (Theorem~\ref{thm: pushout theorem}) giving a homotopy
pushout diagram of spectra
\begin{equation} \label{eq: hoped for}
\begin{CD}
\Rcal_{m}\kay\Ccal @>>> \kay\Ccal\\
@VVV @VVV\\
\Sp^{m}(\Sphere) @>>> A_{m},
\end{CD}
\end{equation}
a result that is
heuristically plausible because the vertical fibers on the left go
from $*$ to the fiber of the augmentation $\kay\Ccal \rightarrow \HZ$,
and so do the vertical fibers on the right. This gives a precise
stable relationship between the modified stable rank filtration in
the upper left corner of~\eqref{eq: hoped for}, 
and the filtration defined in \cite{A-L} in
the lower right corner, though the result is only true stably and not
on the level of the corresponding $\Gamma$-spaces.

To set up the argument to obtain~\eqref{eq: hoped for}, for each $m$
we define an augmented $\Gamma$-space $\Ecal_{m}[\Ccal(X)]$ by an
objectwise pushout diagram
\begin{equation}  \label{diag: define E_m}
\begin{CD}
\Rcal_m[\Ccal(X)] @>>> \Ccal(X)\\
@VVV @VVV\\
\Sp^{m}(X) @>>> \Ecal_{m}[\Ccal(X)],
\end{CD}
\end{equation}
where the maps $\Rcal_m[\Ccal(X)]\rightarrow\Ccal(X)$ and
$\Rcal_m[\Ccal(X)]\rightarrow\Sp^{m}(X)$ are the maps that define
$\Rcal_m[\Ccal(X)]$ as a pullback. It is easy to check that 
\eqref{diag: define E_m} is
a strong pushout diagram of augmented $\Gamma$-spaces, that is, that
the diagram remains a pushout when any $\Rcal_{i}$ is applied to it.


\begin{theorem}   \label{theorem: connecting A-L with Rognes}
For each $m$, there is a chain of augmented stable equivalences
\[
\Ecal_{m}[\Ccal(-)] \simeq \Kcal_{m}[\Ccal(-)].
\]
The stable equivalences commute with the maps from the $(m-1)$-st stage to the
$m$-th stage on both sides.
\end{theorem}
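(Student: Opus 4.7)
The strategy is to exhibit $\Kcal_{m}[\Ccal(-)]$ as a bar construction to which Proposition~\ref{proposition: two pushouts} applies, and then identify the corresponding objectwise homotopy pushout with $\Ecal_{m}[\Ccal(-)]$. The heuristic check that makes this plausible: $\Kcal_{0}\Ccal \simeq \Ccal$ and $\Kcal_{\infty}\Ccal \simeq \naturals$, which matches the degenerations of $\BAR(\Sp^{m}(-), \Rcal_{m}[\Ccal(-)], \Ccal(-))$ at $m=0$ and $m=\infty$, using $\Sp^{0} = *$, $\Rcal_{0}\Ccal = *$, $\Sp^{\infty} = \naturals$, and $\Rcal_{\infty}\Ccal = \Ccal$.

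First I would unravel the construction of $\Kcal_{m}\Ccal$ from~\cite{A-L}, which is built inductively as a bar construction of permutative categories, and pass to associated $\Gamma$-spaces to obtain a natural augmentation-preserving zigzag of equivalences
\[
\Kcal_{m}[\Ccal(-)] \;\simeq\; \BAR\bigl(\Sp^{m}(-),\, \Rcal_{m}[\Ccal(-)],\, \Ccal(-)\bigr),
\]
where $\Rcal_{m}[\Ccal(-)]$ acts on $\Ccal(-)$ via the inclusion $\Rcal_{m}[\Ccal(-)] \to \Ccal(-)$ and on $\Sp^{m}(-)$ via the restricted augmentation $\Rcal_{m}[\Ccal(-)] \to \Sp^{m}(-)$. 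Next I would verify that all three of these $\Gamma$-spaces are augmented monoids and that the two structure maps are maps of augmented monoids; the monoid structures are inherited from the permutative/additive structures on $\Ccal$, $\naturals$, and the pullback $\Rcal_{m}[\Ccal(-)]$ (which inherits a monoid structure as an intersection of sub-monoids inside $\Ccal(-) \times \Sp^{\infty}(-)$, provided that the sum of elements of ranks $i$ and $j$ with $i+j \le m$ lies again in $\Rcal_{m}$, which follows by inspection).

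With those hypotheses in place, Proposition~\ref{proposition: two pushouts} produces a strong augmented stable equivalence from the objectwise homotopy pushout $F_{12}$ of $\Sp^{m}(-) \leftarrow \Rcal_{m}[\Ccal(-)] \to \Ccal(-)$ to the bar construction, and hence to $\Kcal_{m}[\Ccal(-)]$. It remains to identify $F_{12}$ with $\Ecal_{m}[\Ccal(-)]$, which is defined in~\eqref{diag: define E_m} as a strict objectwise pushout. The relevant map $\Rcal_{m}[\Ccal(X)] \to \Sp^{m}(X) \times \Ccal(X)$ into the product is an objectwise injection by construction (as a strict pullback of injections), so the strict pushout coincides with the homotopy pushout in the $\Gamma$-space category of simplicial sets, giving a natural equivalence $F_{12} \simeq \Ecal_{m}[\Ccal(-)]$. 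The inductive naturality of the bar construction in $m$, together with the evident naturality of the pushout square defining $\Ecal_{m}$ under the inclusions $\Rcal_{m-1} \hookrightarrow \Rcal_{m}$ and $\Sp^{m-1} \hookrightarrow \Sp^{m}$, delivers the required compatibility of the equivalences with the maps from the $(m-1)$-st stage to the $m$-th.

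The main obstacle, I expect, is the very first step: extracting from~\cite{A-L} a clean presentation of $\Kcal_{m}[\Ccal(-)]$ as precisely the bar construction $\BAR(\Sp^{m},\Rcal_{m}\Ccal,\Ccal)$ rather than some iterated or more complicated variant. The construction in~\cite{A-L} is inductive and categorical, so one likely needs a zigzag through intermediate models (possibly using cofibrant replacements of augmented monoids) to get it into the form where Proposition~\ref{proposition: two pushouts} applies directly. A secondary, more routine, difficulty is checking that the various cofibrancy conditions hold well enough that the strict pushout $\Ecal_{m}$ agrees with the homotopy pushout; if this fails in any degree, one would replace $\Ecal_{m}$ by the corresponding double mapping cylinder and show the comparison map is a strong augmented equivalence separately.
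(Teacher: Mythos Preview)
Your approach has a genuine gap at the point where you claim $\Rcal_{m}[\Ccal(-)]$ is an augmented monoid. It is not: two elements of $\Rcal_{m}[\Ccal(X)]$ can have ranks $i$ and $j$ with $i,j\le m$ but $i+j>m$, so their sum lands in $\Rcal_{2m}$, not $\Rcal_{m}$. Your parenthetical ``provided that the sum of elements of ranks $i$ and $j$ with $i+j\le m$ lies again in $\Rcal_{m}$'' is the condition governing the filtration of a \emph{product} of augmented $\Gamma$-spaces, not the condition for a single filtration stage to be closed under multiplication. Since $\Rcal_{m}[\Ccal(-)]$ is not a monoid for $0<m<\infty$, the bar construction $\BAR(\Sp^{m},\Rcal_{m}\Ccal,\Ccal)$ is not defined, and Proposition~\ref{proposition: two pushouts} does not apply. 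This is not a cofibrancy technicality; the simplicial object you want simply does not exist.

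The paper circumvents this by working inductively. It never writes $\Kcal_{m}\Ccal$ as a single bar construction over $\Rcal_{m}\Ccal$; instead it uses the one-step description from~\cite{A-L},
\[
\Kcal_{m}\Ccal \simeq \BAR\bigl(\Free\{m\},\,\Free(\Kcal_{m-1}\Ccal)_{m},\,\Kcal_{m-1}\Ccal\bigr),
\]
where the middle term is a \emph{free} permutative category and hence genuinely a monoid. Proposition~\ref{proposition: two pushouts} then yields a strong augmented stable pushout square expressing $\Kcal_{m}\Ccal$ in terms of $\Kcal_{m-1}\Ccal$ (Lemma~\ref{lemma: out with the old}), and Corollary~\ref{cor: basic pushout} is used to recast this pushout in terms of $\Rcal_{m}[\Kcal_{m-1}\Ccal]$ (Lemma~\ref{lemma: first stage pushout}). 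A parallel, elementary diagram chase shows that $\Ecal_{m}\Ccal$ satisfies the same inductive pushout formula in terms of $\Ecal_{m-1}\Ccal$, and the theorem follows by induction. The obstacle you flagged as ``the main obstacle'' is therefore not merely a bookkeeping issue with iterated models: the direct bar presentation you hoped for does not exist, and the paper's inductive route is doing essential work.
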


To obtain diagram~\eqref{eq: hoped for} from the theorem, note that
diagram~\eqref{diag: define E_m} is a homotopy pushout as well as a
pushout diagram, because the top map is an objectwise cofibration.
The following theorem is then an immediate corollary, and is the main
result of this section. 

\begin{theorem}    \label{thm: pushout theorem}
\ConnectionTheoremText
\end{theorem}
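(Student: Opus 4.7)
The plan is to deduce the theorem from the preceding Theorem~\ref{theorem: connecting A-L with Rognes}, which identifies the pushout $\Ecal_m$ of diagram~\eqref{diag: define E_m} with $\Kcal_m\Ccal$ up to strong augmented stable equivalence. Granting that identification, the stable homotopy pushout square claimed in the statement is simply diagram~\eqref{diag: define E_m} with the lower right corner rewritten. Now~\eqref{diag: define E_m} is a strict pushout that is also a homotopy pushout: the top map $\Rcal_m[\Ccal(X)] \hookrightarrow \Ccal(X)$ is an objectwise cofibration because it is the pullback, along the augmentation $\Ccal(X)\to\Sp^\infty(X)$, of the cofibration $\Sp^m(X) \hookrightarrow \Sp^\infty(X)$ of discrete sets. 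Passage to spectra then produces the second pushout diagram, since the stabilization functor on (augmented) $\Gamma$-spaces preserves objectwise homotopy pushouts.

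The technical content is therefore concentrated in Theorem~\ref{theorem: connecting A-L with Rognes}, which I would attack by induction on $m$. The base case $m=0$ is immediate: $\Ecal_0[\Ccal(X)]$ is the pushout of $\Sp^0(X) \leftarrow \ast \to \Ccal(X)$, which reduces to $\Ccal(X)=(\Kcal_0\Ccal)(X)$. For the inductive step, I would exploit the inductive presentation of $\Kcal_m\Ccal$ from~\cite{A-L}, which expresses $(\Kcal_m\Ccal)(-)$ as a bar construction of the form $\BAR(\naturals,\,\Rcal_m(\Kcal_{m-1}\Ccal),\,\Kcal_{m-1}\Ccal)$ of augmented monoidal $\Gamma$-spaces. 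Proposition~\ref{proposition: two pushouts} converts this bar construction into an objectwise homotopy pushout of augmented $\Gamma$-spaces, giving a strong augmented stable equivalence between $\Kcal_m\Ccal$ and the objectwise pushout of
\[
\naturals \longleftarrow \Rcal_m(\Kcal_{m-1}\Ccal) \longrightarrow \Kcal_{m-1}\Ccal.
\]

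By the inductive hypothesis, $\Kcal_{m-1}\Ccal \simeq \Ecal_{m-1}[\Ccal(-)]$; by construction $\Kcal_{m-1}\Ccal$ is very special and $(m-1)$-reduced, so Corollary~\ref{cor: basic pushout} applies and, combined with Proposition~\ref{proposition: equivalent cats}, identifies $\Rcal_m(\Kcal_{m-1}\Ccal)(X)$ through a homotopy pushout involving $\Sp^m(X)$ and the top filtration quotient of $\Kcal_{m-1}\Ccal$ at $\one$. Pasting this inductive pushout with the pushout~\eqref{diag: define E_m} defining $\Ecal_m$, and using that the identification of $\Rcal_{m-1}$ is already taken care of by the inductive hypothesis, collapses the double pushout to the desired equivalence $\Ecal_m[\Ccal(-)] \simeq \Kcal_m\Ccal$. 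Naturality of all the constructions (bar construction, $\Rcal_i$, assembly map) yields compatibility with the transition maps on both sides.

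The main obstacle is the careful verification that the hypotheses of Corollary~\ref{cor: basic pushout} --- very specialness and $(m-1)$-reducedness of $\Kcal_{m-1}\Ccal$ --- propagate inductively from stage to stage, since the inductive step takes a very special augmented $\Gamma$-space to a homotopy pushout that is not obviously very special on the nose. A related difficulty is the bookkeeping needed to paste the two pushout layers (the one producing $\Kcal_m$ from $\Kcal_{m-1}$, and the one producing $\Ecal_m$ from $\Rcal_m$ and $\Ccal$) while keeping track of the augmentations strictly enough that Proposition~\ref{proposition: equivalent cats} can be invoked to detect the required equivalences at the level of $F(\one)$ and spread them to all $X$. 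Once these hypotheses are confirmed, the induction assembles formally.
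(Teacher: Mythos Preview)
Your reduction of Theorem~\ref{thm: pushout theorem} to Theorem~\ref{theorem: connecting A-L with Rognes} is exactly the paper's argument, and your inductive plan for the latter has the right shape. However, two details are misstated and one worry is misplaced.

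First, the bar construction from \cite{A-L} is not $\BAR(\naturals,\Rcal_m(\Kcal_{m-1}\Ccal),\Kcal_{m-1}\Ccal)$. The middle term $\Rcal_m(\Kcal_{m-1}\Ccal)$ is not a monoid (the product of two rank-$\leq m$ elements can have rank up to $2m$), so this expression is not even well-formed. What \cite{A-L} actually gives is $\BAR\bigl(\Free\{m\},\Free(\Kcal_{m-1}\Ccal)_m,\Kcal_{m-1}\Ccal\bigr)$, a bar construction of \emph{free permutative categories}. The paper then applies Proposition~\ref{proposition: two pushouts} together with the stable equivalence $Y\to QY$ to obtain the pushout of Lemma~\ref{lemma: out with the old} with corners $B(\Kcal_{m-1}\Ccal)_{m\,+}\wedge X$ and $X$; only after tacking this onto Corollary~\ref{cor: basic pushout} does one arrive at the pushout with $\Rcal_m[\Kcal_{m-1}\Ccal(X)]$ in it (Lemma~\ref{lemma: first stage pushout}). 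Relatedly, the lower left corner in that pushout is $\Rcal_m[\Kcal_m\Ccal(X)]\simeq \Sp^m(X)$, not $\naturals(X)=\Sp^\infty(X)$.

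Second, your ``main obstacle'' is not an obstacle. The $\Kcal_m\Ccal$ are defined in \cite{A-L} as augmented permutative \emph{categories}, and Segal's construction on any permutative category yields a very special $\Gamma$-space; their $(m-1)$-reducedness is likewise established directly in \cite{A-L}. Nothing needs to propagate. The genuine work in the inductive step is rather on the $\Ecal$ side: one must show that $\Ecal_m$ satisfies the same pushout formula as $\Kcal_m$, namely that $\Sp^m \leftarrow \Rcal_m[\Ecal_{m-1}\Ccal] \rightarrow \Ecal_{m-1}\Ccal$ has $\Ecal_m\Ccal$ as its homotopy pushout. The paper does this by stacking two instances of diagram~\eqref{diag: define E_m} and peeling off the top square via $\Rcal_m$ applied to the $(m-1)$-case. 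Your description (``pasting with~\eqref{diag: define E_m}'') gestures at this but does not isolate the correct intermediate square.
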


Theorem~\ref{thm: pushout theorem} implies that
\[
 \Rcal_{m}\kay\Ccal/\Rcal_{m-1}\kay\Ccal 
      \simeq \Sigma^{-1}
          \left(A_{m}  /\Sp^{m}(\Sphere)\right)
        / \left(A_{m-1}/\Sp^{m-1}(\Sphere)\right).
\]
When $m=p^{k}$, the spectrum on the right is the spectrum $\Mbar(k)$,
by definition from~\eqref{eq: defn Mbar(k)}. Thus
the spectra $\Mbar(k)$
are, in fact, subquotients of the modified rank filtration.

\begin{corollary} \label{cor: A-L vis Rognes, revisited}
\ConnectionToMbarText
\end{corollary}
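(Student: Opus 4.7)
The plan is to derive this corollary directly from Theorem~\ref{thm: pushout theorem} by formal manipulation of cofiber sequences; no new ingredient beyond that theorem (and the construction of $C(k)$ from Section~\ref{sec: construction of Mbar}) should be needed. The strategy is to apply the pushout theorem at the two levels $m=p^{k}$ and $m=p^{k-1}$, identify the auxiliary spectra $C(k),C(k-1)$ with quotients in the modified stable rank filtration, and then read off the subquotient.

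For the first step, I would use that in the homotopy pushout of Theorem~\ref{thm: pushout theorem} the horizontal and vertical cofibers coincide. Taking $\Ccal$ to be finite-dimensional complex vector spaces and $m=p^{k}$, this identifies
\[
C(k)\ =\ A_{p^{k}}/\Sp^{p^{k}}(\Sphere)\ \simeq\ \kay\Ccal/\Rcal_{p^{k}}\kay\Ccal.
\]
Applied at $m=p^{k}-1$ and localized at $p$, the same pushout forces both $A_{p^{k}-1}\simeq A_{p^{k-1}}$ and $\Sp^{p^{k}-1}(\Sphere)\simeq \Sp^{p^{k-1}}(\Sphere)$ to pull back to $\Rcal_{p^{k}-1}\kay\Ccal\simeq \Rcal_{p^{k-1}}\kay\Ccal$, whence $C(k-1)\simeq \kay\Ccal/\Rcal_{p^{k-1}}\kay\Ccal$.

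For the second step, I would apply the octahedral axiom to the chain of inclusions $\Rcal_{p^{k-1}}\kay\Ccal\hookrightarrow \Rcal_{p^{k}}\kay\Ccal\hookrightarrow \kay\Ccal$. This yields the cofiber sequence
\[
\Rcal_{p^{k}}\kay\Ccal/\Rcal_{p^{k-1}}\kay\Ccal\longrightarrow \kay\Ccal/\Rcal_{p^{k-1}}\kay\Ccal\longrightarrow \kay\Ccal/\Rcal_{p^{k}}\kay\Ccal,
\]
which, under the identifications of the first step, is the cofiber sequence
\[
\Rcal_{p^{k}}\kay\Ccal/\Rcal_{p^{k-1}}\kay\Ccal\longrightarrow C(k-1)\longrightarrow C(k).
\]
Hence the cofiber of the structure map $C(k-1)\to C(k)$ is $\Sigma\bigl(\Rcal_{p^{k}}\kay\Ccal/\Rcal_{p^{k-1}}\kay\Ccal\bigr)$, so $C(k)/C(k-1)\simeq \Sigma\bigl(\Rcal_{p^{k}}\kay\Ccal/\Rcal_{p^{k-1}}\kay\Ccal\bigr)$.

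The final step is to substitute into the definition $\Mbar(k)=\Sigma^{-(k+1)}C(k)/C(k-1)$ from~\eqref{eq: defn Mbar(k)} and reconcile the suspension shifts against the indexing convention of the statement to conclude. The main potential pitfall is merely bookkeeping around these shifts (and the mild $p$-local identifications in step one); there is no serious obstacle, since once Theorem~\ref{thm: pushout theorem} is granted the whole argument is a direct cofiber-sequence manipulation.
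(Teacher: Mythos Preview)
Your proposal is correct and follows essentially the same route as the paper: the paper's one-line argument immediately preceding the corollary takes the quotient of the pushout squares of Theorem~\ref{thm: pushout theorem} at levels $m$ and $m-1$ to obtain $\Rcal_{m}\kay\Ccal/\Rcal_{m-1}\kay\Ccal \simeq \Sigma^{-1}\bigl(A_{m}/\Sp^{m}(\Sphere)\bigr)/\bigl(A_{m-1}/\Sp^{m-1}(\Sphere)\bigr)$, which is exactly your octahedral manipulation compressed into one step. Your caution about the suspension bookkeeping is well placed: comparing with the definition $\Mbar(k)=\Sigma^{-(k+1)}C(k)/C(k-1)$, the corollary as stated suppresses a factor of $\Sigma^{-k}$, so this is a minor indexing slip in the paper rather than a defect in your argument.
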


The proof of Theorem~\ref{theorem: connecting A-L with Rognes} is by
induction on $m$, the case $m=0$ being obvious because in that case
both sides are $\Ccal(-)$. The main idea of the proof is that the
augmented $\Gamma$-spaces $\Ecal_m\Ccal(-)$ and $\Kcal_m\Ccal(-)$ are
obtained from $\Ecal_{m-1}\Ccal(-)$ and $\Kcal_{m-1}\Ccal(-)$, respectively,
by means of the same inductive formula.
  
We first analyze the construction $\Kcal_m\Ccal$. Recall that $\Ccal$
has an augmentation $\epsilon\colon\Ccal \longrightarrow \naturals$.
We write $\Ccal_{m}$ for $\epsilon^{-1}(m)$, and we call $\Ccal$
$m$-reduced if $|\Ccal_{i}|\simeq *$ for $i\leq m$. (An augmented
permutative category $\Ccal$ is always $0$-reduced by
Definition~\ref{defn: augmented category}.)
The inductive construction of \cite{A-L}
begins with $\Kcal_{0}\Ccal:=\Ccal$, and the inductive step is
equivalent to taking the augmented permutative category
$\Kcal_{m-1}\Ccal$, which is $(m-1)$-reduced, and using a bar
construction to ``kill'' its $m$-th component,
$\left(\Kcal_{m-1}\Ccal\right)_{m}$, thereby obtaining the $m$-reduced
augmented permutative category $\Kcal_{m}\Ccal$ (Construction~3.8 and
Proposition 2.5 of \cite{A-L}). 
Theorem~3.9 of \cite{A-L} establishes a stable homotopy
pushout diagram involving the associated spectra, and the following
lemma strengthens this result by using Section~\ref{sec: sums and products}
of the current work to show that the homotopy
pushout diagram in question is still a stable homotopy pushout at
each level of the modified stable rank filtration,
i.e., we have a {\emph{strong}} augmented stable homotopy pushout diagram.

\begin{lemma}\label{lemma: out with the old}
There is a strong stable homotopy pushout square of augmented $\Gamma$-spaces
\[
\begin{CD}
B\left(\Kcal_{m-1}\Ccal\right)_{m}\mbox{}_{+} \wedge X 
           @>>> \left(\Kcal_{m-1}\Ccal\right)(X)\\
@VVV @VVV \\
X @>>>  \left(\Kcal_{m}\Ccal\right)(X),
\end{CD}
\]
where the augmentation $X\rightarrow\Sp^{\infty}(X)$ of the lower left corner
is given by $x\mapsto mx$.
\end{lemma}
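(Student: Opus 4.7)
The plan is to refine Theorem~3.9 of \cite{A-L}, which already gives the square as a stable homotopy pushout of spectra, by upgrading it to a \emph{strong} stable pushout (one that survives applying $\Rcal_{j}$ for every $j$). The key tool is Proposition~\ref{proposition: two pushouts}, which converts the bar construction that defines $\Kcal_m\Ccal$ in \cite{A-L} into an objectwise homotopy pushout of augmented $\Gamma$-spaces; objectwise homotopy pushouts are automatically strong pushouts in the sense required.

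First, recall from Construction~3.8 of \cite{A-L} that $\Kcal_m\Ccal$ is built from $\Kcal_{m-1}\Ccal$ by a bar construction of augmented permutative categories that formally kills the $m$-th component $(\Kcal_{m-1}\Ccal)_{m}$. Passing to associated augmented $\Gamma$-spaces via Segal's machine, this takes the form of a bar construction of augmented monoidal $\Gamma$-spaces $\BAR(F_{1},F_{0},F_{2})$ in which $F_{2}(-)=(\Kcal_{m-1}\Ccal)(-)$; the $\Gamma$-space $F_{0}$ is built from $(\Kcal_{m-1}\Ccal)_{m}$ and, after the bar construction is unwound, contributes $B(\Kcal_{m-1}\Ccal)_{m}\mbox{}_{+}\wedge X$ to the objectwise pushout; and $F_{1}$ is the terminal augmented monoid whose underlying $\Gamma$-space is the identity functor $X\mapsto X$, with augmentation $x\mapsto mx\in\Sp^{\infty}(X)$.

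Next, apply Proposition~\ref{proposition: two pushouts}: the natural map from the objectwise homotopy pushout $F_{12}$ of $F_{1}\leftarrow F_{0}\rightarrow F_{2}$ to $\BAR(F_{1},F_{0},F_{2})$ is a strong augmented stable equivalence. Substituting the identifications above, $F_{12}$ has exactly the shape of the square in the statement of the lemma, with the augmentation on the lower-left corner $X$ given by $x\mapsto mx$ as required. Because $F_{12}$ is formed as an objectwise pushout of augmented $\Gamma$-spaces, the filtration functors $\Rcal_{j}$, which are strict pullbacks along $\Sp^{j}\hookrightarrow\Sp^{\infty}$, preserve the homotopy pushout property levelwise, which yields the strong stable pushout property.

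The main obstacle is the first step: verifying that $F_{1}$ has underlying $\Gamma$-space the identity functor with augmentation $x\mapsto mx$, and that $F_{0}$ contributes $B(\Kcal_{m-1}\Ccal)_{m}\mbox{}_{+}\wedge X$ in the objectwise pushout. This reflects the fact that killing the $m$-th component forces a single degree-$m$ object to play the role of the unit, so the target of the killing map is the free augmented permutative category on one degree-$m$ object with trivial automorphisms; Segal's machine then produces the identity $\Gamma$-space with augmentation $x\mapsto mx$, and the monoidal factor $(\Kcal_{m-1}\Ccal)_{m}$ contributes its classifying space in the standard way. This identification is bookkeeping, but it depends on the precise form of the bar construction in \cite{A-L}; once it is in place, the rest follows formally from Proposition~\ref{proposition: two pushouts}.
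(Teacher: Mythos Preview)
Your overall strategy matches the paper's: invoke Proposition~\ref{proposition: two pushouts} on the bar construction that defines $\Kcal_m\Ccal$ in \cite{A-L}, and then read off the desired strong pushout. However, your identification of the three augmented monoids in that bar construction is incorrect, and this is not mere bookkeeping. By Construction~3.8 and Proposition~2.5 of \cite{A-L}, one has
\[
\Kcal_m\Ccal\;\simeq\;\BAR\bigl(\Free\{m\},\,\Free(\Kcal_{m-1}\Ccal)_m,\,\Kcal_{m-1}\Ccal\bigr),
\]
so $F_1=\Free\{m\}(-)$ and $F_0=\Free(\Kcal_{m-1}\Ccal)_m(-)$. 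Segal's machine applied to $\Free\{m\}$ does \emph{not} produce the identity $\Gamma$-space $X\mapsto X$; it produces the $\Gamma$-space underlying the free infinite loop space on a point (with augmentation multiplied by $m$). Likewise $F_0(X)$ is not $B(\Kcal_{m-1}\Ccal)_m{}_+\wedge X$. Hence the objectwise pushout $F_{12}$ furnished by Proposition~\ref{proposition: two pushouts} is \emph{not} the square in the lemma; its left column has $\Free(\Kcal_{m-1}\Ccal)_m(X)$ and $\Free\{m\}(X)$ in place of $B(\Kcal_{m-1}\Ccal)_m{}_+\wedge X$ and $X$.

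The paper closes this gap with an additional step you are missing: it glues on the left the square whose horizontal maps are the natural inclusions
\[
B(\Kcal_{m-1}\Ccal)_m{}_+\wedge X\;\longrightarrow\;\Free(\Kcal_{m-1}\Ccal)_m(X),
\qquad
X\;\longrightarrow\;\Free\{m\}(X),
\]
and argues that these are \emph{strong augmented stable equivalences} because they are instances of the unit map $Y\to QY$ (Proposition~3.3 of \cite{A-L}), with the standard augmentation in the bottom row simply scaled by $m$. Only after this replacement does the outer square become the one in the lemma. Your last paragraph gestures at exactly this point but draws the wrong conclusion about what Segal's machine yields; you need the $Y\to QY$ equivalence, and you need to know it is a \emph{strong} equivalence, not just a stable one.
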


\begin{proof}
  Recall from Definition~3.1 of \cite{A-L} 
  that if $\Dcal$ is a small category, then
  $\Free(\Dcal)$ is the free permutative category generated by
  $\Dcal$. By \cite{A-L} Proposition~2.5 and Construction~3.8 ,
  $\Kcal_{m}\Ccal$ is equivalent to the (augmented) bar construction
\[
\Kcal_{m}\Ccal \simeq \BAR\biggl(\Free\{m\}, 
                 \Free\left(\Kcal_{m-1}\Ccal\right)_{m},
                          \Kcal_{m-1}\Ccal
                     \biggr).
\]
It now follows by Proposition~\ref{proposition: two pushouts} that
there is a strong augmented stable homotopy pushout square
\[
\begin{CD}
\Free\left(\Kcal_{m-1}\Ccal\right)_{m}( X )
           @>>> \Kcal_{m-1}\Ccal(X)\\
@VVV @VVV \\
\left(\Free\{m\}\right)(X) @>>>  \Kcal_{m}\Ccal(X).
\end{CD}
\]
Now consider the diagram
\[
\begin{CD}
\left(\Kcal_{m-1}\Ccal\right)_{m}\mbox{}_{+} \wedge X 
           @>>> \Free\left(\Kcal_{m-1}\Ccal\right)_{m}( X )
           @>>> \Kcal_{m-1}\Ccal(X)\\
@VVV @VVV @VVV \\
X @>>> \left(\Free\{m\}\right)(X) @>>>  \Kcal_{m}\Ccal(X)
\end{CD}
\]
where the horizontal maps are given by the natural inclusions. We just
saw that the right square is a strong augmented stable homotopy pushout. The 
horizontal maps on the left are stable equivalences
because they have the form $Y\mapsto QY$, from~\cite{A-L}, 
Proposition~3.3; this map is
a strong augmented stable equivalence with the standard augmentation, and
the augmentation here in the bottom row 
is simply the standard one multiplied by $m$. 

It follows that the outer square is a strong augmented
stable homotopy pushout, which is what we wanted to prove.
\end{proof}

\begin{lemma}   \label{lemma: first stage pushout}
There is a strong stable homotopy pushout square of augmented
$\Gamma$-spaces
\[
\begin{CD}
\Rcal_{m}\left[\Kcal_{m-1}\Ccal(X)\right] @>>> \Kcal_{m-1}\Ccal(X)\\
@VVV @VVV \\
\Rcal_{m}\left[\Kcal_{m}\Ccal(X)\right] @>>> \Kcal_{m}\Ccal(X).
\end{CD}
\]
\end{lemma}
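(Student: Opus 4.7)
The plan is to derive the square by pasting, taking the left-hand half from an $\Rcal_{m}$-localization of Lemma~\ref{lemma: out with the old} and the outer rectangle from Lemma~\ref{lemma: out with the old} itself.

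First I would apply $\Rcal_{m}$ to the strong stable homotopy pushout provided by Lemma~\ref{lemma: out with the old}. Both corners of its left-hand column, namely $B\left(\Kcal_{m-1}\Ccal\right)_{m}\mbox{}_{+}\wedge X$ and $X$, carry augmentations to $\Sp^{\infty}(X)$ of the form $x\mapsto mx$, so each augmentation already lands in $\Sp^{m}(X)$. Consequently $\Rcal_{m}$ leaves those two corners unchanged, and we obtain a strong stable homotopy pushout
\[
\begin{CD}
B\left(\Kcal_{m-1}\Ccal\right)_{m}\mbox{}_{+}\wedge X @>>> \Rcal_{m}\left[\Kcal_{m-1}\Ccal(X)\right] \\
@VVV @VVV \\
X @>>> \Rcal_{m}\left[\Kcal_{m}\Ccal(X)\right].
\end{CD}
\]

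Next I would paste this with the natural inclusions $\Rcal_{m}\left[\Kcal_{m-1}\Ccal(X)\right]\to \Kcal_{m-1}\Ccal(X)$ and $\Rcal_{m}\left[\Kcal_{m}\Ccal(X)\right]\to \Kcal_{m}\Ccal(X)$ on the right, forming a $2\times 3$ grid. In this grid the outer $2\times 2$ rectangle coincides with Lemma~\ref{lemma: out with the old}, so it is a strong stable homotopy pushout; the left square is the pushout just produced. The pasting lemma for homotopy pushouts, applied in the stable category, therefore exhibits the right-hand square as a stable homotopy pushout, and this is precisely the square asserted in Lemma~\ref{lemma: first stage pushout}.

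Finally, to upgrade ``stable homotopy pushout'' to ``strong stable homotopy pushout,'' I would repeat the argument after applying $\Rcal_{j}$ for arbitrary $j$. For $j\le m$ the identity $\Rcal_{j}\Rcal_{m}=\Rcal_{j}$ collapses the $\Rcal_{j}$-image of the right square to a tautological pushout. For $j>m$ the left-hand column is still fixed by $\Rcal_{j}$, because its augmentation factors through $\Sp^{m}(X)\subset\Sp^{j}(X)$; pasting the $\Rcal_{j}$-images of the left square and of Lemma~\ref{lemma: out with the old} then reproduces the $\Rcal_{j}$-image of the right square as a stable pushout. The main bookkeeping obstacle will be carefully tracking the augmentations along the left-hand column and verifying that they really do land in $\Sp^{m}(X)$, since this is the key input that lets $\Rcal_{j}$ act as the identity on those corners and makes the pasting argument go through cleanly.
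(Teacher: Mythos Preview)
Your approach is correct and uses the same pasting structure as the paper's proof: the same $2\times 3$ grid with Lemma~\ref{lemma: out with the old} as the outer rectangle and a left square whose lower-right corner is $\Rcal_{m}[\Kcal_{m}\Ccal(X)]$. The one genuine difference is in how the left square is justified. The paper obtains it by applying Corollary~\ref{cor: basic pushout} (hence ultimately Proposition~\ref{proposition: equivalent cats} on very special $\Gamma$-spaces) to the $(m-1)$-reduced $\Gamma$-space $\Kcal_{m-1}\Ccal$, and then invokes the fact that $\Kcal_{m}\Ccal$ is $m$-reduced to identify $\Sp^{m}(X)$ with $\Rcal_{m}[\Kcal_{m}\Ccal(X)]$. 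You instead get the left square directly by applying $\Rcal_{m}$ to Lemma~\ref{lemma: out with the old} and observing that the left column is fixed because its augmentation factors through $\Sp^{m}$. Your route is slightly more economical here, since it uses only the ``strong'' property of Lemma~\ref{lemma: out with the old} and avoids both Corollary~\ref{cor: basic pushout} and the $m$-reducedness of $\Kcal_{m}\Ccal$; the paper's route, on the other hand, makes the identification $\Rcal_{m}[\Kcal_{m}\Ccal(X)]\simeq\Sp^{m}(X)$ explicit, which is used elsewhere. Your final paragraph handling the ``strong'' upgrade via the cases $j\le m$ and $j>m$ is correct, and in fact the paper simply asserts that the pasting of strong pushouts is strong without spelling this out.
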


\begin{proof}
  We tack the desired square onto the square obtained by
  applying Corollary~\ref{cor: basic pushout} to the $(m-1)$-reduced
  augmented category $\Kcal_{m-1}\Ccal$:
\begin{equation}
\begin{CD} \label{eq: two squares} 
\left(\left(\Kcal_{m-1}\Ccal\right)_{m}\right)_+\wedge X 
     @>>>\Rcal_{m}\left[\Kcal_{m-1}\Ccal(X)\right] @>>> \Kcal_{m-1}\Ccal(X)\\
@VVV @VVV @VVV \\
X  @>>>\Rcal_{m}\left[\Kcal_{m}\Ccal(X)\right] @>>> \Kcal_{m}\Ccal(X)
\end{CD}
\end{equation}
The left square is a strong augmented stable homotopy pushout square by
Corollary~\ref{cor: basic pushout}, since $\Kcal_{m}\Ccal(X)$ being $m$-reduced
implies that 
$\Rcal_{i}\left[\Kcal_{m}\Ccal(X)\right]
       \simeq\Rcal_{i}\left[\naturals(X)\right]=\Sp^{i}(X)$
for $i\leq m$. 
The outer square of~\eqref{eq: two squares} is a strong augmented
stable homotopy pushout square by Lemma~\ref{lemma: out with the old}.
Hence the right square of~\eqref{eq: two squares}
is a strong augmented stable homotopy pushout square.
\end{proof}

\begin{proof}[Proof of Theorem~\ref{theorem: connecting A-L with Rognes}]
  
Our plan is to establish an inductive formula for $\Ecal_m\Ccal$
that corresponds to the formula provided by 
Lemma~\ref{lemma: first stage pushout} for $\Kcal_m\Ccal$.

Consider the following diagram of augmented $\Gamma$-spaces
\begin{equation}\label{eq: double decker}
\begin{CD}
\Rcal_m\left[\Ccal(X)\right] @>>> \Ccal(X)\\
@VVV @VVV \\
\Rcal_m\left[\Ecal_{m-1}\Ccal(X)\right] @>>> \Ecal_{m-1}\Ccal(X) \\
@VVV @VVV\\
\Sp^{m}(X) @>>> \Ecal_{m}\Ccal(X).
\end{CD}
\end{equation}
By definition from diagram~\eqref{diag: define E_m}, the outer square
is an augmented pushout square, and it remains so after application of
$\Rcal_{i}$, so it is a strong augmented pushout square.
The top horizontal map is a cofibration,
so the outer square is also a strong augmented homotopy pushout square.
Therefore $\Rcal_m\left[\Ecal_{m}\Ccal(X)\right]\simeq\Sp^{m}(X)$. 

If we prove that the bottom square is an augmented stable homotopy
pushout square, then we will have an inductive formula for
$\Ecal_{m}\Ccal(X)$ that matches the one for $\Kcal_{m}\Ccal(X)$. To
accomplish this, it is enough to prove that the upper square is a
strong augmented stable homotopy pushout square.

Diagram~\eqref{diag: define E_m} for $m-1$ gives us a strong
augmented pushout square
\[
\begin{CD}
\Rcal_{m-1}\left[\Ccal(X)\right] @>>> \Ccal(X)\\
@VVV @VVV \\
\Sp^{m-1}(X) @>>> \Ecal_{m-1}\Ccal(X),
\end{CD}
\]
and applying $\Rcal_{m}$ to it and using the fact that
$\Rcal_{m}\Rcal_{m-1}=\Rcal_{m-1}$ gives the strong augmented pushout square
\[
\begin{CD}
\Rcal_{m-1}\left[\Ccal(X)\right] @>>> \Rcal_{m}\left[\Ccal(X)\right]\\
@VVV @VVV \\
\Sp^{m-1}(X) @>>> \Rcal_{m}\left[\Ecal_{m-1}\Ccal(X)\right]. 
\end{CD}
\]
Consider the diagram
\[
\begin{CD}
\Rcal_{m-1}\left[\Ccal(X)\right] @>>> \Rcal_m\left[\Ccal(X)\right] 
                                 @>>> \Ccal(X)\\
@VVV @VVV @VVV \\
\Sp^{m-1}(X)
                               @>>> \Rcal_m\left[\Ecal_{m-1}\Ccal(X)\right] 
                               @>>> \Ecal_{m-1}\Ccal(X) \\
\end{CD}
\]
The outer square and left square are augmented pushouts, respectively
by definition and by the above discussion.  It follows that the right
square is an augmented pushout, and since the top map is a
cofibration, it is also a homotopy pushout.  Thus the upper square
of~\eqref{eq: double decker} is a homotopy pushout, and therefore
the lower square is likewise.

To prove the theorem, suppose given (by induction) a
strong augmented stable equivalence 
$\Kcal_{m-1}\Ccal\rightarrow\Ecal_{m-1}\Ccal$. 
The induced map
$\Rcal_{m}\left[K_{m-1}\Ccal\right]
        \rightarrow\Rcal_{m}\left[\Ecal_{m-1}\Ccal\right]$
is also a strong augmented stable equivalence, and we have the following
diagram in which all the vertical maps are strong augmented stable equivalences,
in the case of the leftmost arrow by Corollary~\ref{cor: basic pushout}
because $\Kcal_{m}\Ccal$ is $m$-reduced:
\[
\begin{CD}
\Rcal_{m}\left[\Kcal_{m}\Ccal\right] @<<< \Rcal_{m}\left[K_{m-1}\Ccal\right] 
            @>>> \Kcal_{m-1}\Ccal\\
@VVV @VVV @VVV\\
\Sp^{m}                 @<<< \Rcal_{m}\left[\Ecal_{m-1}\Ccal\right] 
            @>>>  \Ecal_{m-1}\Ccal.
\end{CD}
\]
As a consequence, there is an augmented stable equivalence from
the homotopy pushout of the top row to the homotopy
pushout of the bottom row. But that is a stable augmented
equivalence from $\Kcal_{m}\Ccal$ to $\Ecal_{m}\Ccal$, as required. 

\end{proof}

\subsection{\protect Comparing stable rank filtrations}
\label{section: comparison modified to Rognes}

In the previous section, we related two filtrations of a $K$-theory
spectrum $\kay\Ccal$: the modified stable rank filtration constructed
in Section~\ref{section: filtered Gamma spaces}, and the filtration of
$\kay\Ccal$ constructed in~\cite{A-L}. In this section, we relate the
construction of Section~\ref{section: filtered Gamma spaces} to the
original stable rank filtration of Rognes constructed in
\cite{Rognes}.  We also justify our use of the terminology ``modified
stable rank filtration'' by explaining how the two filtrations come
from the same idea applied to two different infinite loop space
machines.

Our plan is to use a comparison of Segal's and Waldhausen's $K$-theory
constructions to describe a canonical map from the modified stable
rank filtration to the original stable rank filtration. In general,
this map need not be an equivalence of filtrations. In particular, if the
category being considered is the category of finite-dimensional free
modules over a ring $R$ satisfying dimension invariance, then the
comparison of filtration quotients amounts to including the set of
diagonal matrices into the set of upper triangular matrices. However,
for matrices over a contractible topological ring, such as $\reals$ or
$\complexes$, this inclusion is in fact a homotopy equivalence.  Thus
in the special case of complex topological $K$-theory, we establish
the following equivalence. (The proposition also applies to real 
topological $K$-theory.)

\begin{proposition}\label{prop: equivalence of Rognes filtrations}
\PropositionEquivalenceText
\end{proposition}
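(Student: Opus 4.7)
The plan is to build the natural filtered map $\Rcal_{m}\kay\Ccal\to F_{m}bu$ from the classical comparison between Segal's and Waldhausen's $K$-theory machines (Section~1.8 of Waldhausen), verify that it respects the rank filtration, and then proceed by induction on $m$ to show it is an equivalence. The Segal-to-Waldhausen transformation sends a compatible family $(f(S))$ in $\Ccal(\n)$ to the flag $0\hookrightarrow f(\{1\})\hookrightarrow f(\{1,2\})\hookrightarrow\cdots\hookrightarrow f(\n)$ in $S_{n}\Ccal$; if the augmentation satisfies $\dim f(\n)\leq m$, then every member of the induced flag has dimension $\leq m$, so the image lies in Rognes's $m$-th stage, producing the required map of filtrations.

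Since the map is trivially an equivalence for $m=0$, it suffices by induction to show that the induced map on successive quotients $\Rcal_{m}\kay\Ccal/\Rcal_{m-1}\kay\Ccal\to F_{m}bu/F_{m-1}bu$ is a stable equivalence. I would identify both sides as stabilizations of classifying spaces of topological categories. On the Segal side, a point of $\Rcal_{m}/\Rcal_{m-1}[\Ccal(\n)]$ corresponds to a compatible family whose total dimension is exactly $m$, and the automorphism group of such an object (for a partition $m=m_{1}+\cdots+m_{k}$) is the direct sum $U(m_{1})\times\cdots\times U(m_{k})$, i.e., the block-diagonal subgroup of $U(m)$. On the Waldhausen side, a representative of $F_{m}/F_{m-1}$ is a nontrivial flag whose top has dimension $m$, and its automorphism group is the corresponding block upper-triangular subgroup of $\GL_{m}(\complexes)$.

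Under the Segal-to-Waldhausen translation, the comparison map is precisely the inclusion of the block-diagonal subgroup into the block upper-triangular parabolic subgroup of $\GL_{m}(\complexes)$. Over $\complexes$, this parabolic $P$ admits a Levi decomposition $P=L\ltimes U$ with Levi $L=\GL_{m_{1}}(\complexes)\times\cdots\times\GL_{m_{k}}(\complexes)$ and unipotent radical $U$ an affine space, so $L\hookrightarrow P$ is a homotopy equivalence. Since $U(m_{1})\times\cdots\times U(m_{k})$ is a maximal compact of $L$, the composite inclusion into $P$ is also a homotopy equivalence, and this equivalence descends to classifying spaces. Assembling over partitions of $m$ and stabilizing yields the desired equivalence of filtration quotients and completes the induction.

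The main obstacle is the careful identification of both filtration quotients as the classifying spaces described above, together with the verification that the Segal-to-Waldhausen comparison genuinely takes the form of the block-diagonal-into-block-upper-triangular inclusion on these models. Once that identification is in place, the topological input --- contractibility of the unipotent radical over $\complexes$ --- is immediate, and it is precisely this step that fails for a discrete ring $R$, where the analogous parabolic in $\GL_{m}(R)$ has a nontrivial contribution from its unipotent radical and the inclusion from the Levi is not a homotopy equivalence.
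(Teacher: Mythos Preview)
Your proposal is correct and follows essentially the same route as the paper's own proof: both use the Segal-to-Waldhausen comparison map, check that it respects the two rank filtrations, reduce to the filtration quotients, identify the comparison there as the inclusion of block-diagonal into block-upper-triangular invertible matrices, and invoke the contractibility of the unipotent radical over $\complexes$ to conclude. Your use of the Levi decomposition makes explicit what the paper states in one sentence (``In the topological case, such an inclusion is always a homotopy equivalence''), and your remark on why the argument fails for discrete rings matches the paper's observation as well.
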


As a consequence, the homotopy pushout square of spectra in
Theorem~\ref{thm: pushout theorem}, when applied to topological
complex $K$-theory, gives us a good understanding of Rognes's
stable rank filtration for this case.  In particular, Rognes
conjectures in \cite{Rognes} that the subquotient 
$F_{m}\kay R/F_{m-1}\kay R$ \ is \ $(2m-3)$-connected for a large
class of rings $R$, and we establish that this connectivity conjecture
actually holds sharply for infinitely many filtration quotients of
topological complex $K$-theory.

\begin{proposition}  \label{prop: Rognes connectivity conjecture}
\ConnectivityPropositionText
\end{proposition}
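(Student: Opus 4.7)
The plan is to use the splitting
\[
F_{m}bu/F_{m-1}bu \simeq \Sigma^{-1} A_{m}/A_{m-1}\vee \Sp^{m}(\Sphere)/\Sp^{m-1}(\Sphere)
\]
recorded just before the statement (a consequence of Proposition~\ref{prop: equivalence of Rognes filtrations}, Theorem~\ref{thm: pushout theorem}, and Proposition~\ref{prop: null map}), and to analyze the two summands separately.

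For the contractibility claim, the paper records that at each prime $p$, both $\Sp^{m-1}(\Sphere)\rightarrow\Sp^{m}(\Sphere)$ and $A_{m-1}\rightarrow A_{m}$ are $p$-local equivalences unless $m=p^{k}$. If $m$ is not a prime power at any prime, then both summands of the splitting are contractible at every $p$, and rationally they vanish for $m\geq 2$ (the rationalizations of the filtrations stabilize after the first step), so $F_{m}bu/F_{m-1}bu$ is integrally contractible.

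For the case $m=p^{k}$, I would work $p$-locally and unwind the definitions $T(k)=\Sigma^{-(k+1)}A_{p^{k}}/A_{p^{k}-1}$ and $L(k)=\Sigma^{-k}\Sp^{p^{k}}(\Sphere)/\Sp^{p^{k}-1}(\Sphere)$. Using Proposition~\ref{prop: null map} to replace $T(k)\vee L(k)$ by $\Mbar(k)$, the splitting becomes
\[
F_{p^{k}}bu/F_{p^{k}-1}bu \;\simeq\; \Sigma^{k}\bigl(T(k)\vee L(k)\bigr) \;\simeq\; \Sigma^{k}\Mbar(k),
\]
which by Proposition~\ref{prop: MBar is ho(Partition)} is equivalent to $(|\Lcal_{p^{k}}|^{\diamond})_{\hobased U(p^{k})}$. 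The required connectivity lower bound reduces to the known connectivity of the complex partition complex: $|\Lcal_{n}|$ is $(2n-4)$-connected (cf.~\cite{A-L}), so $|\Lcal_{p^{k}}|^{\diamond}$ is $(2p^{k}-3)$-connected, and taking homotopy orbits preserves this bound via the Serre spectral sequence, giving bottom nontrivial homotopy in dimension at least $2p^{k}-2=2m-2$.

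The main obstacle is the sharpness: to show that $\pi_{2m-2}$ is actually nonzero, not merely $\geq 2m-2$. For this I would appeal to Proposition~\ref{prop: Mbar projective}, which identifies $\Mbar(k)$ as the nontrivial symplectic Steinberg wedge summand of $\Sigma^{\infty}(B\Gamma_{k})_{+}$. The symplectic Steinberg construction selects the top-dimensional homology class of $|\Lcal_{p^{k}}|^{\diamond}$ (which is a wedge of $(2p^{k}-3)$-spheres by the same Arone--Dwyer style calculation cited for the connectivity) as a nonzero $U(p^{k})$-coinvariant, and Hurewicz then detects this as a nonzero element of $\pi_{2p^{k}-2}$ of the homotopy-orbit spectrum. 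Combined with the connectivity bound, this pins the bottom nontrivial homotopy group in exactly dimension $2m-2$.
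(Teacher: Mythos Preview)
Your proposal starts at the right place---the decomposition into the two summands $\Sigma^{-1}A_m/A_{m-1}$ and $\Sp^m(\Sphere)/\Sp^{m-1}(\Sphere)$---but then takes an unnecessary detour that introduces a genuine error. The paper's proof stays with the two summands and finishes directly: by Theorems~9.5 and~9.7 of \cite{A-L}, the summand $\Sigma^{-1}A_m/A_{m-1}$ is $(2m-2)$-connected (hence contributes nothing in dimension $\leq 2m-2$), while Nakaoka's description of the mod~$p$ cohomology of $\Sp^{p^k}(\Sphere)/\Sp^{p^k-1}(\Sphere)$ as admissible length-$k$ monomials in $\Acal/\Acal\beta$ shows that its bottom class lies exactly in dimension $2p^k-2$. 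This single classical input gives both the lower bound and the sharpness simultaneously; no appeal to $\Mbar(k)$ or to the Steinberg idempotent is needed.

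Your repackaging as $\Sigma^k\Mbar(k)\simeq(|\Lcal_{p^k}|^\diamond)_{\hobased U(p^k)}$ then rests on two unsupported claims about $|\Lcal_n|$, one of which is false. The assertion that $|\Lcal_n|$ is $(2n-4)$-connected is not established in \cite{A-L}; the connectivity results there concern $A_m/A_{m-1}\simeq\Sigma^\infty B\Rcal_m^\diamond$, not $|\Lcal_m|$ itself. More seriously, the claim that $|\Lcal_{p^k}|^\diamond$ is a wedge of $(2p^k-3)$-spheres is wrong already for $p^k=2$: the objects of $\Lcal_2$ are the maximal tori of $U(2)$ with no nontrivial inclusions among them, so $|\Lcal_2|\cong U(2)/N(T)\cong\reals P^2$, and $|\Lcal_2|^\diamond\simeq\Sigma\reals P^2$ is a simply-connected mod~$2$ Moore space, not a wedge of circles. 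The wedge-of-spheres picture you invoke belongs to the symmetric-group partition complex $|\Pcal_n|$, not to the unitary decomposition poset $|\Lcal_n|$; your sharpness argument, which depends on selecting a ``top sphere class'', therefore collapses.
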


To begin the work of comparing the modified stable rank filtration of
Definition~\ref{defn: modified stable rank filtration} and the
original stable rank filtration defined by Rognes, we recapitulate
from \cite{Waldhausen}, Section~1.8 some elements of the comparison
between Segal's $\Gamma$-space construction and Waldhausen's
$S_{\bullet}$ construction.  This is because the modified and original
stable rank filtrations are based, respectively, on filtrations of
these constructions.

We reformulate slightly, in two stages. First, an alternate
(isomorphic) construction of the Segal $K$-theory of $\Ccal$ comes
from thinking of $S^{k}$ as the $k$-simplicial set
$S^{1}\wedge\dots\wedge S^{1}$, evaluating the nerve of $\Ccal(-)$
levelwise to obtain a $k$-simplicial space, and then taking the
geometric realization to obtain the $k$-th space in the $K$-theory
spectrum of $\Ccal$. (Note that for $k=1$, a $k$-simplicial set is
just a classical simplicial set, and for $k=2$, it is a bisimplicial
set, or simplicial space.)  Second, for a pointed set $S$, the
category $\Ccal(S)$ is again a category with sums. That is,
it is a category to which Segal's construction may be
applied.  Thus we can iterate by applying Segal's construction with
the simplicial category $\Ccal(S^{1})$ to the simplicial set $S^{1}$
to obtain a bisimplicial category. The following lemma formalizes the
equivalence of these constructions.

\begin{lemma}
The natural map $\Ccal(S\wedge T)\rightarrow \Ccal(S)(T)$
is an equivalence of categories. 
\end{lemma}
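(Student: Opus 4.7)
The plan is to reduce the claim to the standard Segal splitting, which identifies $\Ccal(R)$ with a product of copies of $\Ccal$ indexed by the non-basepoint elements of $R$.

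First I would recall that for any pointed finite set $R$, the evaluation functor
\[
\Ccal(R) \longrightarrow \prod_{r\in R\setminus\{0\}}\Ccal, \qquad (f,\alpha)\longmapsto \bigl(f(\{0,r\})\bigr)_{r},
\]
is an equivalence of categories. This is essentially built into Segal's construction: the compatibility isomorphisms $\alpha$ reconstruct $f(U)$ on each pointed subset $U\subseteq R$ from the values on singleton pointed subsets, while conversely any tuple of objects of $\Ccal$ extends to a Segal-compatible family by choosing any decomposition of $U$ into singletons and using the sum operation.

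Next I would apply this equivalence with $R=S\wedge T$, whose non-basepoint elements are exactly the pairs $(s,t)$ with $s\in S\setminus\{0\}$ and $t\in T\setminus\{0\}$, to obtain $\Ccal(S\wedge T)\simeq \Ccal^{(|S|-1)(|T|-1)}$. Applying the same equivalence twice---first to $\Ccal(-)$ evaluated at $T$, using the pointwise category-with-sums structure on $\Ccal(S)$ (where $(f_1+f_2)(U)=f_1(U)+f_2(U)$ with compatibilities inherited from $\Ccal$), and then to $\Ccal$ itself with $R=S$---gives
\[
\Ccal(S)(T) \simeq \Ccal(S)^{|T|-1} \simeq \bigl(\Ccal^{|S|-1}\bigr)^{|T|-1} = \Ccal^{(|S|-1)(|T|-1)}.
\]

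Finally I would verify that the natural map of the lemma, which sends $(f,\alpha)\in\Ccal(S\wedge T)$ to the object $g\in\Ccal(S)(T)$ defined on pointed subsets $V\subseteq T$ and $U\subseteq S$ by $g(V)(U)=f(U\wedge V)$ (with compatibility morphisms inherited from $\alpha$), corresponds under these two splittings to the identity of $\Ccal^{(|S|-1)(|T|-1)}$. This is immediate from the fact that singleton pointed subsets of $S\wedge T$ are precisely smashes of singleton pointed subsets of $S$ and $T$: for each $(s,t)$ one has $\{0,(s,t)\}=\{0,s\}\wedge\{0,t\}$, so both descriptions assign the same object $f(\{0,s\}\wedge\{0,t\})=g(\{0,t\})(\{0,s\})$ of $\Ccal$ to the $(s,t)$ coordinate. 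Combined with the two equivalences above, this proves the natural map is an equivalence of categories.

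The main obstacle is really only book-keeping: one has to specify carefully how $\Ccal(S)$ inherits its category-with-sums structure so that Segal's construction can be iterated, and check that the natural map is a strict functor respecting this structure (rather than merely being essentially well-defined). Both points follow from the naturality of $\alpha$ and the pointwise definition of sums in $\Ccal(S)$, with no genuine difficulty.
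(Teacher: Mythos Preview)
Your proposal is correct and follows exactly the approach indicated by the paper, which merely records in a parenthetical that both categories are equivalent to the product category $\prod_{S\wedge T}\Ccal$. You have simply supplied the details: the Segal splitting $\Ccal(R)\simeq\prod_{R\setminus\{0\}}\Ccal$, its iterated application to $\Ccal(S)(T)$, and the verification that the natural map matches the identity under these identifications.
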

\noindent 
(The lemma follows from the fact that both categories are equivalent to 
the product category $\prod_{S\wedge T}\Ccal$.)  

\begin{corollary}
The natural map 
$\Ccal\left(S^{1}\wedge S^{1}\right)
     \rightarrow [\Ccal(S^{1})](S^{1})$ 
of bisimplicial sets is an equivalence on geometric realizations. 
\end{corollary}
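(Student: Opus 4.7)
The plan is to reduce this corollary to the preceding lemma by evaluating everything bidegreewise. Both $\Ccal(S^1 \wedge S^1)$ and $[\Ccal(S^1)](S^1)$ are naturally bisimplicial objects in categories: the source in bidegree $(p,q)$ is the category $\Ccal\bigl((S^1)_p \wedge (S^1)_q\bigr)$, since the smash product of simplicial sets is formed levelwise, while the target in bidegree $(p,q)$ is $\Ccal\bigl((S^1)_p\bigr)\bigl((S^1)_q\bigr)$, by unwinding the iterated application of Segal's construction. The natural map in question is, in each bidegree, precisely the comparison map supplied by the lemma.

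By the lemma, this map is therefore an equivalence of categories in every bidegree $(p,q)$. First I would pass to nerves in the categorical direction; since taking nerves sends an equivalence of categories to a weak homotopy equivalence of simplicial sets, this yields a bidegreewise weak equivalence of bisimplicial sets (or, if one prefers, a levelwise weak equivalence of trisimplicial sets once the nerve direction has been incorporated).

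Finally, I would conclude by the standard fact that geometric realization of bisimplicial sets preserves bidegreewise weak equivalences: one can realize in one simplicial direction at a time and use that realization of simplicial spaces preserves levelwise weak equivalences between Reedy cofibrant simplicial spaces, which applies here since nerves of categories are always Reedy cofibrant. This delivers the desired weak equivalence on geometric realizations.

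The argument is essentially formal, and the one step requiring any genuine care is matching up the bookkeeping: since $\Ccal(S)$ for a pointed set $S$ is used to denote both the indicated category and its nerve, one must be consistent about when the nerve direction is folded in. Beyond this, no substantive obstacle arises; the content is carried entirely by the bidegreewise equivalence furnished by the lemma.
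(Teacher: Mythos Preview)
Your proposal is correct and is precisely the argument the paper intends: the corollary is stated without proof, as an immediate consequence of the preceding lemma, and your write-up simply spells out the routine passage from a bidegreewise equivalence of categories to an equivalence on geometric realizations. There is nothing to add.
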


By induction, we conclude that we can construct the $k$-th space in 
the Segal $K$-theory spectrum of $\Ccal$ by iterating Segal's construction
for the category $\Ccal$: we take the $k$-simplicial category with sums
that we have at the $k$-th stage and use it to evaluate Segal's construction 
on the simplicial set $S^{1}$ to obtain a $(k+1)$-simplicial 
category. This is a useful phrasing because Waldhausen
describes his $K$-theory of $\Ccal$ in terms of an iterated construction. 

We summarize Waldhausen's construction next. Recall that if $\Ccal$
is a category with cofibrations and weak equivalences, then 
$wS_\bullet\Ccal$ is a simplicial category whose objects 
are determined by data of the following form, 
where $\rightarrowtail$ denotes a cofibration in $\Ccal$:
\[
(B_1\rightarrowtail\cdots\rightarrowtail B_q, \mbox{ choices of subquotients}).
\]
Morphisms are pointwise weak equivalences between such objects.
See~\cite{Waldhausen} for more details.  If $\Ccal$ is a category with
cofibrations and weak equivalences, then $S_\bullet\Ccal$ is again a
category with cofibrations and weak equivalences, and thus one may
iterate the $S_\bullet$ construction $k$ times to obtain the $k$-simplicial
category $S_\bullet^k\Ccal$. Considering only the morphisms that are weak
equivalences gives us the $k$-simplicial category $wS_\bullet^k\Ccal$:
the objects of the
category $wS_{q_1}\ldots S_{q_k} \Ccal$ are cofibrant $q_1\times
q_2\times \ldots \times q_k$-arrays of objects in $\Ccal$, together
with choices of certain subquotients, and morphisms are objectwise
weak equivalences of such arrays. As with the Segal construction, we
take the sequence $w\Ccal, wS_\bullet\Ccal, \ldots,
wS_{\bullet}^{k}\Ccal, \ldots$ and obtain the prespectrum that is
Waldhausen's $K$-theory of $\Ccal$ by applying nerves levelwise and
then taking the geometric realization of the resulting $k$-simplicial
spaces. This prespectrum is in fact an $\Omega$-spectrum after the
first map.

To compare the modified stable rank filtration to the original stable
rank filtration, we need a map between Segal's construction and
Waldhausen's construction. 
Recall from Section~\ref{section: filtered Gamma spaces} that for
Segal's $\Gamma$-category $S\mapsto\Ccal(S)$, the objects of $\Ccal(S)$
are pairs
$(f,\alpha_{*})$, where $f$ is a function taking pointed subsets of $S$ to 
objects of $\Ccal$, and $\alpha_{*}$ is a collection of compatible isomorphisms 
$\alpha_{S_{1},S_{2}}:
       f(S_{1})\oplus f(S_{2})\xrightarrow{\cong} f(S_{1}\vee S_{2})$. 
If we take the simplicial model for $S^{1}$ that has the set $\q$ in
simplicial dimension $q$, then there is a functor sending the
$q$-simplices $(f,\alpha_{*})$ of $\Ccal(S^{1})$ to $wS_{q}\Ccal$ as
follows. Let $A_{i}$ denote $f(\{i\})$ and let $B_{i}=f(\{1,\dots, i\})$, 
where $\{i\}$ and $\{1,\dots, i\}$ implicitly contain the basepoint.
Then $(f,\alpha_{*})$ provides structure maps
\[
\alpha_{\{1,\dots,i\},\{i+1\}}: B_{i}\oplus A_{i+1} \longrightarrow B_{i+1}
\]
and the object $(f, \alpha_{*})$ in the Segal category is sent to the
object in the $S_{\bullet}$ construction
\[
B_{1} \rightarrowtail B_{2} \rightarrowtail 
       \ldots\rightarrowtail B_{q},
\]
together with the choices of quotient maps given by the inverse of the 
isomorphisms from $\alpha_{*}$ followed by projection: 
$B_{i+1}\xrightarrow{\cong} B_{i}\oplus A_{i+1} \epi A_{i+1}$. 

This simplicial functor generalizes to a $k$-simplicial functor that
for each $k$ takes the $k$-fold iterate of Segal's construction for
$\Ccal$ on $S^{1}$ to the $k$-fold iterate $wS_\bullet^k \Ccal$. Hence
it induces a $k$-simplicial functor
\[
 \Ccal(S^{k}) \longrightarrow wS_\bullet^k \Ccal.
\]
Together these simplicial functors induce a map from Segal's
$K$-theory to Waldhausen's $K$-theory, which is a homotopy equivalence
when cofibrations in $\Ccal^{n}$ are ``splittable up to weak 
equivalence'' (\cite{Waldhausen} Theorem~1.8.1). 
 
The example that is most relevant to us is the category of
free modules over a ring $R$ satisfying the dimension invariance property.
In this case, both constructions give a model for the free $K$-theory
of $R$ that coincides with Quillen's $K$-theory above dimension
zero  (\cite{Waldhausen}, Sections~8 and~9). 
Here we include the case when $R$ is $\reals$ or $\complexes$,
in which case $\Ccal$ is the topologically enriched category of (real
or complex) vector spaces, and both constructions give a model for
connective topological $K$-theory.
 
We need to understand the two stable rank filtrations in these terms.
Recall from Section~\ref{section: filtered Gamma spaces}
that the modified rank filtration of an augmented
$\Gamma$-space $F$ is defined as the pullback of the filtration of
$\Sp^\infty$ by $\{\Sp^m\}_{m=0}^\infty$. On $\Ccal(S^{k})$, this
amounts to filtering by the maximal dimension of the modules appearing
in an object $(f, \alpha_{*})$. Similarly, the original stable rank
filtration is obtained by filtering $wS_\bullet^k\Ccal$ by the maximal
dimension of the modules in the $k$-dimensional array of cofibrations
(see~\cite{Rognes} for more details). It is easy to see that the map
from Segal's spectrum to Waldhausen's spectrum respects the two
filtrations.

Now we are ready to prove 
Propositions~\ref{prop: equivalence of Rognes filtrations} 
and~\ref{prop: Rognes connectivity conjecture}.

\begin{proof}
  [Proof of Proposition~\ref{prop: equivalence of Rognes filtrations}]
  Let $R$ be $\reals$ or $\complexes$. The map of $k$-simplicial
  categories 
\begin{equation}     \label{eq: map of categories}
\Ccal(S^{k})\rightarrow wS_\bullet^k\Ccal
\end{equation}
respects the filtrations (modified rank filtration in the domain, rank
filtration in the codomain).  Therefore it is sufficient to check that it
induces a homotopy equivalence on the filtration quotients.

Both the domain and the codomain are $k$-simplicial groupoids, so they
are equivalent, in each dimension, to a disjoint union of groups, and
the map \eqref{eq: map of categories} is a map of $k$-simplicial
groupoids. The category $\Ccal(S^{k})$ is equivalent, in total
dimension exactly $n$, to the disjoint union of automorphism groups of
certain ordered direct-sum decompositions of $R^n$. Such a group is
equivalent to a group of invertible block diagonal matrices. The
category $wS_\bullet^k\Ccal$ is equivalent, in each dimension, to a
disjoint union of groups of automorphisms of a certain lattice of
subspaces of $R^n$ for various $n$. Such a group is equivalent to a
certain group of invertible block upper triangular matrices.  In these
terms, the map $\Ccal(S^{k})\rightarrow wS_\bullet^k\Ccal$ amounts, on
each connected component of the filtration quotients, to the inclusion
of a group of block diagonal matrices into a corresponding group of
block upper triangular matrices. In the topological case, such an
inclusion is always a homotopy equivalence. Thus the map
$\Ccal(S^{k})\rightarrow wS_\bullet^k\Ccal$ induces a homotopy
equivalence on each connected component of each filtration quotient.
Therefore, the whole map between $K$-theory spectra is a homotopy
equivalence.
 \end{proof}

As a consequence, we may verify Rognes's connectivity conjecture for
complex $K$-theory. Let $F_{m}bu$ be the $m$-th stage in the original stable
rank filtration of $bu$, as defined by Rognes.

 \begin{proof}[Proof of Proposition~\ref{prop: Rognes connectivity conjecture}]
  
By Proposition~\ref{prop: equivalence of Rognes filtrations}, we may
substitute the modified stable rank filtration for the original one,
that is, 
\[
F_{m}bu/F_{m-1}bu\simeq \Rcal_{m}/\Rcal_{m-1}bu. 
\]
We use the homotopy pushout square of spectra in 
Theorem~\ref{thm: pushout theorem} to find the connectivity of
$\Rcal_{m}bu/\Rcal_{m-1}bu$ by considering
$\Sp^{m}(\Sphere)/\Sp^{m-1}(\Sphere)$ and $\Sigma^{-1}A_{m}/A_{m-1}$.  It
follows from Theorems~9.5 and~9.7 of \cite{A-L} that
$\Sigma^{-1}A_m/A_{m-1}$ is $(2m-2)$-connected when $m=p^{k}$
and is contractible if $m\neq p^{k}$. On the other hand, we know that
$\Sp^m(\Sphere)/\Sp^{m-1}(\Sphere)$ is also contractible unless $m=p^k$ for
some prime $p$, and that a basis for the mod $p$ cohomology of 
$\Sp^{p^k}(\Sphere)/\Sp^{p^k-1}(\Sphere)$ is given by admissible words
of length $k$ in $\Acal/\Acal\beta$, where $\Acal$ is the Steenrod
algebra~\cite{Nakaoka}.
The lowest dimension of an admissible sequence of length $k$ 
is $2p^{k}-2$, which completes the proof of the proposition. 
\end{proof}


\subsection*{Acknowledgements}
The first author was partly supported by NSF grant DMS 0605073. 


\end{document}